\documentclass[final,onefignum,onetabnum]{siamart190516}

\usepackage{verbatim}
\usepackage{lipsum}
\usepackage{amsfonts}
\usepackage{graphicx}
\usepackage{subfigure}
\usepackage{epstopdf}
\usepackage{algorithmic}
\ifpdf
  \DeclareGraphicsExtensions{.eps,.pdf,.png,.jpg}
\else
  \DeclareGraphicsExtensions{.eps}
\fi

\theoremstyle{plain}

\newtheorem{remark}{\textbf{Remark}}[section]

\textheight=21.6cm
\textwidth=15.4cm
\setlength{\oddsidemargin}{0.9cm}
\setlength{\evensidemargin}{0.9cm}

\newcommand{\eps}{\epsilon}

\newcommand{\bm}{\boldsymbol}

\newcommand{\Grad}[1]{\nabla #1}


\newcommand{\be}{\begin{equation}}
\newcommand{\ee}{\end{equation}}

\newcommand{\bse}{\begin{subequations}}
\newcommand{\ese}{\end{subequations}}
\def\benl{\begin{eqnarray*}}
\def\eenl{\end{eqnarray*}}




\def\be{\bm{e}}

\def\bx{\bm{x}}

\def\bmu1{\bm{\mu_1}}


\newcommand{\ben}{\begin{eqnarray}}
\newcommand{\een}{\end{eqnarray}}
\newcommand{\beq}{\begin{equation}}
\newcommand{\eeq}{\end{equation}}
\newcommand{\bea}{\begin{array}}
\newcommand{\eea}{\end{array}}
\newcommand{\bef}{\begin{figure}[H]}
\newcommand{\eef}{\end{figure}}


\newsiamremark{hypothesis}{Hypothesis}
\crefname{hypothesis}{Hypothesis}{Hypotheses}
\newsiamthm{claim}{Claim}


\title{Modeling and simulation of nuclear architecture  reorganization  process using a phase field  approach\thanks{ The work  is partially supported by NSF Grant DMS-1720442 and AFOSR Grant FA9550-16-1-0102.}}

\author{Qing Cheng\thanks{Department of  Mathematics,Purdue University, West Lafayette, IN 47907, USA  (cheng573$@$purdue.edu, shen7$@$purdue.edu).}\and Pourya Delafrouz\thanks{Department of Bioengineering, University of Illinois at Chicago, SEO, MC-063, Chicago, IL, 60607-7052, USA (pdelaf2$@$uic.edu, jliang$@$uic.edu).}
\and Jie Liang${}^\ddag$\and Chun Liu\thanks{Department of Applied Mathematics, Illinois Institute of Technology, Chicago, IL 60616, USA (cliu124$@$iit.edu).} \and  Jie Shen${}^\dag$.}


\usepackage{amsopn}

%


\externaldocument{ex_supplement}


\begin{document}
\bibliographystyle{plain}
\graphicspath{ {Figures/} }
\maketitle

\begin{abstract}
 We develop a  special phase field/diffusive interface method to  model the nuclear architecture reorganization process. In particular, we use  a
 Lagrange multiplier approach in the phase field model  to preserve  the specific physical and geometrical constraints  for the biological events.  We develop several
 efficient and robust  linear and weakly nonlinear schemes  for this  new  model. 
 To validate the model and numerical methods, we present ample numerical simulations which in particular  reproduce several processes of nuclear architecture reorganization from the experiment literature.
\end{abstract}

\section{Introduction}
Genetic information is transmitted through genome and exhibits a hierarchical structure in the nucleus. The cell nucleus and the genome are  organized into spatially separated sub-compartments \cite{erdel2018formation}. 
 For eukaryotes, genetic information is stored in  DNA which  is also integrated into  chromosomes of a cell nucleus \cite{erdel2018formation, lee2017new,solovei2013lbr,cremer2006chromosome}. 
 In chromosome territory, chromatin fibers are formed when DNA molecule is wrapped around the histones, and form transcriptionally inactivated and condensed region form as heterochromatin and transcriptionally activated region form euchromatin.  
 The position, structure and configuration of heterochromatin and euchromatin regions are  closely related to the gene expression.
For instance, it has been observed that the volume of cell nucleus is a main determinant of the overall landscape of chromatin folding~\cite{gursoy2014spatial,gursoy2017spatial,gursoy2017computational}.
Different kinds of nuclear architecture can  attribute to different distribution and configuration of heterochromatin and euchromatin regions \cite{strom2017phase,cremer2001chromosome},
with additional protein-mediated specific interactions among genomic elements~\cite{perez2020chromatix,sun2021high}.
Moreover, the  inverted  architecture occurs  where heterochromatin is located at the center of the nucleus and euchromatin is enriched at the periphery. 
In contrast,  when the heterochromatin is enriched at the nuclear periphery and around nucleoli, this is referred as the  conventional architecture.

In   \cite{solovei2009nuclear,solovei2013lbr},  Solovei  et al.  demonstrated that different types of nuclear architecture were 
associated with different mammalian lifestyles, such as  diurnal versus nocturnal. The inverted nuclear  architecture can be transformed from the convention architecture in 
mouse retina rod cells \cite{solovei2009nuclear,solovei2013lbr}. The reorganization process is accompanied by the relocation of chromosomes from positions  enriched at nuclear periphery, 
and the recreation of a single heterochromatin cluster into the inverted architecture. The difference of nuclear structure is partially attributed to the activity of lamin B,  lamin A and envelope proteins \cite{paulsen2017chrom3d,solovei2009nuclear,solovei2013lbr,kinney2018chromosome,van2017lamina}.  Moreover, the rate of conversion of heterochromatin to euchromatin can  also be controlled by volume constraints of the nucleus.

Recently several mathematical models using the phase field approach  \cite{laghmach2020mesoscale,lee2017new,seirin2017role} have been introduced to study the 
mechanism of generation  of different nuclear architectures, including the size  and shape of the nucleus,  the rate of conversion of heterochromatin to euchromatin.  
These models usually include the minimization of total energy with various relevant geometric constraints.  
A common method to preserve such constraints is through   extra penalty terms introduced in the energy functionals
in models \cite{laghmach2020mesoscale,lee2017new}.  The drawback of such methods is the presence of   the large penalty parameters which  results in a stiff nuclear 
architecture reorganization systems, leading to significant challenge in the simulation and analysis. This is especially true in situations when the volume of chromosome must be enforced  
during the reduction of nuclear size and reorganization  of nuclear architectures.

The Lagrange multiplier approach is commonly used  for constrained   gradient dynamic systems \cite{du2006simulating,du2009energetic,du2008numerical,MR4049377,
yang2017efficient,yang2021numerical,wang2016efficient,cheng2018multiple}. 
In this paper, we introduce a new Lagrange multiplier approach \cite{cheng2020new,cheng2019global,sun2020structure} to enforce the geometric constraints such as the  volume constraints for both chromosome and heterochromatin.  
When the volumes of  each chromosome and heterochromatin are preserved as constants,   the  reaction-diffusion system with the Lagrange multipliers  leads to a constrained 
 gradient flow dynamics  which  satisfies an energy dissipation law.  
 We  develop several numerical schemes for nuclear architecture system with Lagrange multipliers. 
One is  a weakly nonlinear scheme which  preserves the volume constraints but requires solving a set of $2\times 2$ nonlinear algebraic systems for the Lagrange multipliers, the second is a purely linear scheme 
which  approximates the volume constraints to second order and only requires solving linear systems with constant coefficient.  These two schemes, while being numerically efficient, do not satisfy a discrete energy law. Hence, we construct the third scheme 
which  is also weakly nonlinear but is unconditionally energy stable. However, this scheme requires solving 
a nonlinear algebraic system of $2N+1$ (where $N$ is the number of chromosomes in the nucleus) equations which may require smaller time steps to be well posed. One can choose to use one of these schemes in different scenarios. In our numerical simulations, we use the first scheme to control the volumes to targeted values exactly, then we switch to the second scheme which is more efficient. We can use the third scheme if we want to make sure that the scheme is energy dissipative.

For validation purpose, we present several simulations results which are  consistent with those observed in the  experiments.
We also demonstrate that our model and schemes are efficient and robust for investigating various nuclear architecture reorganization processes.   

The paper is organized as follows: in Section 2, we present our phase field  model for nuclear architecture reorganization by using a Lagrange multiplier approach. 
We chose suitable energy functionals to capture the most important interactions and constraints of various biological elements, and introduce Lagrange multipliers to  
 capture the specific geometric constraints for the biological process.
In Section 3, we develop  efficient linear and weakly nonlinear time discretization schemes for the phase field model developed earlier. We present  numerical results using the proposed schemes  in Section 4, and
compare them  with the existing experimental literature and previous works. Finally we conclude the paper
with more discussions of our methods and results in Section 5.

\section{A phase field model for nuclear architecture reorganization (NAR)}
\label{sec:main}
To study the nuclear architecture reorganization (NAR) process, we employ a phase field/diffusive interface method. To start with this approach,
the total nucleus is defined by a phase (labeling) function $\phi_0$ such that $\phi_0=0$ and $\phi_0=1$ respectively for the interior and exterior regions of the nucleus (Fig.\;\ref{ref}).  For a
ellipsoid shape domain, we can define
\begin{equation}
\phi_0(x,y,t)=\frac 12(1-\tanh(\frac{\sqrt{\frac{x^2}{r^2_x(t)}+\frac{y^2}{r^2_y(t)}}-1}{\sqrt{2}\eps})),\label{nuclear:3}
\end{equation}
where $\eps$ is the interfacial (transitional domain)  width,  ${r}_x(t)$ and ${r}_y(t)$ describe  the ellipse shape of the nucleus.
Similarly,  we will introduce phase functions    $\psi$  to describe the  heterochromatin region and $\bm \phi=(\phi_1,\phi_2,\cdots,\phi_N)$ to describe each individual chromosome region, 
where $N$ represents the total number of chromosomes in the nucleus ( Fig.\;\ref{ref}).  
In particular, we will choose   $N=8$ chromosomes for drosophila and $N=46$ chromosomes for human.  In addition to the heterochromatin region, the rest of chromosome region is the euchromatin region.
These are the order parameter/phase field functions that will be used to  determined the final nuclear architecture.

For the general phase field approaches, the configuration and distribution of various regions are the consequence of minimizing a specific energy functional
in terms of the above phase field functions, which takes into all considerations of the coupling and competition between different domains, as well as the relevant geometry constraints.

In the  NAR models \cite{lee2017new,laghmach2020mesoscale},  the free energy  for chromosome and  heterochromatin  had been chosen to include							
\begin{equation}\label{orienergy}
 E_0(\bm \phi,\psi)=\sum\limits_{m=1}^N\int_\Omega \frac{\eps_{\phi}^2}{2}|\Grad\phi_m|^2 +g(\phi_m) d\bx+\int_{\Omega} \frac{\eps_{\psi}^2}{2}|\Grad\psi|^2 +g(\psi)d\bx,
\end{equation}
where   $\eps_{\phi}$
and $\eps_{\psi}$ measure the interfacial thickness corresponding to $\phi$ and $\psi$, $\Omega$ is the computational domain, and  $g(\phi)=\frac 14\phi^2(1-\phi)^2$ is the   double well potential which possess the local minima at $\phi=0$ and $\phi=1$ (see \cite{provatas2011phase}).
This part of free energy represents the competition and coupling between various chromosome and heterochromatin regions.

Next we will consider the following geometric constraints for all these regions that are biologically relevant to our application  \cite{alberts2015essential,cremer2010chromosome}:
\begin{enumerate}
\item All chromosomes are restricted  within the cell nucleus region;
\item Heterochromosome of each chromosome stays within the chromosome;
\item Between all chromosomes, due to the excluded volume effects, do not self-cross or cross each other.
\end{enumerate}
These  constraints could be incorporated into the model by introducing  three extra terms in the free energy:
\begin{equation}
\begin{split}
 E_1(\bm \phi,\psi)&=\underbrace{\beta_0\sum\limits_{m=1}^N\int_\Omega  h(\phi_0)h(\phi_m) d\bx}_1+\underbrace{\beta_{\psi}\int_{\Omega}  [1-\sum\limits_{m=1}^Nh(\phi_m)]h(\psi)  d\bx}_2
   \\&+ \underbrace{\beta_{\phi}\sum\limits_{m\neq n}\int_\Omega  h(\phi_n)h(\phi_m) d\bx}_3,
\end{split}
\end{equation}
where $\beta_0,\beta_{\psi}$ and $\beta_{\phi}$ are three positive constants which  indicate the intensities of domain territories, and  $h(\phi)$ is used for the induction of driving  interface between $\phi=0$ and $\phi=1$ while keeping the local minima $0$ and $1$ fixed during dynamic process. The required conditions for $h(\phi)$ are
\begin{equation}\label{six}
h(0)=0,\;h(1)=1,\;h'(0)=h'(1)=h''(0)=h''(1)=0.
\end{equation}
The  lowest degree polynomials satisfying the above conditions is  $h(\phi)=\phi^3(10-15\phi+6\phi^2)$, see Fig.\;\ref{hphi}.  
\begin{figure}[htbp]
\centering
\includegraphics[width=0.5\textwidth,clip==]{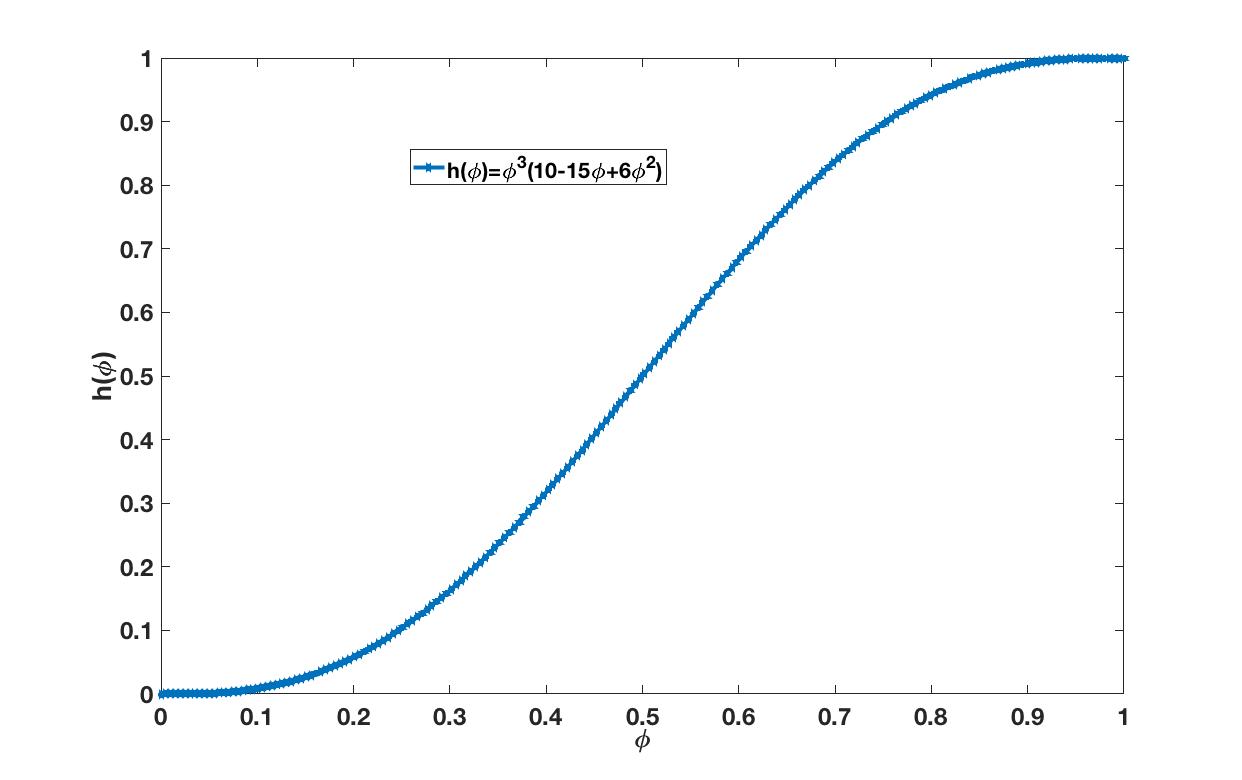}
\caption{The function $h(\phi)=\phi^3(10-15\phi+6\phi^2)$.}\label{hphi}
\end{figure}

Due to the expression of  LBR and lamin A/C in the nuclear envelope,  interactions between heterochromatin and the nuclear envelope play an important role in nuclear architecture reorganization process.    
For this purpose,  another term was introduced in the  free energy  to describe the  interactions  \cite{solovei2013lbr}:
\begin{equation}
E_2(\phi_0,\psi)=\gamma\int_{\Omega} \Grad h(\phi_0)\cdot \Grad h(\psi)d\bx,
\end{equation}
 where $\gamma$ is the affinity constant, and  $\gamma >0$ implies  the heterochromatin will locate at the nuclear periphery, while $\gamma =0$ leads to the lack of heterochromatin and nuclear envelope interactions due to the absence of LBR or lamin A/C.  More precisely,  $E_2$  represents the intensity of affinity between nuclear function $\phi_0$ and heterochromatin region function $\psi$.

\begin{figure}[htbp]
\centering
\includegraphics[width=0.8\textwidth,clip==]{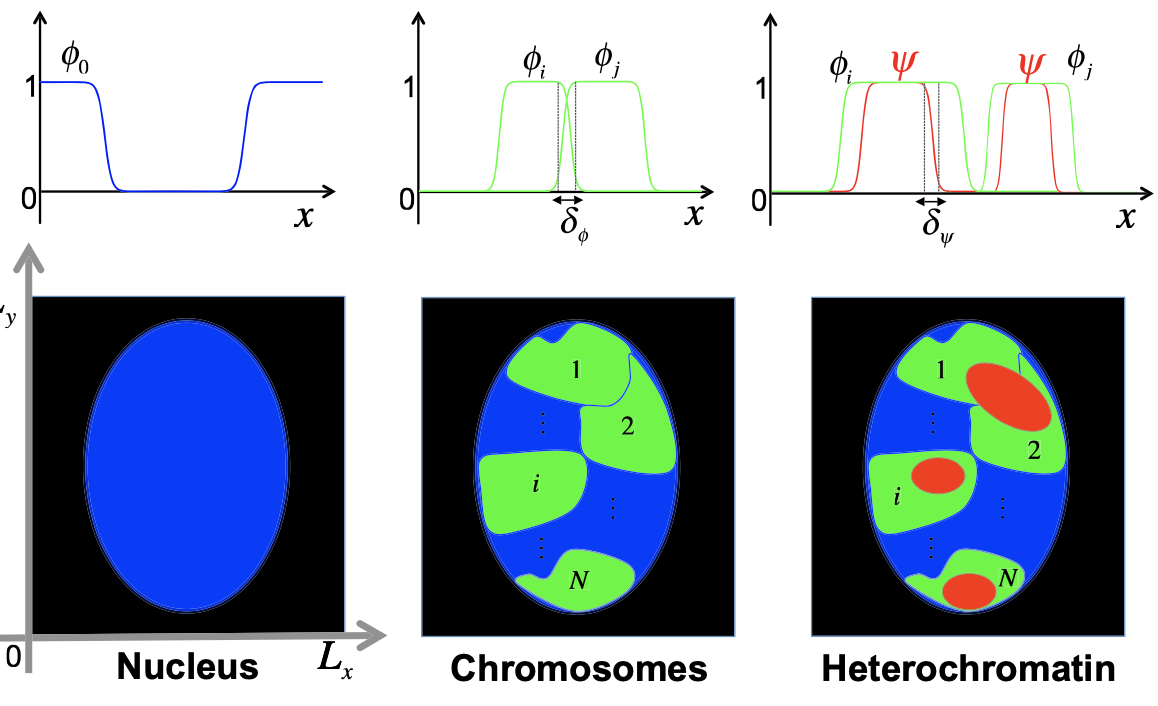}
\caption{Phase functions $\phi_m$ and $\psi$ on 1D and domain diagram on 2D. Blue region is the nuclear domain formulated by  $\phi_0$. Green regions are chromosome territories for $m=1,2,\cdots,N$ defined by $\phi_m$. Red region is the heterochromatin domain defined by $\psi$.  }\label{ref}
\end{figure}

\subsection{NAR model with Lagrange multipliers}
In the general nuclear reorganization process, often one need to take into account more geometric constraints.
In particular,  we shall include the following constraints in our model:
\begin{enumerate}
\setcounter{enumi}{3}
\item   The nuclear space is fully occupied by chromosomes;
\item   Heterochromatin is converted from/to euchromatin within each chromosome.
\end{enumerate}
Notice that our approach could be extended to more general situations, especially related to those of item 4.

Our approach here is to introduce  Lagrange multipliers to guarantee the  constraints $4,5$ in the following NAR model with the total free energy 
\begin{equation}\label{totalenergy}
E(\phi_0,\bm \phi,\psi)=E_0+E_1+E_2.
\end{equation}
 The corresponding Allen-Cahn type gradient flow \cite{liu2003phase,cheng2020new,feng2003numerical,chen1998applications,
 guan2014second,guan2017convergence,yang2017efficient}  with respect to the above energy and the constraints  4 and 5   take the form:
\begin{eqnarray}
&&\frac{\partial \phi_m}{\partial t}=-M\Big\{\frac{\delta E}{\delta \phi_m} -\lambda_m h'(\phi_m)-\eta_m h'(\phi_m)h(\psi)\Big\}, \quad m=1,2,\cdots,N,\label{nuclear:1}\\
&&\frac{\partial \psi}{\partial t}=-M\Big\{\frac{\delta E}{\delta \psi}-\sum\limits_{m=1}^N\eta_mh(\phi_m)h'(\psi)\Big\},\label{nuclear:2}\\
&&V_m(t)=\int_{\Omega}h(\phi_m)d\bx,\qquad v_m(t)=\int_{\Omega}h(\phi_m)h(\psi)d\bx,  \quad m=1,2,\cdots,N, \label{nuclear:4}\\
&&\sum\limits_{m=1}^NV_m(t)=\int_{\Omega}h(\phi_0) d\bx,\label{nuclear:5}
\end{eqnarray}
where $M$ is mobility constant, $V_m(t)$ and $v_m(t)$  $(m=1,2,\cdots,N)$  represent, respectively.  From \eqref{nuclear:4},  the  volume of chromosome can be contracted or expanded to a given volume by using our model. The volumes of  $m$-th chromosome and heterochromatin in the $m$-th chromosome at time $t$ during  nuclear reorganization (growth or inversion) stage are enforced by the Lagrange multipliers  $\lambda_m(t)$, $\eta_m(t)$ $(m=1,2,\cdots,N)$  \eqref{nuclear:4}.
The boundary conditions can be  one of the following two types
\begin{eqnarray}\label{bc:1}
&&(i)\mbox{ periodic; or } (ii)\,\,\partial_{\bf n} \phi_m|_{\partial\Omega}=\partial_{\bf n} \psi|_{\partial\Omega}=0,
\end{eqnarray}
where $\bf n$ is the unit outward normal on the boundary $\partial\Omega$.

Let $\bar{V}_m$ and $\bar{v}_m$ be, respectively, the target volumes for each chromosome  and  heterochromatin in each chromosome,  $\rho_m(t)= v_m(t)/V_m(t)$ can be interpreted as the heterochromatin  conversion rate during nuclear architecture reorganization process. We assume that they will reach the target values at time $t=t_0$ and then stay there according to:
 \begin{equation}\label{rate2}
 \begin{split}
  &V_{m}(t) =\begin{cases}V_m(0) +\frac{(\bar{V}_m+\delta_{1m})t}{t + \alpha_1 e^{-\alpha_2 t}},& 0\le t\le t_0\\
  \bar{V}_m,& t\ge t_0\end{cases};\\
&   v_{m}(t) =\begin{cases} v_m(0) +\frac{(\bar{v}_m+\delta_{2m})t}{t + \alpha_1 e^{-\alpha_2 t}},& 0\le t\le t_0\\
  \bar{v}_m,& t\ge t_0\end{cases},
  \end{split}
  \end{equation}
where $\delta_{1m}$ and  $\delta_{2m}$ are determined by $V_{m}(t_0)=\bar V_m$ and  $v_{m}(t_0)=\bar v_m$.
 and $\alpha_i$  $(i=1,2)$ are suitable positive constants related to the time scale. 
 Similarly, we assume that $r_x(t)$ and $r_y(t)$  evolve according to
  \begin{equation}\label{rate3}
 \begin{split}
  &r_{x}(t) =\begin{cases}r_x(0) +\frac{(\bar{r}_x+\delta_3)t}{t + \alpha_1 e^{-\alpha_2 t}},& 0\le t\le t_0\\
  \bar{V}_m,& t\ge t_0\end{cases};\\
&   r_{y}(t) =\begin{cases} r_y(0) +\frac{(\bar{r}_y+\delta_4)t}{t + \alpha_1 e^{-\alpha_2 t}},& 0\le t\le t_0\\
  \bar{v}_m,& t\ge t_0\end{cases},
  \end{split}
  \end{equation}
where $\delta_3$ and  $\delta_4$ are determined by $r_{x}(t_0)=\bar r_x$ and  $r_{y}(t_0)=\bar r_y$
with $\bar r_x$ and $\bar r_y$ being the targeted axis lengths for the  ellipse enclosing the nucleus.

\begin{remark}
In \cite{lee2017new}, a penalty approach is introduced to satisfy the  physical  constraints 4 and 5 by adding the following to the free energy:
\begin{equation}
\begin{split}
E_2(\bm \phi,\psi)&=\underbrace{\alpha_0[\int_\Omega [ 1-h(\phi_0)] dx-\sum\limits_{m=1}^NV_m(t) ]^2}_4 + 
\underbrace{\alpha_V\sum\limits_{m=1}^N[V_m(t)-\bar{V}_m(t)]^2}_5\\&+
\underbrace{\alpha_v\sum\limits_{m=1}^N[v_m(t)-\bar{v}_m(t)]^2}_{6},
\end{split}
\end{equation}
where   $\alpha_0,\alpha_V,\alpha_v$ are three  positive penalty parameters.  The volume of $m$-th chromosome $V_m(t)$ and volume of heterochromatin in $m$-th chromosome are defined in \eqref{nuclear:4}.
A disadvantage of the penalty approach  is large penalty parameters   are needed for  accurate approximation of the physical constraints, and may lead  to very stiff systems that are difficult to solve numerically.  The  Lagrange multiplier approach that we use here can  enforce these non-local constraints exactly and is free of penalty parameters. Furthermore, the  NAR model \eqref{nuclear:1}-\eqref{nuclear:5}  based on the Lagrange multiplier approach can control  exactly the growth rate of  volume for different compartments  during the nuclear reorganization (growth or inversion).
\end{remark} 
 
\begin{remark}
In the phase field approaches, there are many ways to represent the volume of each individual domain.
For instance, the volume of each chromosome denoted by  $V_m(t)$ and their corresponding heterochromatin 
domain volume $v_m(t)$ could be  computed  by the integrals  $\int_{\Omega}\phi_m d\bx$ and $\int_{\Omega}\psi d\bx$. 
However this representation may have disadvantages in the minimizing procedure, especially for the penalty methods used in \cite{laghmach2020mesoscale,lee2017new,seirin2017role}, due to
its linearity with respect to the phase functions.
One way to overcome this is to use the polynomial function $h(\phi)=\phi^3(10-15\phi+6\phi^2)$ (see Fig.\;\ref{hphi})  for the computation
of  the volumes for different chromosome regions.
Since $h(\phi)$ is an increasing function with respect to $\phi$ in the interval $[0,1]$ with  $h(0)=0$ and $h(1)=1$. 
One can adapt $V_m=\int_{\Omega}h(\phi)d\bx$ and $\int_{\Omega}h(\phi_m)h(\psi) d\bx$ for the volume of $m$-th chromosome and heterochromatin in $m$-th chromosome.
\end{remark}


Let $(\cdot,\cdot)$ be the inner product in $L^2(\Omega)$, and $\|\cdot\|$ be the associated norm in $L^2(\Omega)$. 
 The constrained NAR model  \eqref{nuclear:1}-\eqref{nuclear:5} with \eqref{rate2}  can be interpreted as   a $L^2$ gradient system  which implies phase separations will happen  for $t\ge t_0$. 

\begin{theorem}
The constrained NAR  model  \eqref{nuclear:1}-\eqref{nuclear:5}  with \eqref{rate2} satisfies  the following energy dissipation law 
\begin{equation}\label{law:1}
 \frac{d }{d t} E(\phi_0,\bm \phi,\psi)=-\frac{1}{M}(\sum\limits_{m=1}^N\|\partial_t\phi_m\|^2+\|\partial_t \psi\|^2), \quad\forall t\ge t_0.
\end{equation}
\end{theorem}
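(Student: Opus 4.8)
The plan is to differentiate the total free energy $E=E_0+E_1+E_2$ along the flow \eqref{nuclear:1}--\eqref{nuclear:5} and to show that, once $t\ge t_0$, every term carrying a Lagrange multiplier drops out, leaving exactly the negative-definite dissipation term in \eqref{law:1}. First I would record what the prescribed rate laws \eqref{rate2}--\eqref{rate3} give for $t\ge t_0$: the axis lengths $r_x(t),r_y(t)$ are then constant, so the nuclear labeling function $\phi_0$ in \eqref{nuclear:3} is time-independent and $\partial_t\phi_0\equiv0$; moreover the targets have been reached, so $V_m(t)\equiv\bar V_m$ and $v_m(t)\equiv\bar v_m$ are constant for $m=1,\dots,N$. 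Applying the chain rule and integrating by parts in the gradient terms of $E_0$ and $E_2$ (the boundary contributions vanish under either option in \eqref{bc:1}, using also that $\phi_0$ is essentially constant, hence has vanishing normal derivative, near $\partial\Omega$), one obtains for $t\ge t_0$
\[
\frac{d}{dt}E(\phi_0,\bm\phi,\psi)=\sum_{m=1}^N\Big(\frac{\delta E}{\delta\phi_m},\partial_t\phi_m\Big)+\Big(\frac{\delta E}{\delta\psi},\partial_t\psi\Big),
\]
the $\phi_0$-pairing being absent since $\partial_t\phi_0\equiv0$.

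Next I would eliminate the variational derivatives using the gradient-flow equations. Writing \eqref{nuclear:1}--\eqref{nuclear:2} in the form $\frac{\delta E}{\delta\phi_m}=-\frac{1}{M}\partial_t\phi_m+\lambda_m h'(\phi_m)+\eta_m h'(\phi_m)h(\psi)$ and $\frac{\delta E}{\delta\psi}=-\frac{1}{M}\partial_t\psi+\sum_{m=1}^N\eta_m h(\phi_m)h'(\psi)$ and substituting, the diagonal pairings yield $-\frac{1}{M}\big(\sum_{m=1}^N\|\partial_t\phi_m\|^2+\|\partial_t\psi\|^2\big)$, while the multiplier contributions collect into $\sum_{m=1}^N\lambda_m\,(h'(\phi_m),\partial_t\phi_m)$ together with $\sum_{m=1}^N\eta_m\big[(h'(\phi_m)h(\psi),\partial_t\phi_m)+(h(\phi_m)h'(\psi),\partial_t\psi)\big]$.

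The crux is that these last two sums are precisely $\sum_{m=1}^N\lambda_m\dot V_m(t)$ and $\sum_{m=1}^N\eta_m\dot v_m(t)$: differentiating the constraint identities \eqref{nuclear:4} gives $\dot V_m(t)=(h'(\phi_m),\partial_t\phi_m)$ and $\dot v_m(t)=(h'(\phi_m)h(\psi),\partial_t\phi_m)+(h(\phi_m)h'(\psi),\partial_t\psi)$, and since $V_m$ and $v_m$ are frozen for $t\ge t_0$ both vanish, which gives \eqref{law:1}. I do not anticipate a genuine obstacle, as the argument is essentially an algebraic identity rooted in the structure of \eqref{nuclear:1}--\eqref{nuclear:5}; the points needing care are (i) justifying $\partial_t\phi_0\equiv0$ and the vanishing of the boundary terms so that the chain-rule formula for $dE/dt$ is exact, (ii) stressing that the identity holds only for $t\ge t_0$, since for $t<t_0$ the nonzero rates $\dot V_m,\dot v_m$ and $\partial_t\phi_0$ feed energy into the system through exactly these pairings, and (iii) the implicit assumption that the Lagrange multipliers $\lambda_m(t),\eta_m(t)$ are well defined and finite along the trajectory, which is a separate solvability question for the coupled system.
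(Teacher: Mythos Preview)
Your proposal is correct and follows essentially the same route as the paper: both arguments pair the flow equations \eqref{nuclear:1}--\eqref{nuclear:2} with the time derivatives $\partial_t\phi_m,\partial_t\psi$, invoke the chain-rule identity $\sum_m(\delta E/\delta\phi_m,\partial_t\phi_m)+(\delta E/\delta\psi,\partial_t\psi)=dE/dt$, and observe that the multiplier pairings are exactly $\dot V_m$ and $\dot v_m$, which vanish for $t\ge t_0$. The only organizational difference is that the paper starts from the equations and sums to recover $dE/dt$, whereas you start from $dE/dt$ and substitute the equations; your extra remarks on $\partial_t\phi_0\equiv0$ and the boundary terms in fact fill in points the paper leaves implicit.
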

\begin{proof}
 
Taking the inner product of  \eqref{nuclear:1} with $\partial_t \phi_m$, we obtain
\begin{equation}\label{law:eq1}
\begin{split}
-\frac{1}{M}\|\frac{\partial \phi_m}{\partial t}\|^2&=(\frac{\delta E}{\delta \phi_m},\partial_t\phi_m)-\lambda_m( h'(\phi_m),\partial_t\phi_m)\\&-\eta_m (h'(\phi_m)h(\psi),\partial_t\phi_m).
\end{split}
\end{equation}
Taking the inner product of  \eqref{nuclear:2} with $\partial_t \psi$, we obtain
\begin{equation}\label{law:eq2}
\begin{split}
-\frac{1}{M}\|\frac{\partial \psi}{\partial t}\|^2=(\frac{\delta E}{\delta \psi},\partial_t\psi)-\sum\limits_{m=1}^N\eta_m(h(\phi_m)h'(\psi),\partial_t\psi).
\end{split}
\end{equation}
We derive  from \eqref{nuclear:4} that
\begin{equation}\label{law:eq3}
(h'(\phi_m), \partial_t\phi_m)=\frac{d}{dt}\int_{\Omega} h(\phi_m)d\bx=\frac{d}{dt}\bar{V}_m=0.
\end{equation}
Similarly, we obtain
\begin{equation}\label{law:eq4}
(h'(\phi_m)h(\psi),\partial_t\phi_m)+(h(\phi_m)h'(\psi),\partial_t\psi)
=\frac{d}{dt}\int_{\Omega}h(\phi_m)h(\psi)d\bx=\frac{d}{dt}\bar{v}_m=0.
\end{equation}
Summing up  \eqref{law:eq1} for $m=1,2,\cdots,N$,  
combing it with \eqref{law:eq2}-\eqref{law:eq4} and using equality
\begin{equation}
\sum\limits_{m=1}^N(\frac{\delta E}{\delta \phi_m},\partial_t\phi_m) + (\frac{\delta E}{\delta \psi},\partial_t\psi)=\frac{d }{d t} E(\phi_0,\bm\phi,\psi),
\end{equation} 
we obtain the desired  energy dissipation law \eqref{law:1}.
\end{proof}

\section{Numerical Schemes}
In this section,  we construct several efficient time discretization schemes based on the  Lagrange multiplier approach \cite{cheng2020new,cheng2019global} for the phase field NAR model \eqref{nuclear:1}-\eqref{nuclear:5}.  For the sake of simplicity,   for any function $f$,   we denote  $f^{n,\dagger}=2f^n-f^{n-1}$, $f^{n,\star}=\frac 32f^n-\frac 12f^{n-1}$and $f^{n+\frac 12}=\frac{f^{n+1}+f^n}{2}$.

We split the total energy into a quadratic part and the remaining part  as follows:
$$E(\phi_0,\bm\phi,\psi)=\big(\sum\limits_{m=1}^N\int_\Omega \frac{\eps_{\phi}^2}{2}|\Grad\phi_m|^2  d\bx+\int_{\Omega} \frac{\eps_{\psi}^2}{2}|\Grad\psi|^2 d\bx\big) +\tilde{E}(\phi_0,\bm\phi,\psi),$$ where $\tilde{E}$ is 
\begin{equation}
\begin{split}
\tilde{E}(\phi_0,\bm\phi,\psi)=&\beta_0\sum\limits_{m=1}^N\int_\Omega  h(\phi_0)h(\phi_m) d\bx+\beta_{\psi}\int_{\Omega}  [1-\sum\limits_{m=1}^Nh(\phi_m)]h(\psi)  d\bx
   \\&+ \beta_{\phi}\sum\limits_{m\neq n}\int_\Omega  h(\phi_n)h(\phi_m) d\bx + 
   \sum\limits_{m=1}^N\int_\Omega g(\phi_m)d\bx + \int_{\Omega} g(\psi)d\bx\\&
   +  \gamma\int_{\Omega} \Grad h(\phi_0)\cdot \Grad h(\psi)d\bx.
\end{split}
\end{equation}
Once the volume of  nucleus $\int_{\Omega}h(\phi_0) d\bx$  is given,  volumes of each chromosome territory can be set up  accordingly so that the constraint \eqref{nuclear:5} can be satisfied automatically. 
\subsection{A weakly nonlinear volume preserving scheme}

Note that in the first stage, we need to increase the volumes $V_m(t)$ and $v_m(t)$ to the targeted values $\bar V_m$ and $\bar v_m$ according to \eqref{rate2}, respectively. Hence, we shall first construct below a volume preserving scheme which allows us to achieve this goal. More precisely,  
 a second order scheme based on the Lagrange multiplier approach is as follows: 
 \begin{align}
\frac{ \phi^{n+1}_m-\phi_m^n}{\delta t}&=-M(-\eps^2_\phi\Delta \phi^{n+\frac 12}_m\nonumber\\
&+(\frac{\delta \tilde{E}}{\delta \phi_m})^{n,\star}
 -\lambda^{n+\frac 12}_m h'(\phi^{n,\star}_m)-\eta^{n+\frac 12}_m h'(\phi^{n,\star}_m)h(\psi^{n,\star})),\;m=1,\cdots,N,\label{scheme:1}\\
\frac{ \psi^{n+1}-\psi^n}{\delta  t}&=-M(-\eps^2_\psi\Delta \psi^{n+\frac 12} +(\frac{\delta \tilde{E}}{\delta \psi})^{n,\star} -\sum\limits_{m=1}^N\eta_m^{n+\frac 12}h(\phi^{n,\star}_m)h'(\psi^{n,\star})),\label{scheme:3}\\
V_m(t^{n+1})&=\int_{\Omega}h(\phi^{n+1}_m)d\bx,\;m=1,\cdots,N,\label{scheme:4}\\
 v_m(t^{n+1})&=\int_{\Omega}h(\phi^{n+1}_m)h(\psi^{n+1})d\bx,\;m=1,\cdots,N.\label{scheme:5}
\end{align}
Below we show how to efficiently  solve the coupled scheme  \eqref{scheme:1}-\eqref{scheme:5}. 
Writing 
\begin{equation}\label{sol:update:1}
\begin{split}
&\phi_m^{n+1} = \phi_{1,m}^{n+1} + \lambda_m^{n+\frac 12}\phi_{2,m}^{n+1} +\eta_m^{n+\frac 12}\phi_{3,m}^{n+1},
\end{split}
\end{equation}
in  \eqref{scheme:1}, collecting all terms without  $(\lambda_m^{n+1},\eta_m^{n+1})$, with $\lambda_m^{n+1}$ or with $\eta_m^{n+1}$, we find that, for $m=1,2,\cdots,N$, 
 $(\phi_{i,m}^{n+1}, \,i=1,2,3)$ can be determined from the following decoupled linear systems:
\begin{equation}\label{part:1}
\begin{split}
&\frac{ \phi^{n+1}_{1,m}-\phi_m^n}{\delta t}=-M(-\eps^2_\phi\Delta \phi^{n+\frac 12}_{1,m}+(\frac{\delta \tilde{E}}{\delta \phi_m})^{n,\star});
\end{split}
\end{equation}
\begin{equation}\label{part:1:1}
\begin{split}
&\frac{ \phi^{n+1}_{2,m}}{\delta t}=-M(-\eps^2_\phi\Delta \phi^{n+\frac 12}_{2,m}-
h'(\phi^{n,\star}_m));
\end{split}
\end{equation}
\begin{equation}\label{part:1:2}
\begin{split}
&\frac{ \phi^{n+1}_{3,m}}{\delta t}=-M(-\eps^2_\phi\Delta \phi^{n+\frac 12}_{3,m}-
h'(\phi^{n,\star}_m)h(\psi^{n,\star})).
\end{split}
\end{equation}
Then, writing
\begin{equation}\label{sol:update:2}
\begin{split}
&\psi^{n+1}=\psi_1^{n+1} +\sum\limits_{m=1}^N\eta_m^{n+\frac 12}\psi_{2,m}^{n+1},
\end{split}
\end{equation}
in \eqref{scheme:3}, we find that $\psi_1^{n+1}$ and
$ (\psi_{2,m}^{n+1}, \,m=1,2,\cdots,N)$ can be determined from the following decoupled linear systems:
\begin{equation}\label{part:2}
\begin{split}
&\frac{ \psi^{n+1}_1-\psi^n}{\delta  t}=-M(-\eps^2_\psi\Delta \psi_1^{n+\frac 12} +(\frac{\delta \tilde{E}}{\delta \psi})^{n,\star});
\end{split}
\end{equation}
\begin{equation}\label{part:2:2}
\begin{split}
&\frac{ \psi^{n+1}_{2,m}}{\delta  t}=-M(-\eps^2_\psi\Delta \psi_{2,m}^{n+\frac 12} +
h(\phi^{n,\star}_m)h'(\psi^{n,\star})).
\end{split}
\end{equation}
We observe that the above systems are all linear  Poisson-type equation with constant coefficients so they can be efficiently solved. 

Once we have obtained $(\phi_{i,m}^{n+1}, \,i=1,2,3)$ and $ \psi_{i,m}^{n+1}, \,i=1,2)$,  we  plug  \eqref{sol:update:1}-\eqref{sol:update:2} into \eqref{scheme:4}-\eqref{scheme:5} to obtain a $2\times 2$ nonlinear algebraic system for $(\lambda_m^{n+1},\eta_m^{n+1})$. For $\delta t$ sufficiently small, this nonlinear algebraic system admits real solutions that  be solved with an iterative method at negligible cost.

In summary,  the scheme \eqref{scheme:1}-\eqref{scheme:5} can be efficiently implemented as follows.

\begin{itemize}
\item Solve $\psi_1^{n+1}$ from  \eqref{part:2}.
 \item For $m=1,\cdots, N$:
\begin{itemize}
\item   solve $(\phi_{i,m}^{n+1}, \,i=1,2,3)$  from \eqref{part:1}-\eqref{part:1:2} and $\psi_{2,m}^{n+1}$   from \eqref{part:2:2};
  \item  determine the Lagrange multipliers  $(\lambda_m^{n+1},\eta_m^{n+1})$ from the coupled  nonlinear algebraic system \eqref{scheme:4}-\eqref{scheme:5};
\item  update $\phi_m^{n+1}$  using \eqref{sol:update:1}.
\end{itemize}
\item Update $\psi^{n+1}$ using  \eqref{sol:update:2}.
\end{itemize}

\subsection{A linear scheme}
 In practice,  the scheme \eqref{scheme:1}-\eqref{scheme:5} should be used if we want to exactly preserve the volume dynamics of  chromosome and heterochromatin. A  disadvantage  of the scheme \eqref{scheme:1}-\eqref{scheme:5} is that we need to solve a nonlinear algebraic system which may require small time steps. 
 To accelerate the simulation,  
we  construct below a linear scheme for  system  \eqref{nuclear:1}-\eqref{nuclear:5} which is more efficient but only  approximately preserve the volume dynamics.  

To this end, we reformulate   \eqref{nuclear:1}-\eqref{nuclear:5} into the following equivalent system:
\begin{eqnarray}
&&\hskip 2cm\frac{\partial \phi_m}{\partial t}=-M\big(-\eps^2_\phi\Delta \phi_m+\frac{\delta \tilde{E}}{\delta \phi_m}-\lambda_m h'(\phi_m)-\eta_m h'(\phi_m)h(\psi)\big), \quad m=1,\cdots,N,\label{new:eq:1}\\
&&\hskip 2cm\frac{\partial \psi}{\partial t}=-M\big(-\eps^2_\psi\Delta \psi +\frac{\delta \tilde{E}}{\delta \psi}-\sum\limits_{m=1}^N\eta_mh(\phi_m)h'(\psi)\big),\label{new:eq:3}\\
&&\hskip 2cm V'_m(t)=\int_{\Omega}h'(\phi_m)\partial_t\phi_md\bx, \quad m=1,\cdots,N,\label{new:eq:5}\\
&&\hskip 2cm v'_m(t)=\int_{\Omega}h'(\phi_m)h(\psi)\partial_t\phi_{m}+h(\phi_m)h'(\psi)\partial_t\psi d\bx,\quad m=1,\cdots,N.\label{new:eq:6}
\end{eqnarray}
Note that the last two relations are obtained by taking the time derivative of $V_m$ and $v_m$ in  \eqref{nuclear:4}. 

A second-order linear scheme  for the new system
 \eqref{new:eq:1}-\eqref{new:eq:6} is as follows: 
  \begin{align}
\frac{ \phi^{n+1}_m-\phi_m^n}{\delta t}&=-M\big(-\eps^2_\phi\Delta \phi^{n+\frac 12}_m+(\frac{\delta \tilde{E}}{\delta \phi_m})^{n,\star}\nonumber\\
 &-\lambda^{n+\frac 12}_m h'(\phi^{n,\star}_m)-\eta^{n+\frac 12}_m h'(\phi^{n,\star}_m)h(\psi^{n,\star})\big),\; m=1,\cdots,N,\label{linear:scheme:1}\\
\frac{ \psi^{n+1}-\psi^n}{\delta  t}&=-M\big(-\eps^2_\psi\Delta \psi^{n+\frac 12} +(\frac{\delta \tilde{E}}{\delta \psi})^{n,\star} -\sum\limits_{m=1}^N\eta_m^{n+\frac 12}h(\phi^{n,\star}_m)h'(\psi^{n,\star})\big),\label{linear:scheme:3}\\
 V_m(t^{n+1})-V_m(t^n)&=\int_{\Omega}h'(\phi^{n,\star}_m)(\phi_m^{n+1}-\phi_m^n)d\bx,\; m=1,\cdots,N,\label{linear:scheme:5}\\
 v_m(t^{n+1})-v_m(t^n)&=\int_{\Omega}h'(\phi^{n,\star}_m)h(\psi^{n,\star})(\phi_{m}^{n+1}-\phi_m^n)\nonumber\\ &+h(\phi^{n,\star}_m)h'(\psi^{n,\star})(\psi^{n+1}-\psi^n)d\bx,\; m=1,\cdots,N.\label{linear:scheme:6}
\end{align}
The above coupled scheme can be solved in essentially the same fashion as the scheme \eqref{scheme:1}-\eqref{scheme:5}. In fact, setting
\begin{equation}\label{sol:update:1b}
\begin{split}
&\phi_m^{n+1} = \phi_{1,m}^{n+1} + \lambda_m^{n+\frac 12}\phi_{2,m}^{n+1} +\eta_m^{n+\frac 12}\phi_{3,m}^{n+1},
\end{split}
\end{equation}
in  \eqref{linear:scheme:1}, we find that for $m=1,2,\cdots,N$, 
 $(\phi_{i,m}^{n+1}, \,i=1,2,3)$ are still determined from \eqref{part:1}- \eqref{part:1:2}.
Then, writing
\begin{equation}\label{sol:update:2b}
\begin{split}
&\psi^{n+1}=\psi_1^{n+1} +\sum\limits_{m=1}^N\eta_m^{n+\frac 12}\psi_{2,m}^{n+1},
\end{split}
\end{equation}
in \eqref{linear:scheme:3}, we find that $\psi_1^{n+1}$ and
$ (\psi_{2,m}^{n+1}, \,m=1,2,\cdots,N)$ are also  determined from \eqref{part:2}- \eqref{part:2:2}. 
Once we have obtained $(\phi_{i,m}^{n+1}, \,i=1,2,3)$ and $ \psi_{i,m}^{n+1}, \,i=1,2)$,  we  plug  \eqref{sol:update:1b}-\eqref{sol:update:2b} into \eqref{linear:scheme:5}-\eqref{linear:scheme:6} to obtain a $2\times 2$ linear algebraic system for $(\lambda_m^{n+1},\eta_m^{n+1})$ that can be solved explicitly.
In summary,  the scheme \eqref{linear:scheme:1}-\eqref{linear:scheme:6} can be efficiently implemented as follows.

\begin{itemize}
\item Solve $\psi_1^{n+1}$ from  \eqref{part:2}.
 \item For $m=1,\cdots, N$:
\begin{itemize}
\item   solve $(\phi_{i,m}^{n+1}, \,i=1,2,3)$  from \eqref{part:1}-\eqref{part:1:2} and $\psi_{2,m}^{n+1}$   from \eqref{part:2:2};
  \item  determine the Lagrange multipliers  $(\lambda_m^{n+1},\eta_m^{n+1})$ from the coupled  linear algebraic system \eqref{linear:scheme:5} and \eqref{linear:scheme:6};
\item  update $\phi_m^{n+1}$  using \eqref{sol:update:1}.
\end{itemize}
\item Update $\psi^{n+1}$ using  \eqref{sol:update:2}.
\end{itemize}
Note that  the scheme \eqref{linear:scheme:1}-\eqref{linear:scheme:6}  is well posed for any time step.

\subsection{A weakly nonlinear energy stable scheme}
Note that the schemes   \eqref{scheme:1}-\eqref{scheme:5} and \eqref{linear:scheme:1}-\eqref{linear:scheme:6} are not guaranteed to be energy dissipative. 
 Below we modify the scheme  \eqref{scheme:1}-\eqref{scheme:5} slightly to construct a weakly nonlinear but  energy stable scheme with essentially  the same computational cost for $t\ge t_0$%
  when volumes of each chromosome $V_m(t)$ and heterochromatin $v_m(t)$ become constants.
 
The idea is to introduce another Lagrange multiplier to enforce the energy dissipation. To this end,  we introduce another Lagrange multiplier $R(t)$ and  expand the system \eqref{new:eq:1}-\eqref{new:eq:6} for $t\ge t_0$ as
\begin{eqnarray}
&&\frac{\partial \phi_m}{\partial t}=-M\big(-\eps^2_\phi\Delta \phi_m+R(t)\frac{\delta \tilde{E}}{\delta \phi_m}-\lambda_m h'(\phi_m)-\eta_m h'(\phi_m)h(\psi)\big),\;m=1,\cdots,N,\label{stab:eq:1}\\
&&\frac{\partial \psi}{\partial t}=-M\big(-\eps^2_\psi\Delta \psi +R(t)\frac{\delta \tilde{E}}{\delta \psi}-\sum\limits_{m=1}^N\eta_mh(\phi_m)h'(\psi)\big),\label{stab:eq:3}\\
&&\int_{\Omega}h(\phi^0_m)d\bx=\int_{\Omega}h(\phi_m)d\bx,\;m=1,\cdots,N,\label{stab:eq:5}\\
&& \int_{\Omega}h(\phi_m^0)h(\psi^0) d\bx =\int_{\Omega}h(\phi_m)h(\psi) d\bx, \;m=1,\cdots,N,\label{stab:eq:6}\\
&& \frac{d}{dt}\tilde{E}=R(t)\sum\limits_{m=1}^N(\frac{\delta \tilde{E}}{\delta \phi_m},\partial_t\phi_m)+R(t)(\frac{\delta \tilde{E}}{\delta \psi},\partial_t\psi)\label{stab:eq:7}\\
&&\hskip 2cm +\sum\limits_{m=1}^N\{(h(\phi_m)h'(\psi), \partial_t\psi)+ (h'(\phi_m)h(\psi),\partial_t\phi_m)\}.\nonumber
\end{eqnarray}

\begin{remark}
Since volumes of each chromosome $V_m=\int_{\Omega}h(\phi^0_m)d\bx$ and heterochromatin $v_m=\int_{\Omega}h(\phi_m^0)h(\psi^0) d\bx$ are constants for $t\ge t_0$, we have $\sum\limits_{m=1}^N\{(h(\phi_m)h'(\psi), \partial_t\psi)+ (h'(\phi_m)h(\psi),\partial_t\phi_m)\} =0$ for $t\ge t_0$.   This zero term is critical for  constructing  energy stable schemes.
\end{remark}

Then,  a second-order energy stable scheme based on system \eqref{stab:eq:1}-\eqref{stab:eq:7} can be constructed as follows. 

\begin{align}
\frac{ \phi^{n+1}_m-\phi_m^n}{\delta t}&=-M\big(-\eps^2_\phi\Delta \phi^{n+\frac 12}_m\nonumber\\
& +R^{n+\frac 12}(\frac{\delta \tilde{E}}{\delta \phi_m})^{n,\star}
 -\lambda^{n+\frac 12}_m h'(\phi^{n,\star}_m)-\eta^{n+\frac 12}_m h'(\phi^{n,\star}_m)h(\psi^{n,\star})\big),\;m=1,\cdots,N, \label{stab:linear:scheme:1}\\
\frac{ \psi^{n+1}-\psi^n}{\delta  t}&=-M\big(-\eps^2_\psi\Delta \psi^{n+\frac 12} +R^{n+\frac 12}(\frac{\delta \tilde{E}}{\delta \psi})^{n,\star} -\sum\limits_{m=1}^N\eta_m^{n+\frac 12}h(\phi^{n,\star}_m)h'(\psi^{n,\star})\big),\label{stab:linear:scheme:2}\\
\int_{\Omega}h(\phi^0_m)d\bx&=\int_{\Omega}h(\phi^{n+1}_m)d\bx,\;m=1,\cdots,N,\label{stab:linear:scheme:3}\\
\int_{\Omega}h(\phi_m^0)h(\psi^0) d\bx &=\int_{\Omega}h(\phi^{n+1}_m)h(\psi^{n+1})d\bx,\;m=1,\cdots,N,\label{stab:linear:scheme:4}\\
\tilde{E}^{n+1}(\phi_m^{n+1},\psi^{n+1},\phi_0)&-\tilde{E}^n(\phi_m^n,\psi^n,\phi_0) = R^{n+\frac 12}\sum\limits_{m=1}^N((\frac{\delta \tilde{E}}{\delta \phi_m})^{n,\star},\phi_m^{n+1}-\phi^n_m)\nonumber\\ &+R^{n+\frac 12}((\frac{\delta \tilde{E}}{\delta \psi})^{n,\star},\psi^{n+1}-\psi^n)+ (h'(\phi^{n,\star}_m),\phi^{n+1}_m-\phi_m^n)\nonumber\\&
+\sum\limits_{m=1}^N\{(h(\phi_m^{n,\star})h'(\psi^{n,\star}), \psi^{n+1}-\psi^n)+ (h'(\phi^{n,\star}_m)h(\psi^{n,\star}),\phi^{n+1}_m-\phi_m^n)\}.\label{stab:linear:scheme:5}
\end{align}
The above scheme is coupled and weakly nonlinear as \eqref{stab:linear:scheme:3}--\eqref{stab:linear:scheme:5} lead to a system of nonlinear algebraic equations for the Lagrange multipliers. The scheme can be efficiently solved as follows: 

For $m=1,2,\cdots,N$,  setting  
\begin{equation}\label{update:1}
\phi_m^{n+1}  =  \phi_{1,m}^{n+1} + \lambda_m^{n+\frac 12} \phi_{2,m}^{n+1} + \eta_m^{n+\frac 12}\phi_{3,m}^{n+1} + R^{n+\frac 12}\phi_{4,m}^{n+1},
\end{equation}
in \eqref{stab:linear:scheme:1}-\eqref{stab:linear:scheme:2}, we find that $\phi_{2,m}^{n+1}$ and $\phi_{3,m}^{n+1}$ are determined again by \eqref{part:1:1}-\eqref{part:1:2}, while
 $\phi_{1,m}^{n+1}$ and $\phi_{4,m}^{n+1}$ can be determined by
\begin{equation}\label{part:10}
\begin{split}
\frac{ \phi^{n+1}_{1,m}-\phi_m^n}{\delta t}=-M(-\eps^2_\phi\Delta \phi^{n+\frac 12}_{1,m});
\end{split}
\end{equation}
and 
\begin{equation}\label{part:11}
\begin{split}
\frac{ \phi^{n+1}_{4,m}}{\delta t}=-M(-\eps^2_\phi\Delta \phi^{n+\frac 12}_{4,m}+(\frac{\delta \tilde{E}}{\delta \phi_m})^{n,\star}).
\end{split}
\end{equation}
On the other hand, setting
\begin{equation}\label{update:2}
\psi^{n+1} =  \psi_1^{n+1}  +   \sum\limits_{m=1}^N\eta_m^{n+\frac 12}\psi^{n+1}_{2,m} +  R^{n+\frac 12}\psi_3^{n+1},  
\end{equation}
in  \eqref{stab:linear:scheme:1}-\eqref{stab:linear:scheme:2},  we find that $\psi^{n+1}_{2,m}$ is still determined by \eqref{part:2:2},  while 
$\psi_1^{n+1}$ and $\psi_3^{n+1}$ can be determined by 
\begin{equation}\label{part:20}
\begin{split}
\frac{ \psi^{n+1}_1-\psi^n}{\delta  t}=-M(-\eps^2_\psi\Delta \psi_1^{n+\frac 12});
\end{split}
\end{equation}
and 
\begin{equation}\label{part:21}
\begin{split}
\frac{ \psi^{n+1}_3}{\delta  t}=-M(-\eps^2_\psi\Delta \psi_3^{n+\frac 12} +(\frac{\delta \tilde{E}}{\delta \psi})^{n,\star}).
\end{split}
\end{equation}
Finally, we plug   \eqref{update:1} and \eqref{update:2} into  \eqref{stab:linear:scheme:3}-\eqref{stab:linear:scheme:5} to obtained a coupled  nonlinear algebraic system of $2N+1$ equations for  $(\lambda_m^{n+\frac 12},\eta_m^{n+\frac 12}, \,m=1,2,\cdots,N)$ and $R^{n+\frac 12}$. 
Hence,  compared with the scheme  the scheme \eqref{scheme:1}-\eqref{scheme:5}, \eqref{stab:linear:scheme:1}-\eqref{stab:linear:scheme:5} involves a slightly more  complicated nonlinear algebraic system which may require small time steps to have suitable solutions. 

In summary,  we can determine $\phi_m^{n+1}$ and $\psi^{n+1}$ as follows:
\begin{itemize}
\item  Solve  $(\phi_{1,m}^{n+1}, \phi_{2,m}^{n+1},\phi_{3,m}^{n+1},\phi_{4,m}^{n+1})$ and $\psi_{2,m}^{n+1}$ for $m=1,2,\cdots,N$ from \eqref{part:1:1}-\eqref{part:1:2}, \eqref{part:2:2} and \eqref{part:10}-\eqref{part:11}, and solve  $(\psi_1^{n+1},\psi_3^{n+1})$ from \eqref{part:20}-\eqref{part:21}.
\item  Solve  $(\lambda_m^{n+\frac 12},\eta_m^{n+\frac 12}, \,m=1,2,\cdots,N)$ and $R^{n+\frac 12}$  from the coupled  nonlinear system \eqref{stab:linear:scheme:3}-\eqref{stab:linear:scheme:5}.
\item  Update $\phi_m^{n+1}$ ($m=1,2,\cdots,N$) and $\psi^{n+1}$ from equations \eqref{update:1} and  \eqref{update:2}.
\end{itemize}

\begin{theorem}
Let $(\phi^{n+1}_m,\psi^{n+1},\lambda^{n+1}_m,\eta^{n+1}_m,R^{n+1})$ be the solution of   \eqref{stab:linear:scheme:1}-\eqref{stab:linear:scheme:5} with \eqref{rate2}. 
Then the following energy dissipation law  is satisfied unconditionally:
\begin{equation}
\frac{E^{n+1}-E^n}{\delta t}   \leq- M(\sum\limits_{m=1}^N\|\frac{ \phi^{n+1}_m-\phi_m^n}{\delta t} \|^2+\|\frac{ \psi^{n+1}_m-\psi_m^n}{\delta t}\|^2),\quad \forall n\ge t_0/{\delta t}
\end{equation}
where  the energy $E^{n+1}$ is defined as 
\begin{equation}
E^{n+1}=\sum\limits_{m=1}^N\frac{\eps_{\phi}^2}{2}\|\Grad\phi^{n+1}_m\|^2 +\frac{\eps_{\psi}^2}{2}\|\Grad\psi^{n+1}\|^2+\tilde{E}^{n+1}.
\end{equation}
\end{theorem}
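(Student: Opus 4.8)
The plan is to mirror, at the fully discrete level, the proof of the continuous energy law \eqref{law:1}, working only with indices $n\ge t_0/\delta t$ so that $V_m$ and $v_m$ are frozen at $\bar V_m,\bar v_m$, and exploiting the Crank--Nicolson structure of the quadratic part of $E$ together with the defining relation \eqref{stab:linear:scheme:5} for the energy multiplier $R^{n+\frac12}$. First I take the $L^2$ inner product of \eqref{stab:linear:scheme:1} with $\phi^{n+1}_m-\phi^n_m$, sum over $m=1,\dots,N$, take the inner product of \eqref{stab:linear:scheme:2} with $\psi^{n+1}-\psi^n$, and add; after dividing by $-M$ the left side becomes $-\frac{1}{M\delta t}(\sum_m\|\phi^{n+1}_m-\phi^n_m\|^2+\|\psi^{n+1}-\psi^n\|^2)$. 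For the Laplacian terms, integration by parts and the identity $(\nabla a^{n+\frac12},\nabla(a^{n+1}-a^n))=\frac12(\|\nabla a^{n+1}\|^2-\|\nabla a^n\|^2)$ produce exactly the increment of the quadratic part of $E$, namely $\sum_m\frac{\eps_\phi^2}{2}(\|\nabla\phi^{n+1}_m\|^2-\|\nabla\phi^n_m\|^2)+\frac{\eps_\psi^2}{2}(\|\nabla\psi^{n+1}\|^2-\|\nabla\psi^n\|^2)$.

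Next I dispose of the multiplier terms. The $\lambda^{n+\frac12}_m$ and $\eta^{n+\frac12}_m$ contributions generated above are $-\sum_m\lambda^{n+\frac12}_m(h'(\phi^{n,\star}_m),\phi^{n+1}_m-\phi^n_m)$ and $-\sum_m\eta^{n+\frac12}_m[(h'(\phi^{n,\star}_m)h(\psi^{n,\star}),\phi^{n+1}_m-\phi^n_m)+(h(\phi^{n,\star}_m)h'(\psi^{n,\star}),\psi^{n+1}-\psi^n)]$; both vanish, because the volume constraints \eqref{stab:linear:scheme:3}--\eqref{stab:linear:scheme:4}, used in their time-differentiated (incremental) form as in \eqref{new:eq:5}--\eqref{new:eq:6} and \eqref{linear:scheme:5}--\eqref{linear:scheme:6}, give for each $m$ and each $n\ge t_0/\delta t$ the orthogonality relations $(h'(\phi^{n,\star}_m),\phi^{n+1}_m-\phi^n_m)=V_m(t^{n+1})-V_m(t^n)=0$ and $(h'(\phi^{n,\star}_m)h(\psi^{n,\star}),\phi^{n+1}_m-\phi^n_m)+(h(\phi^{n,\star}_m)h'(\psi^{n,\star}),\psi^{n+1}-\psi^n)=v_m(t^{n+1})-v_m(t^n)=0$, the exact discrete analogues of \eqref{law:eq3}--\eqref{law:eq4}. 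What remains on the right is the quadratic increment plus $R^{n+\frac12}(\sum_m((\frac{\delta\tilde E}{\delta\phi_m})^{n,\star},\phi^{n+1}_m-\phi^n_m)+((\frac{\delta\tilde E}{\delta\psi})^{n,\star},\psi^{n+1}-\psi^n))$. Feeding the same orthogonality relations into the right-hand side of \eqref{stab:linear:scheme:5} --- all of whose $h'(\cdot)$-type terms then vanish, the last group being $\sum_m(v_m(t^{n+1})-v_m(t^n))=0$ --- collapses it to $\tilde E^{n+1}-\tilde E^n=R^{n+\frac12}(\sum_m((\frac{\delta\tilde E}{\delta\phi_m})^{n,\star},\phi^{n+1}_m-\phi^n_m)+((\frac{\delta\tilde E}{\delta\psi})^{n,\star},\psi^{n+1}-\psi^n))$; substituting this identity turns the $R^{n+\frac12}(\cdots)$ term into $\tilde E^{n+1}-\tilde E^n$, so the whole right side becomes $E^{n+1}-E^n$. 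Dividing by $\delta t$ yields $\frac{E^{n+1}-E^n}{\delta t}=-\frac1M(\sum_m\|\frac{\phi^{n+1}_m-\phi^n_m}{\delta t}\|^2+\|\frac{\psi^{n+1}-\psi^n}{\delta t}\|^2)$, which in particular gives the stated inequality.

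The one place that truly requires care is the cancellation in the second step: after testing the scheme the quantity that appears is $(h'(\phi^{n,\star}_m),\phi^{n+1}_m-\phi^n_m)$, whereas the pointwise-nonlinear constraint $\int_\Omega h(\phi^{n+1}_m)\,d\bx=\int_\Omega h(\phi^0_m)\,d\bx$ only controls $\int_\Omega(h(\phi^{n+1}_m)-h(\phi^n_m))\,d\bx$, a genuinely different quantity. It is precisely to make this cancellation exact that the constraints must be imposed through their differentiated form $V_m'(t)=\int_\Omega h'(\phi_m)\partial_t\phi_m\,d\bx$, etc.\ (cf.\ \eqref{new:eq:5}--\eqref{new:eq:6}); with the energy-stable scheme written using those incremental constraints, the relations above hold identically. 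The hypothesis $n\ge t_0/\delta t$ is equally indispensable: it is exactly the regime in which all the constraint increments vanish, hence the only regime in which unconditional dissipation is attainable --- for $n<t_0/\delta t$ the moving target volumes $V_m(t),v_m(t)$ produce a nonzero (though controllable) forcing and one gets at best a modified energy estimate. Everything else --- the Crank--Nicolson telescoping of the first step and the $R^{n+\frac12}$-substitution of the second --- is routine Lagrange-multiplier/SAV bookkeeping, and no restriction on $\delta t$ is needed once existence of a solution to the $2N+1$ multiplier equations \eqref{stab:linear:scheme:3}--\eqref{stab:linear:scheme:5} is granted.
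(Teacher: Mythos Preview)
Your argument follows exactly the paper's route: test \eqref{stab:linear:scheme:1}--\eqref{stab:linear:scheme:2} with the time increments, apply the Crank--Nicolson identity $(\nabla a^{n+\frac12},\nabla(a^{n+1}-a^n))=\tfrac12(\|\nabla a^{n+1}\|^2-\|\nabla a^n\|^2)$ to telescope the quadratic part, and then invoke \eqref{stab:linear:scheme:5} to replace the $R^{n+\frac12}$ pairing by $\tilde E^{n+1}-\tilde E^n$. The paper is much terser at the cancellation step --- it simply states that, using the constraint equations, \eqref{stab:5} ``reduces to'' \eqref{stab:6} --- whereas you spell out that the $\lambda_m^{n+\frac12}$ and $\eta_m^{n+\frac12}$ contributions, as well as the auxiliary $h'$-terms inserted on the right of \eqref{stab:linear:scheme:5}, must each vanish. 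You also correctly flag the one delicate point: the nonlinear constraints \eqref{stab:linear:scheme:3}--\eqref{stab:linear:scheme:4} control $\int(h(\phi_m^{n+1})-h(\phi_m^n))\,d\bx$, not the linearized quantity $(h'(\phi_m^{n,\star}),\phi_m^{n+1}-\phi_m^n)$ that actually appears after testing, so exact cancellation needs the incremental form of the constraints (as in \eqref{linear:scheme:5}--\eqref{linear:scheme:6}). The paper's proof tacitly relies on the same identification without comment; your caveat is well taken, and otherwise the strategy and the resulting identity coincide with the paper's.
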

\begin{proof}
Note that for $ n\ge t_0/{\delta t}$, we have $V_m(t^n)=\bar V_m$ and $v_m(t^n)=\bar v_m$.

Taking inner product of equation \eqref{stab:linear:scheme:1} with $-\frac{1}{M}\frac{ \phi^{n+1}_m-\phi_m^n}{\delta t}$, we obtain 
\begin{equation}\label{stab:1}
\begin{split}
&-\frac 1M\|\frac{ \phi^{n+1}_m-\phi_m^n}{\delta t}\|^2=-\eps^2_\phi(\Delta \phi^{n+\frac 12}_m,\frac{ \phi^{n+1}_m-\phi_m^n}{\delta t})\\&+(R^{n+\frac 12}(\frac{\delta \tilde{E}}{\delta \phi_m})^{n,\star},\frac{ \phi^{n+1}_m-\phi_m^n}{\delta t})
 -\lambda^{n+\frac 12}_m (h'(\phi^{n,\star}_m),\frac{ \phi^{n+1}_m-\phi_m^n}{\delta t})\\&-\eta^{n+\frac 12}_m (h'(\phi^{n,\star}_m)h(\psi^{n,\star}),\frac{ \phi^{n+1}_m-\phi_m^n}{\delta t}).
 \end{split}
\end{equation}
Taking inner product of equation \eqref{stab:linear:scheme:2} with $-\frac 1M\frac{ \psi^{n+1}-\psi^n}{\delta  t}$, we derive
\begin{equation}\label{stab:2}
\begin{split}
&-\frac 1M\|\frac{ \psi^{n+1}-\psi^n}{\delta t}\|^2=-\eps^2_\psi(\Delta \psi^{n+\frac 12},\frac{ \psi^{n+1}-\psi^n}{\delta  t}) \\&+R^{n+\frac 12}((\frac{\delta \tilde{E}}{\delta \psi})^{n,\star},\frac{ \psi^{n+1}-\psi^n}{\delta  t})-\sum\limits_{m=1}^N\eta^{n+\frac 12}_m(h(\phi_m^{n,\star})h'(\psi^{n,\star}),\frac{ \psi^{n+1}-\psi^n}{\delta  t}).
\end{split}
\end{equation}
On the other hand, we have 
\begin{equation}\label{stab:3}
(\Delta \phi^{n+\frac 12}_m,\frac{ \phi^{n+1}_m-\phi_m^n}{\delta t})=-\frac{1}{2\delta t}(\|\Grad\phi_m^{n+1}\|^2-\|\Grad\phi_m^n\|^2),
\end{equation}
and
\begin{equation}\label{stab:4}
(\Delta \psi^{n+\frac 12},\frac{ \psi^{n+1}-\psi^n}{\delta  t}) =-\frac{1}{2\delta t}(\|\Grad\psi^{n+1}\|^2-\|\Grad\psi^n\|^2).
\end{equation}
Summing up equations \eqref{stab:1} for $m=1,2,\cdots,N$ and combined with equation \eqref{stab:3}, we obtain
\begin{equation}\label{stab:5}
\begin{split}
 &-\frac 1M\sum\limits_{m=1}^N\|\frac{ \phi^{n+1}_m-\phi_m^n}{\delta t}\|^2-\frac 1M\|\frac{ \psi^{n+1}-\psi^n}{\delta t}\|^2=\sum\limits_{m=1}^N\Big\{-\eps^2_\phi(\Delta \phi^{n+\frac 12}_m,\frac{ \phi^{n+1}_m-\phi_m^n}{\delta t})\\&+(R^{n+\frac 12}(\frac{\delta \tilde{E}}{\delta \phi_m})^{n,\star},\frac{ \phi^{n+1}_m-\phi_m^n}{\delta t})
 -\lambda^{n+\frac 12}_m (h'(\phi^{n,\star}_m),\frac{ \phi^{n+1}_m-\phi_m^n}{\delta t})\\&-\eta^{n+\frac 12}_m (h'(\phi^{n,\star}_m)h(\psi^{n,\star}),\frac{ \phi^{n+1}_m-\phi_m^n}{\delta t})\Big\}-\eps^2_\psi(\Delta \psi^{n+\frac 12},\frac{ \psi^{n+1}-\psi^n}{\delta  t}) \\&+R^{n+\frac 12}((\frac{\delta \tilde{E}}{\delta \psi})^{n,\star},\frac{ \psi^{n+1}-\psi^n}{\delta  t})-\sum\limits_{m=1}^N\eta^{n+\frac 12}_m(h(\phi_m^{n,\star})h'(\psi^{n,\star}),\frac{ \psi^{n+1}-\psi^n}{\delta  t}).
\end{split}
\end{equation}
Using \eqref{stab:linear:scheme:2},  \eqref{stab:linear:scheme:4} and combing \eqref{stab:1}--\eqref{stab:4},   equation \eqref{stab:5} reduces to 
\begin{equation}\label{stab:6}
\begin{split}
 &-\frac 1M\sum\limits_{m=1}^N\|\frac{ \phi^{n+1}_m-\phi_m^n}{\delta t}\|^2-\frac 1M\|\frac{ \psi^{n+1}-\psi^n}{\delta t}\|^2=\sum\limits_{m=1}^N
 \frac{\eps_\phi^2}{2\delta t}(\|\Grad\phi_m^{n+1}\|^2-\|\Grad\phi_m^n\|^2)\\&+\frac{\eps_\psi^2}{2\delta t}(\|\Grad\psi^{n+1}\|^2-\|\Grad\psi^n\|^2)+\tilde{E}^{n+1}(\phi_m^{n+1},\psi^{n+1},\phi_0)-\tilde{E}^n(\phi_m^n,\psi^n,\phi_0).
\end{split}
\end{equation}
Finally, from \eqref{stab:6}  we arrive at the desired result.
\end{proof}

\section{Numerical simulations}
In this section, we consider the application of nuclear architecture reorganization system \eqref{nuclear:1}-\eqref{nuclear:5} to model drosophila nucleus with $8$ chromosomes and human nucleus with $46$ chromosomes.  We present  numerical simulations to explore the mechanisms underlying the reorganization process. The default computational domain is $\Omega=[-\pi,\pi)^2$  and  $(2,2.9)$ is chosen to be  the  $x$-diameter and  $y$-diameter  of an  elliptic nucleus which is located  in the center of domain $\Omega$.  We use a Fourier spectral method in space with $256^2$ modes, coupled with the three time discretization schemes presented in the last section.    When
presenting the simulations results, nucleus is depicted in white, chromosome territories in green, and heterochromatin in red ( see Fig.\;\ref{ini}).  

First, we test the convergence rate for proposed  linear  scheme and weakly nonlinear schemes. Then we study  the conventional architectures with affinity and without affinity. Finally, we explore the mechanisms underlying the reorganization process and the pattern formation of chromatin, e.g., the effect of  nucleus size and shape and different phase parameters.

\subsection{Convergence test}
We first test the convergence rate for the linear scheme \eqref{linear:scheme:1}-\eqref{linear:scheme:6} and the weakly nonlinear scheme \eqref{stab:linear:scheme:1}-\eqref{stab:linear:scheme:5} with fixed nucleus. The phase  parameters are set to be $\eps^2_{\phi}=0.01$, $\eps^2_{\psi}=0.05$, $\beta_0=\frac 53$, $\beta_\phi=1$, $\beta_\psi=\frac 23$ and $\gamma=0$.  The initial condition is chosen as the case of (b) in Fig.\;\ref{ini} with  Affinity $>0$.  The reference solutions are obtained with a very small time step $\delta t=10^{-6}$ using the linear scheme  \eqref{linear:scheme:1}-\eqref{linear:scheme:6}.  We plot
$\max_{m=1}^N\|\phi_m-\phi_{m,Ref}\|_{L^{\infty}}$ and $\|\psi-\psi_{Ref}\|_{L^{\infty}}$ in Fig.\;\ref{Order_test}. Second order convergence rate is observed for both  schemes.

\begin{figure}[htbp]
\centering
\subfigure{
\includegraphics[width=0.45\textwidth,clip==]{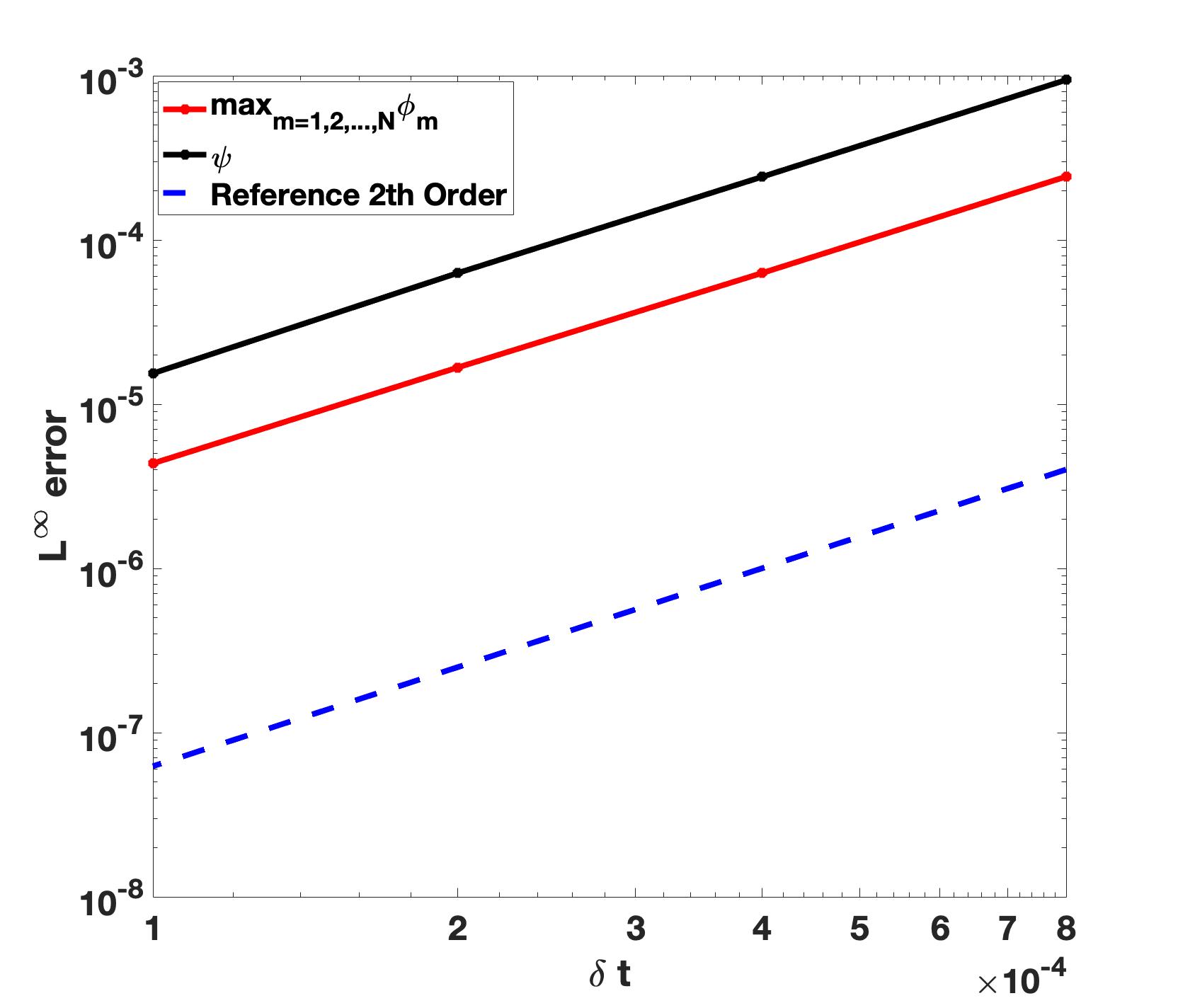}}
\subfigure{
\includegraphics[width=0.45\textwidth,clip==]{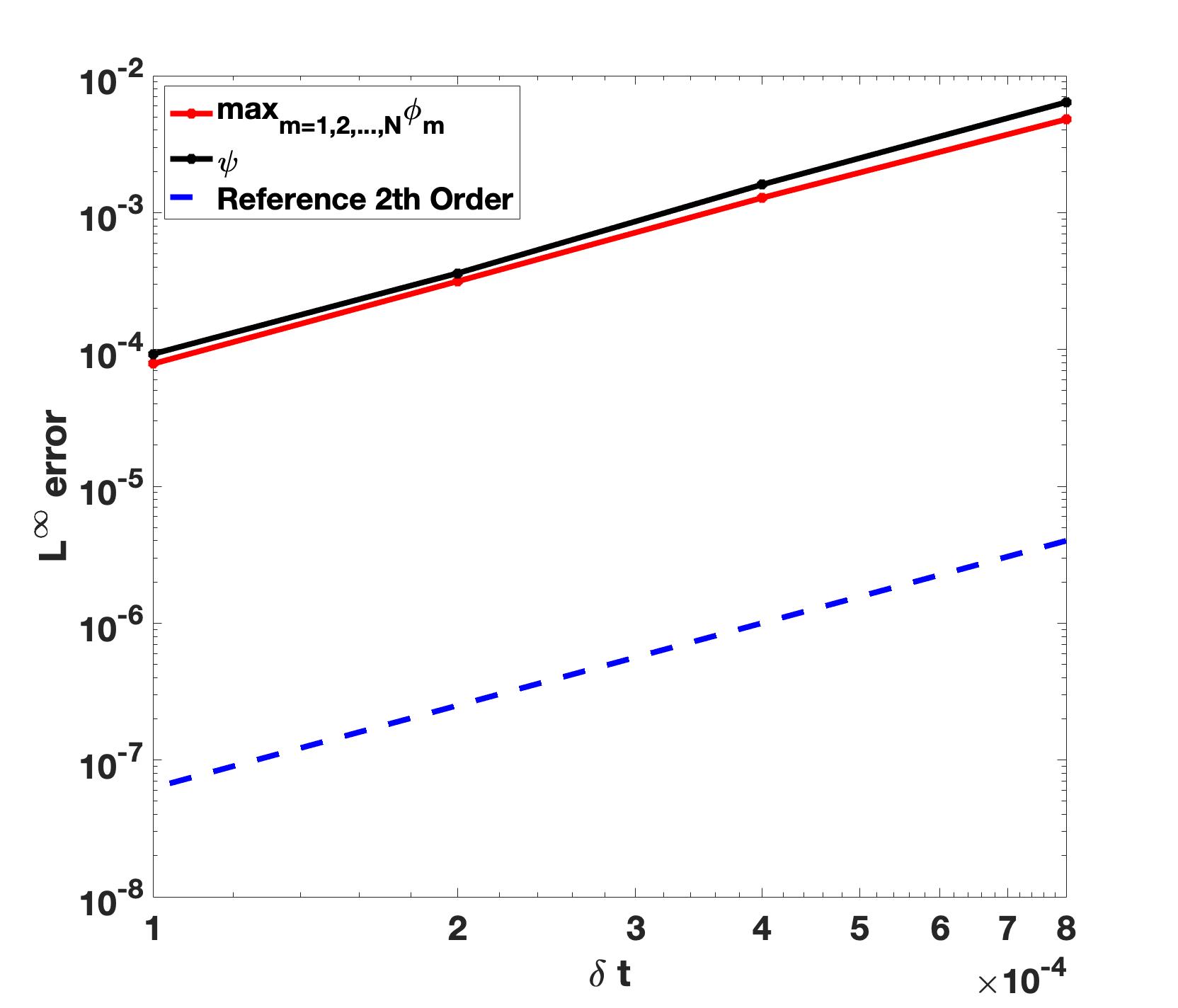}}
\caption{Convergence rate of linear scheme \eqref{linear:scheme:1}-\eqref{linear:scheme:6} and weakly nonlinear scheme \eqref{stab:linear:scheme:1}-\eqref{stab:linear:scheme:5} with fixed nucleus.}\label{Order_test}
\end{figure}

\subsection{Affinity test and conventional architecture with fixed nucleus}

We now demonstrate the conventional architecture for drosophila nucleus with $8$ chromosomes. The initial condition is given in Fig.\;\ref{ini} where an elliptic nucleus are generated by function $\phi_0(x,y)=\frac 12(1-\tanh(\frac{\sqrt{x^2+y^2/1.45^2}-2}{\sqrt{2}\eps}))$.  The $8$ chromosomes  are initialized by tanh-like functions :$\frac 12[1-\tanh(\frac{r}{\sqrt{2}\eps})]$ with centers at  $(0,2.5), (-1,1.4), (-0.3,-0.5), (1,-1), (0,0.6), (1,1.3), (0,-2.5), (-1,-0.8)$.  The $x$-diameter and $y$-diameter are $(0.2,0.4)$ for each elliptic nucleus.  A smaller ellipse with $x$-diameter and $y$-diameter as $(0.05,0.1)$ in each chromosome  is set to be heterochromatin territory. The affinity between heterochromatin and the nuclear envelope is controlled by the parameter $\gamma$. A positive affinity value indicates a tethering of heterochromatin to LBR or lamin A/C on the nuclear envelope.   To demonstrate that  the conventional architecture is obtained with the positive affinity, we choose $\gamma=0.02$ and $\gamma=0$  and plot in Fig.\;\ref{ini}   numerical results by using the  weakly nonlinear scheme \eqref{scheme:1}-\eqref{scheme:5}. We observe from Fig.\;\ref{ini} that heterochromatin domains are fused with adjacent heterochromatin.  When affinity $=0$ heterochromatin  accumulates at the territories between chromosomes. But there is no interaction with the region of the nuclear envelope.  With a  positive affinity, heterochromatin is observed to be distributed almost homogeneously along the nuclear envelope,  indicating the formation of the conventional architecture.  Our numerical simulations indicate that the affinity plays important roles in forming the conventional architecture, and  that  the expression of LBR and lamin A/C is essential to generate the conventional architecture. These numerical results  are consistent with the experiment results in \cite{solovei2013lbr}.

In Fig.\;\ref{volume}, we plot the dynamics of mean volume of chromosome $\bar{V}(t)=\frac{\sum_{i=1}^NV_m}{N}$ and heterochromatin $\bar{v}(t)=\frac{\sum_{i=1}^Nv_m}{N}$. From Fig.\;\ref{volume}, the volumes of chromosome and heterochromain are well preserved  by using our weakly nonlinear schemes \eqref{scheme:1}-\eqref{scheme:5}.

\begin{figure}
\centering
\subfigure[Initial condition.]{
\includegraphics[width=0.3\textwidth,clip==]
{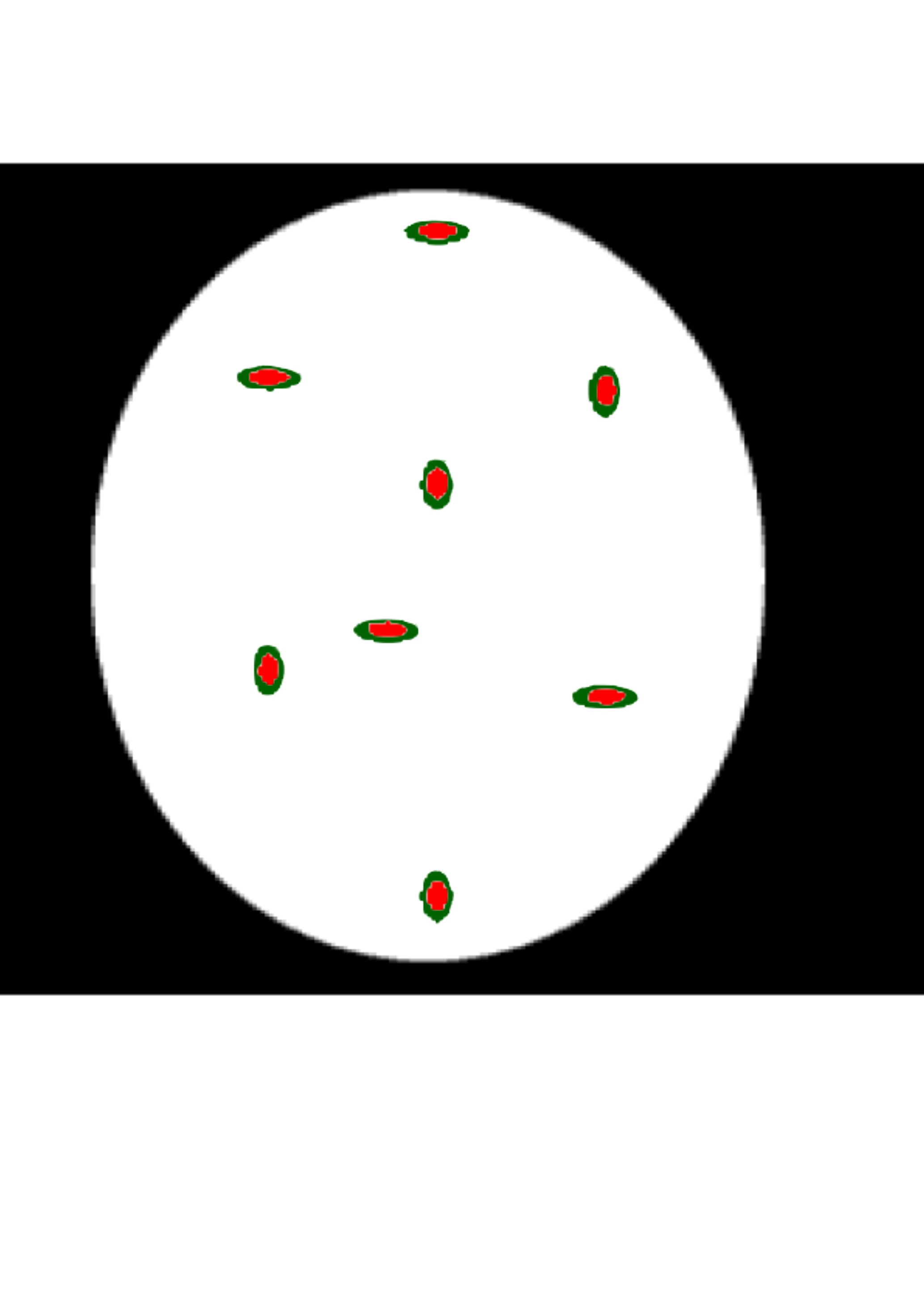}}\hskip 0cm
\subfigure[$\mbox{Affinity}>0$.]{
\includegraphics[width=0.3\textwidth,clip==]
{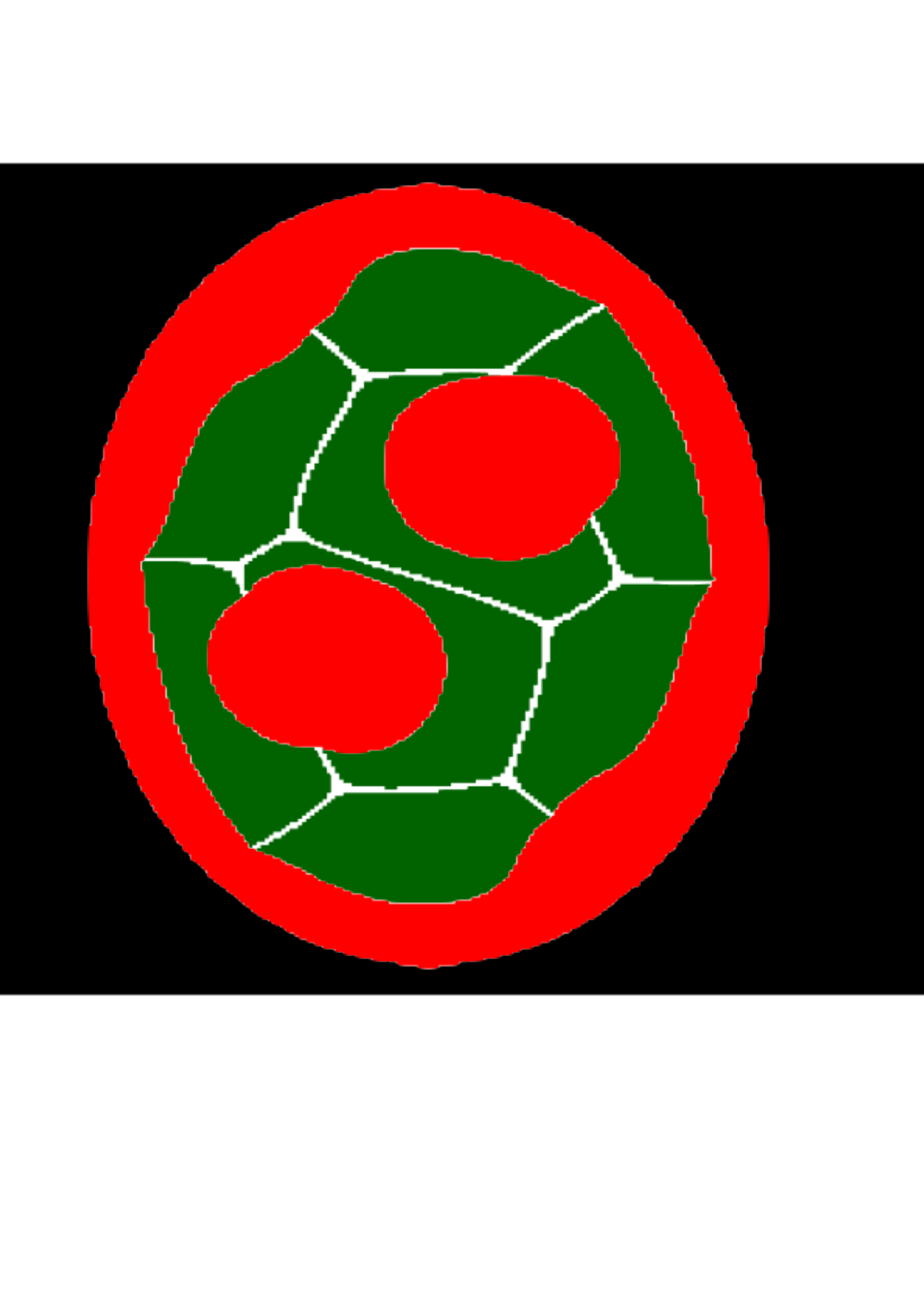}}\hskip 0cm
\subfigure[$\mbox{Affinity}=0$.]{
\includegraphics[width=0.3\textwidth,clip==]
{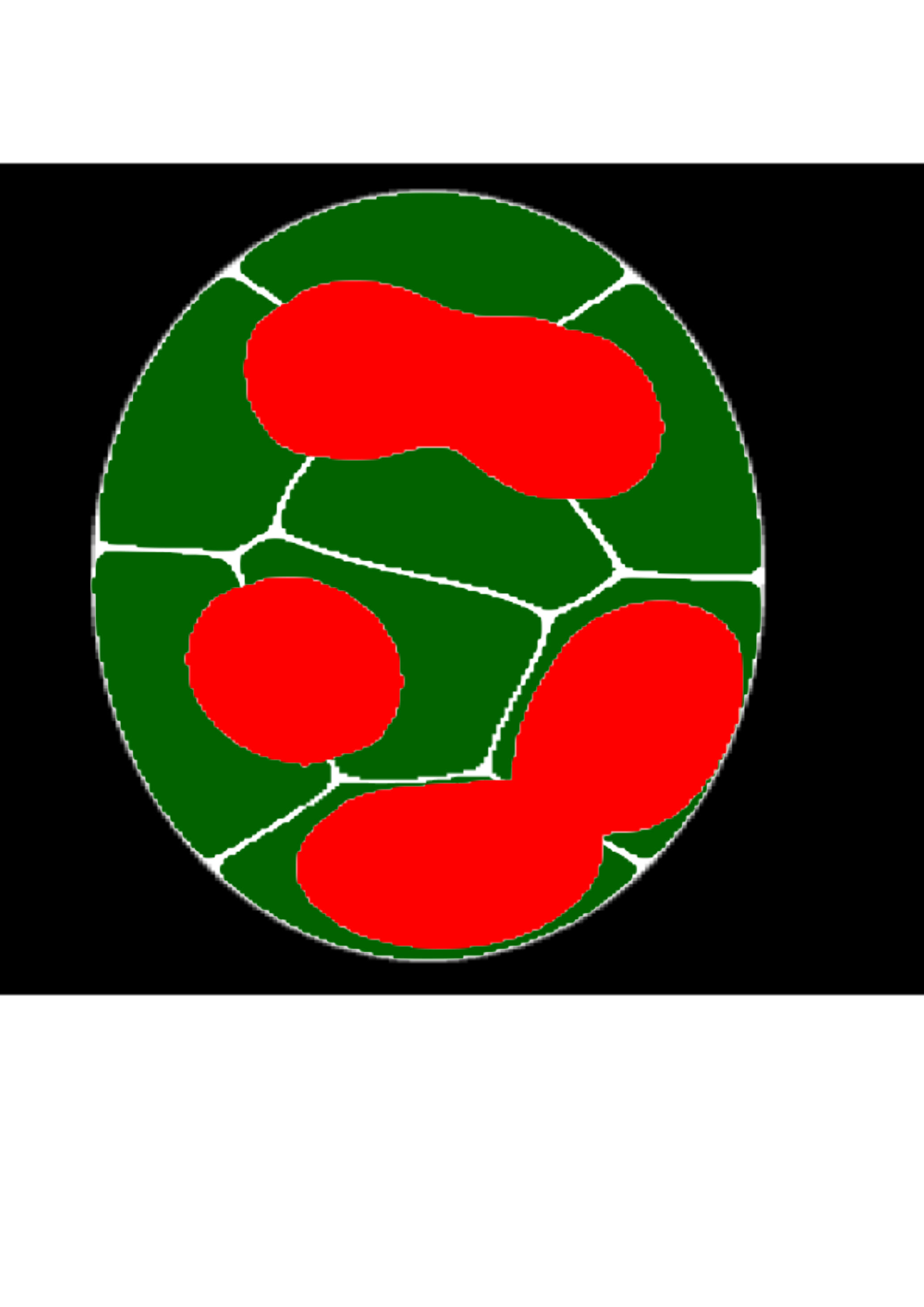}}\hskip 0cm
\subfigure[$\mbox{Affinity}=0$.]{
\includegraphics[width=0.3\textwidth,clip==]
{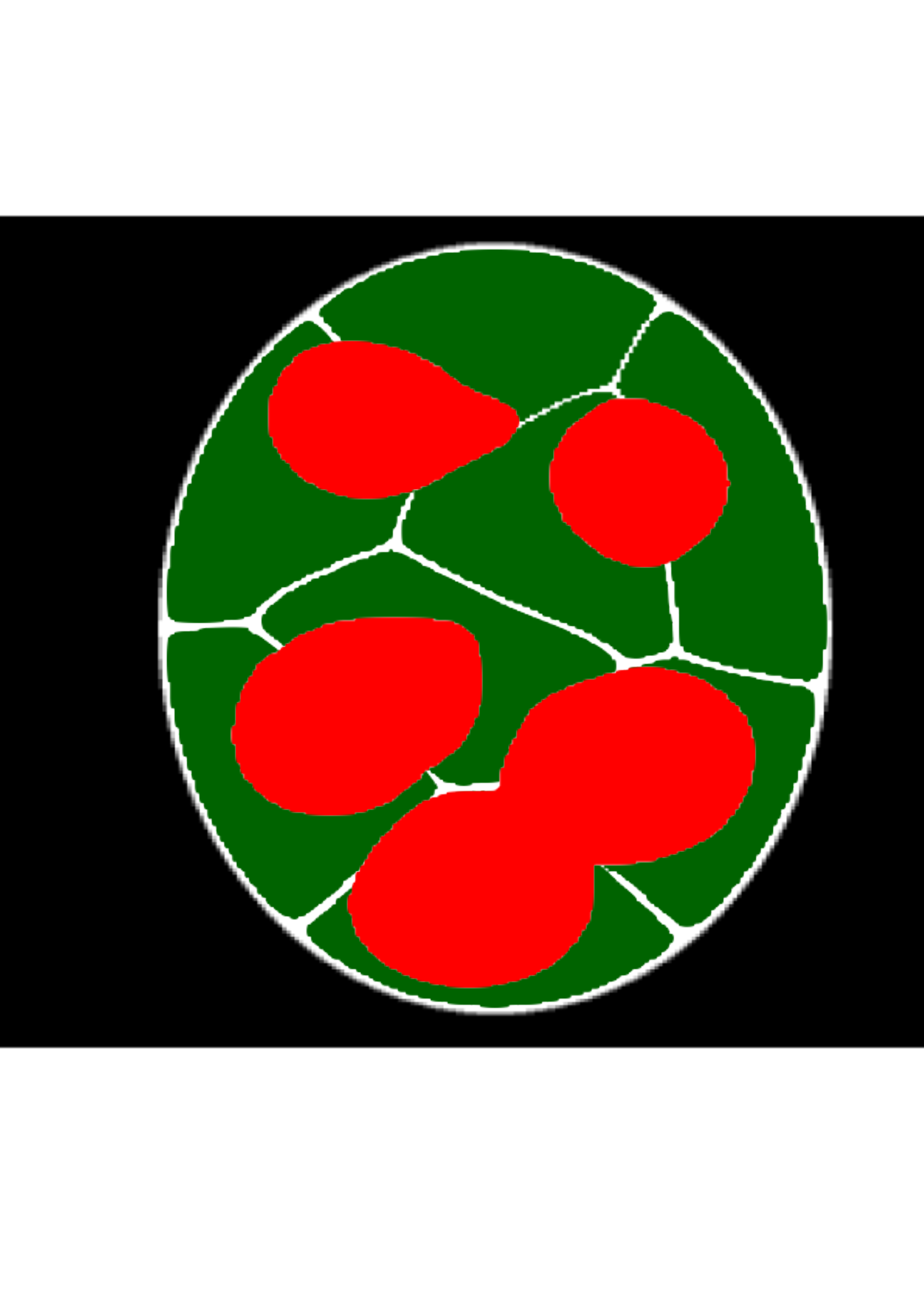}}\hskip 0cm
\caption{The parameters for nuclear reorganization process: $\beta_0=\frac 53$, $\beta_\phi=\frac 83$, $\beta_\psi=\frac 83$ with  $\gamma=0.02$ for positive affinity and $\gamma=0$ for zero affinity.  Interface parameters are $\eps_{\phi}^2=0.01$,  $\eps_{\psi}^2=0.05$.  $\bar{V}_m=\frac{\mbox{Nuclear Volume}}{N}$ and $\bar{v}_m=V_m\times 0.23$ where $m=8$ for drosophila nucleus. Volume growth rate parameters $\alpha_1=1$, $\alpha_2=10$ in \eqref{rate2}. }\label{ini}
\end{figure}

\begin{figure}[htbp]
\centering
\includegraphics[width=0.45\textwidth,clip==]{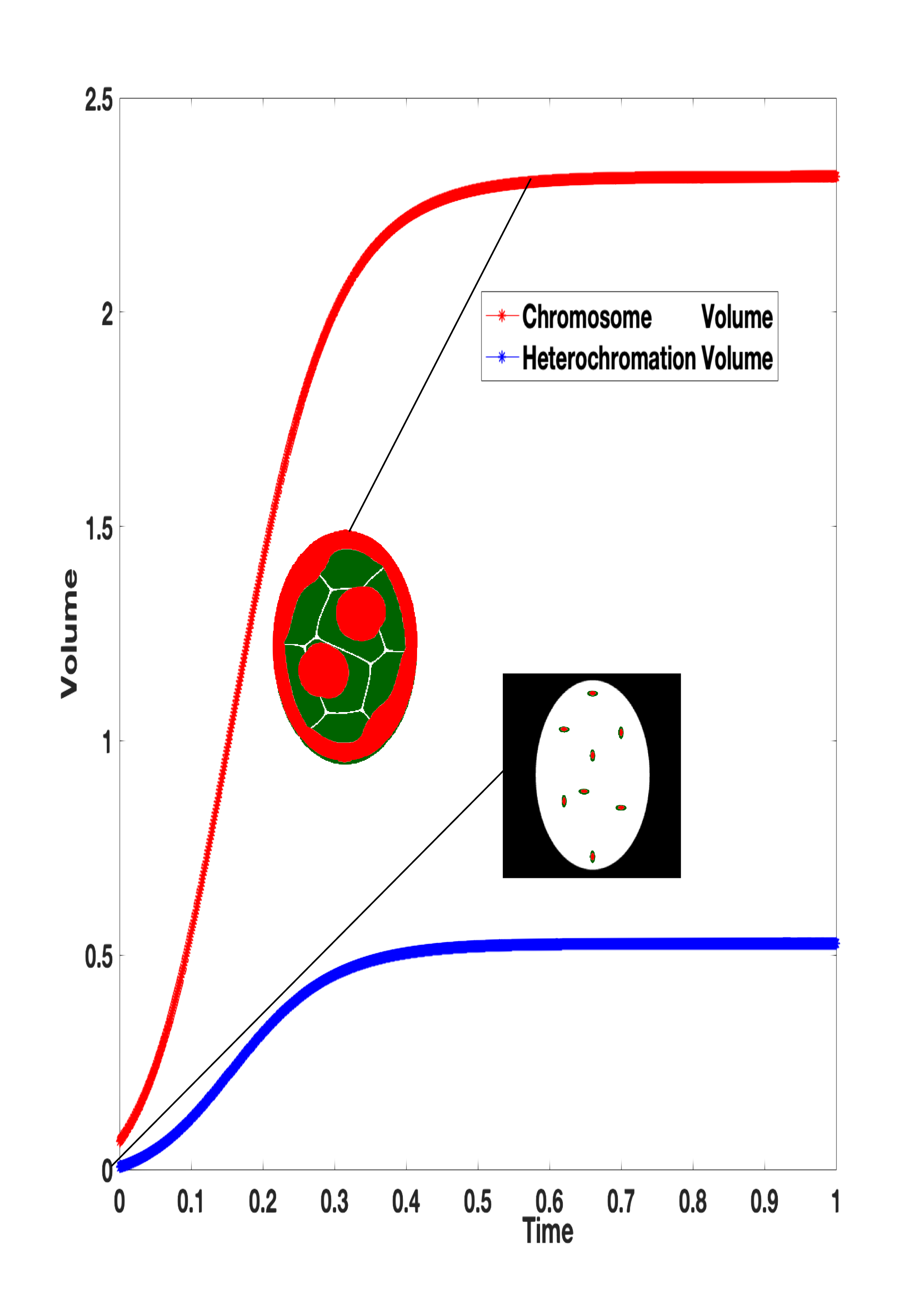}
\caption{Evolutions of  mean  volume of chromosome and heterochromation $\bar{V}(t)=\frac{\sum_{i=1}^NV_m}{N}$,  $\bar{v}(t)=\frac{\sum_{i=1}^Nv_m}{N}$ with respect to time for nuclear reorganization process with affinity $\gamma=0.02$.}\label{volume}
\end{figure}

\subsection{Inverted architecture and reorganization process}
In this subsection, we  study the  architecture reorganization process with fixed nucleus. First, we examine whether the increase of  heterochromatin conversion rate and the absence of  affinity  between the nuclear envelope and heterochromatin  are necessary for the induction of the single hetero-cluster in the inverted architecture.  We fix the heterochromatin conversion rate $\rho_m=\frac{v_m}{V_m}$ for $m=1,2,\cdots, N$,  and  set $\gamma=0$.  From the first row of Fig.\;\ref{process}, it is observed that affinity between  heterochromatin and nuclear envelope vanishes gradually. Finally, four clusters of heterochromatin are formed at $t=50$ when the conversion rates are  fixed for all $m$.  We then examine the case with  an increasing conversion rate $\rho_m(t)$  described by 
 \begin{equation}
 \rho_{m}(t) =\rho_m(0) +\frac{\bar{\rho}_mt}{t + \alpha_1 e^{-\alpha_2 t}},
 \end{equation}
 where $\alpha_1=150$ and $\alpha_2=0.3$.  In our simulations, we set the increased conversion rate to be $\bar{\rho}_m=\{(0.35,0.4,0.4,0.35,0.15,0.15,0.35,0.35)\}$ and $\bar{\rho}_m=\{(0.35,0.4,0.4,0.45,0.15,0.15,0.35,0.45)\}$   for the second and third rows in Fig.\;\ref{process},   and set the affinity parameter to  be $\gamma=0$. We observe from the second and third rows of Fig.\;\ref{process} that a single cluster of heterochromatin is formed  which  implies  the inverted architecture.  Next we keep  the affinity between the nuclear and the nuclear envelope unchanged  at $\gamma=0.02$, and  increase the conversion rate $\rho_m$ for $m=1,2,\cdots, N$.    We observe from the fourth row of  Fig.\;\ref{process}  that the affinity between nuclear envelope and heterochromatin are present all the time, and   the heterochromatin  grows on each  chromosome territory gradually during architecture reorganization process.

The numerical simulations from Fig.\;\ref{process} indicate that  increase of 
heterochromatin  conversion rate and the absence of affinity between nuclear envelope and heterochromatin are sufficient for the formation of the inverted architecture during the nuclear architecture reorganization process, which are   with the previous results in \cite{lee2017new}.

\begin{figure}
\centering
\subfigure[$t=0$, $\gamma=0$.]{
\includegraphics[width=0.20\textwidth,clip==]
{lag_nuclear_affinity.pdf}}\hskip 0cm
\subfigure[$t=10$, $\gamma=0$.]{
\includegraphics[width=0.20\textwidth,clip==]{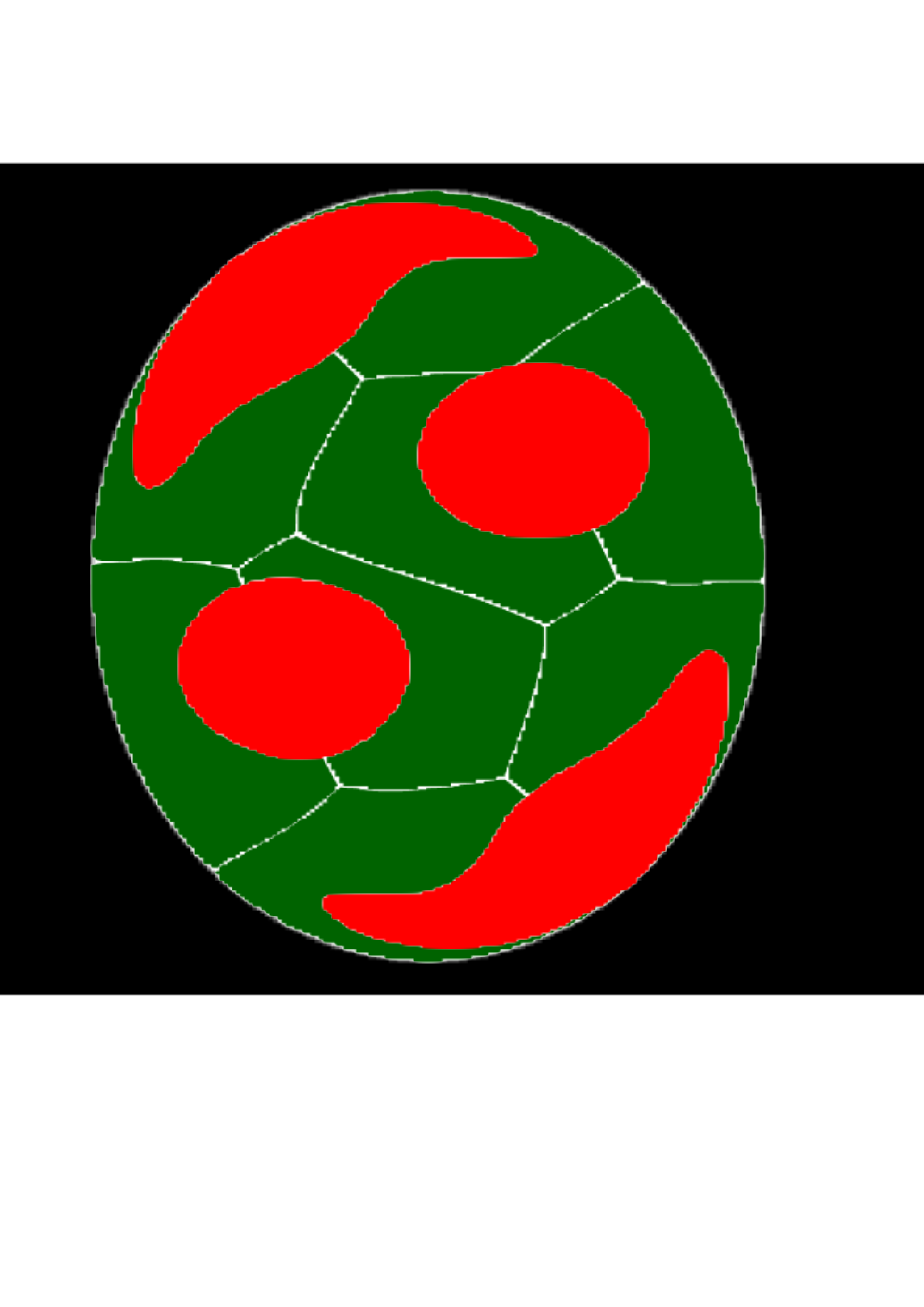}}
\subfigure[$t=20$, $\gamma=0$.]{
\includegraphics[width=0.20\textwidth,clip==]{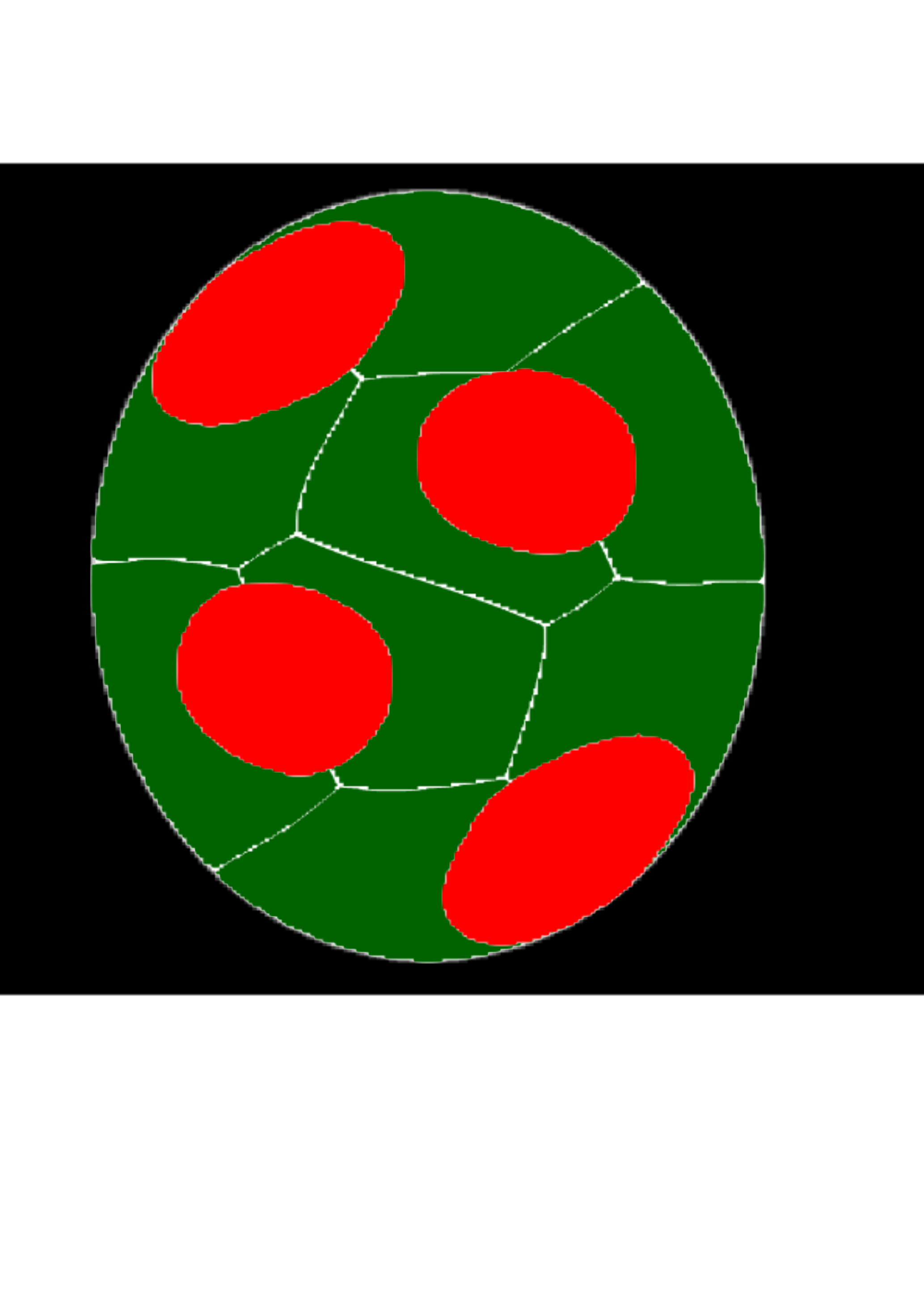}}\hskip 0cm
\subfigure[$t=50$, $\gamma=0$.]{
\includegraphics[width=0.20\textwidth,clip==]{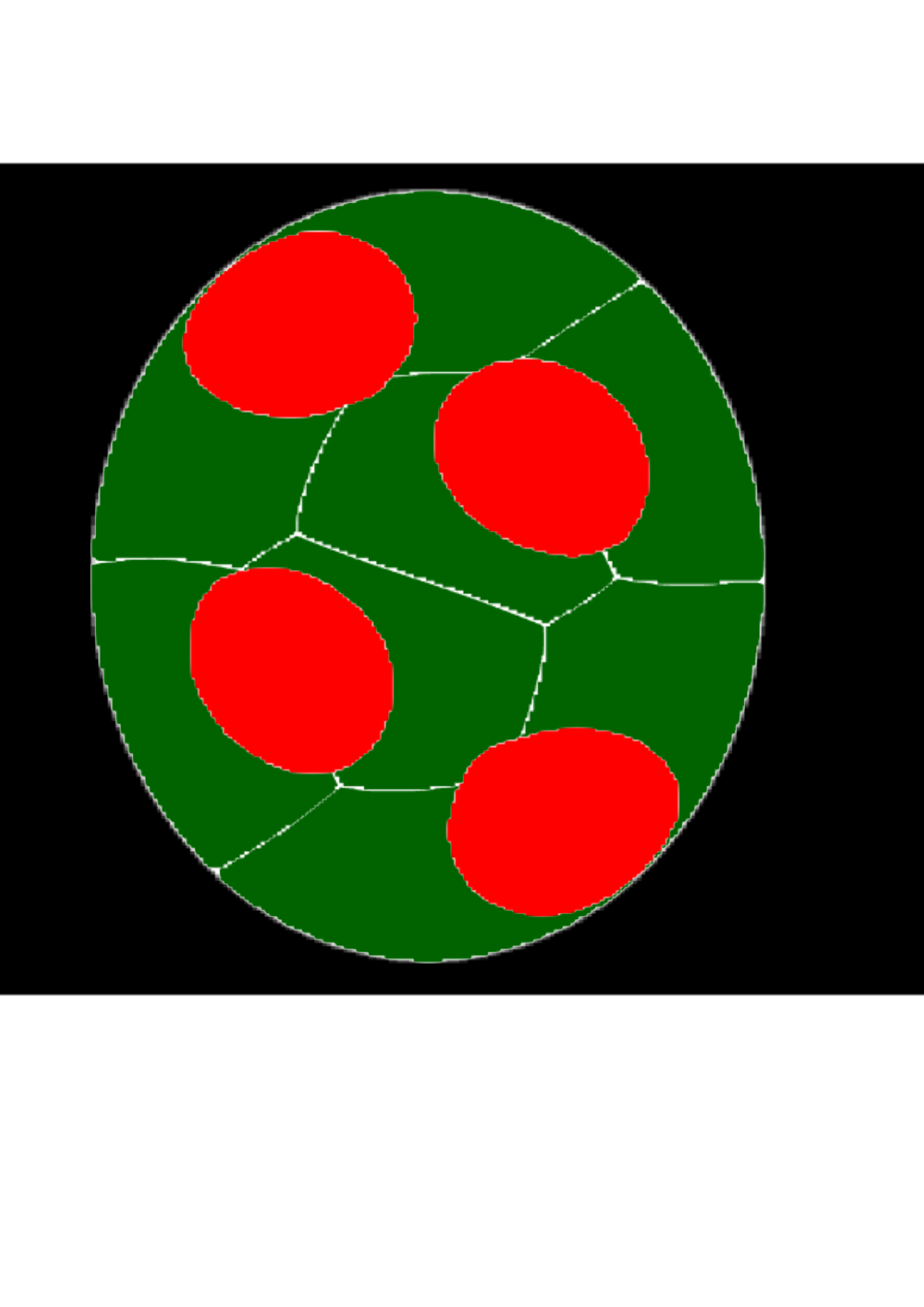}}
\subfigure[$t=0$, $\gamma=0$.]{
\includegraphics[width=0.20\textwidth,clip==]
{lag_nuclear_affinity.pdf}}\hskip 0cm
\subfigure[$t=10$, $\gamma=0$.]{
\includegraphics[width=0.20\textwidth,clip==]{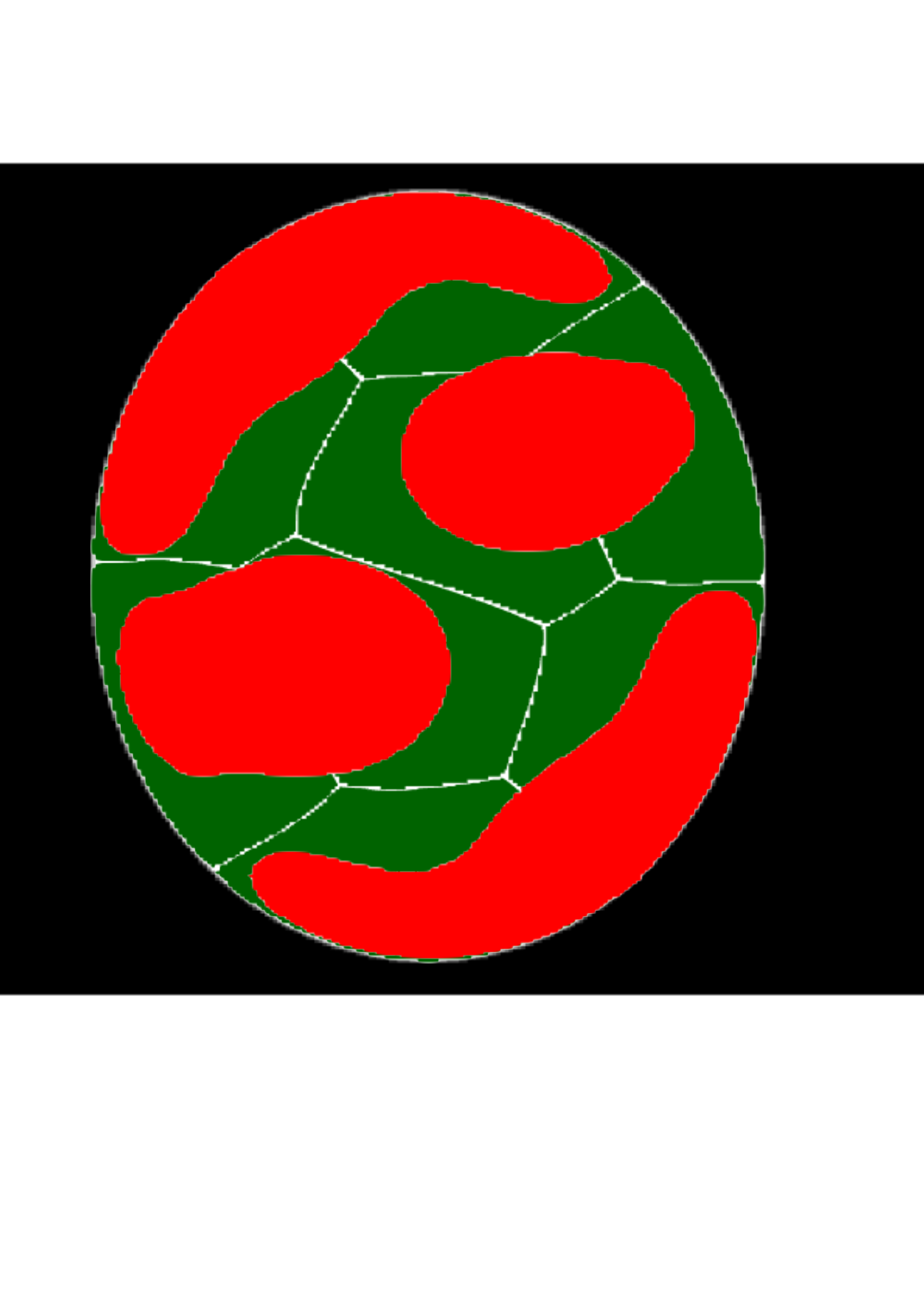}}
\subfigure[$t=20$, $\gamma=0$.]{
\includegraphics[width=0.20\textwidth,clip==]{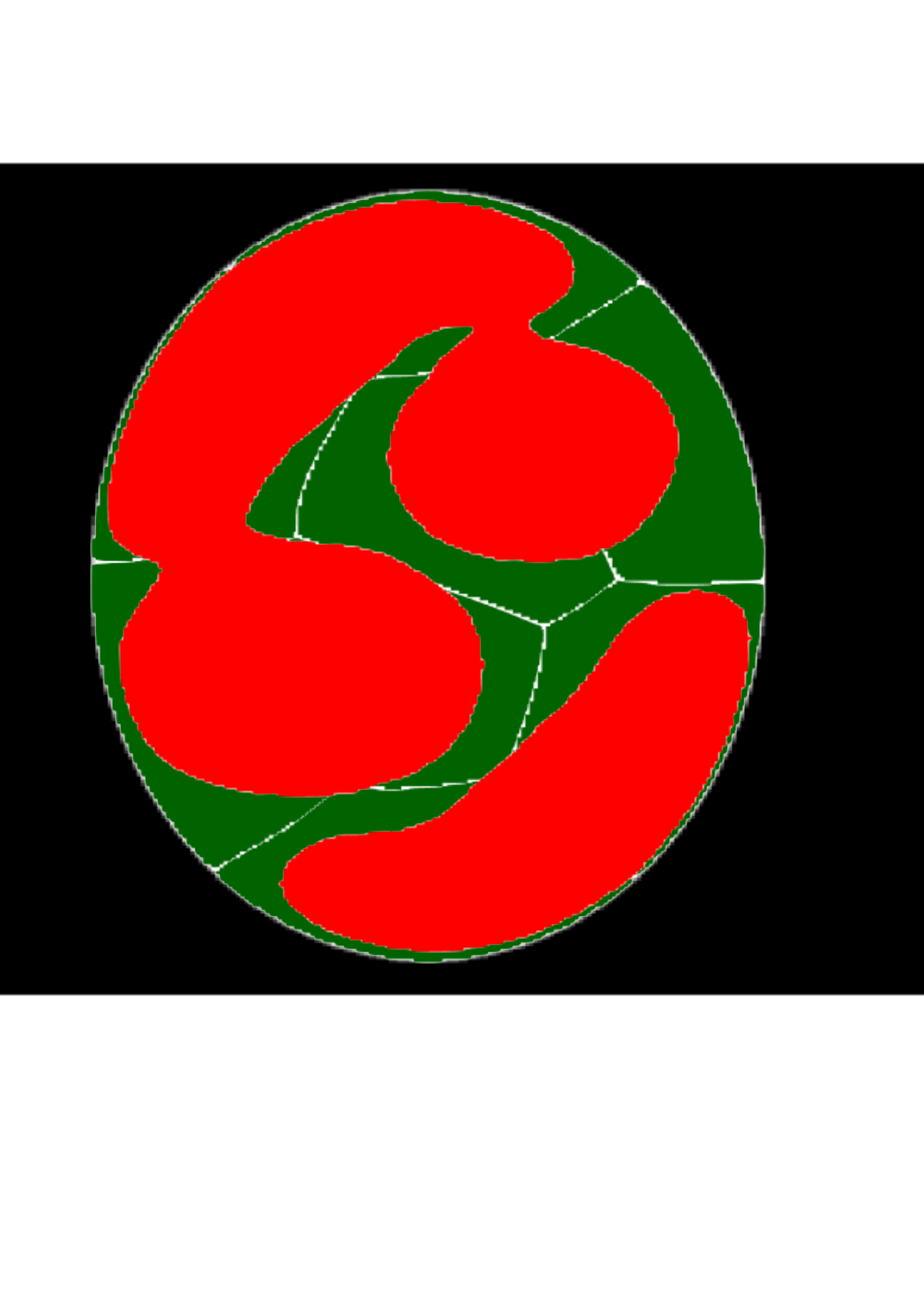}}\hskip 0cm
\subfigure[$t=50$, $\gamma=0$.]{
\includegraphics[width=0.20\textwidth,clip==]{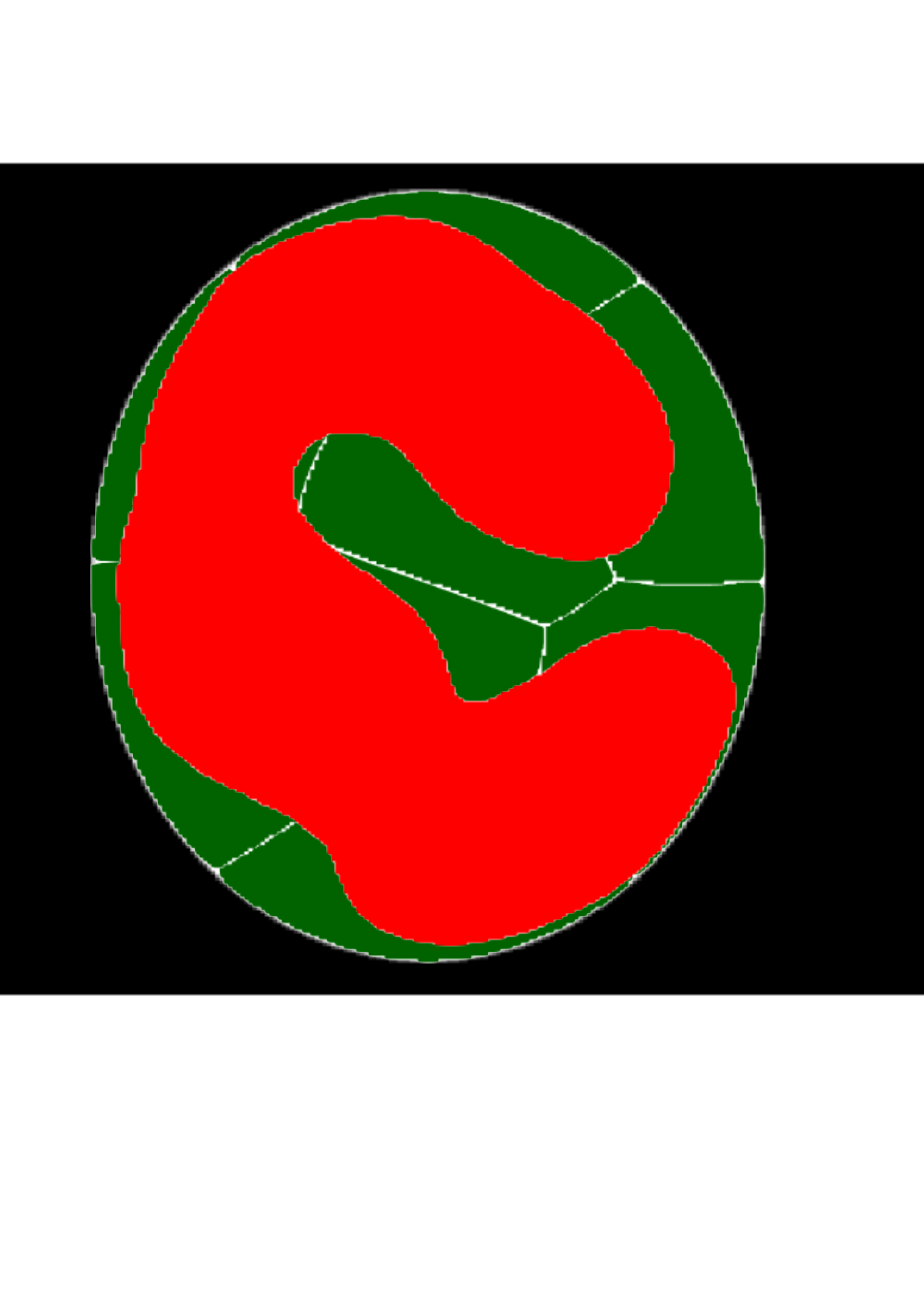}}
\subfigure[$t=0$, $\gamma=0$.]{
\includegraphics[width=0.20\textwidth,clip==]
{lag_nuclear_affinity.pdf}}\hskip 0cm
\subfigure[$t=5$, $\gamma=0$.]{
\includegraphics[width=0.20\textwidth,clip==]{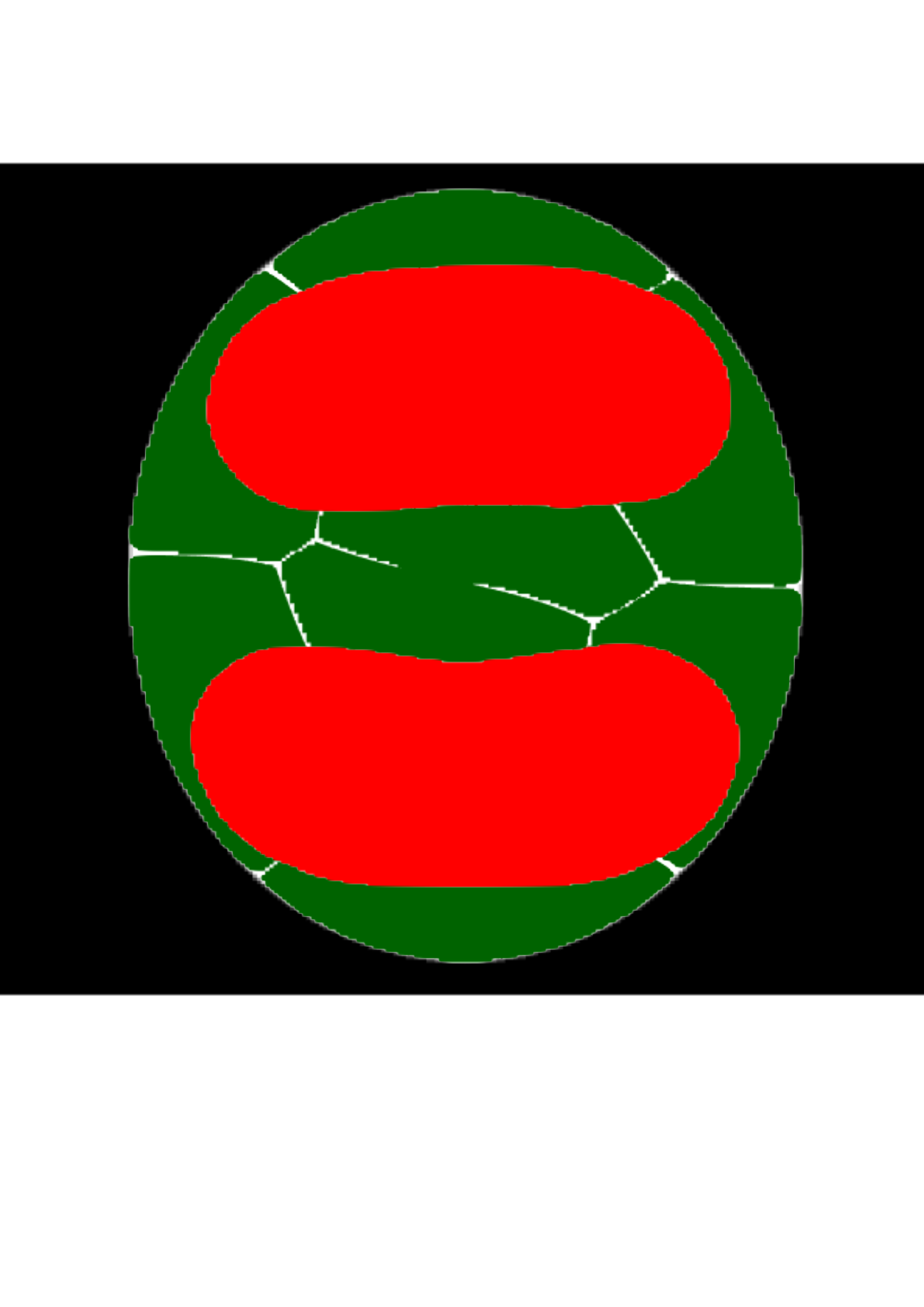}}
\subfigure[$t=10$, $\gamma=0$.]{
\includegraphics[width=0.20\textwidth,clip==]{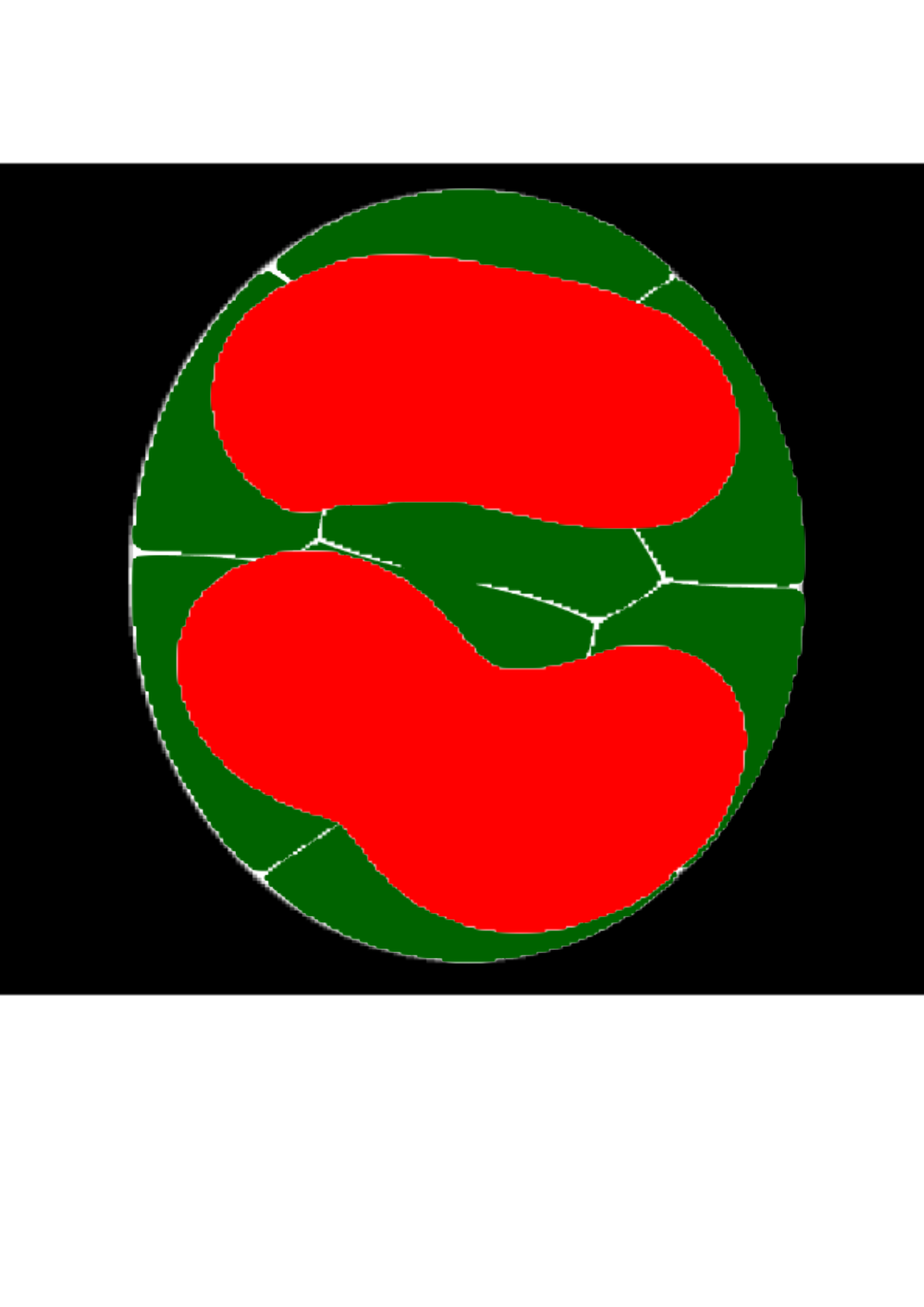}}\hskip 0cm
\subfigure[$t=50$, $\gamma=0$.]{
\includegraphics[width=0.20\textwidth,clip==]{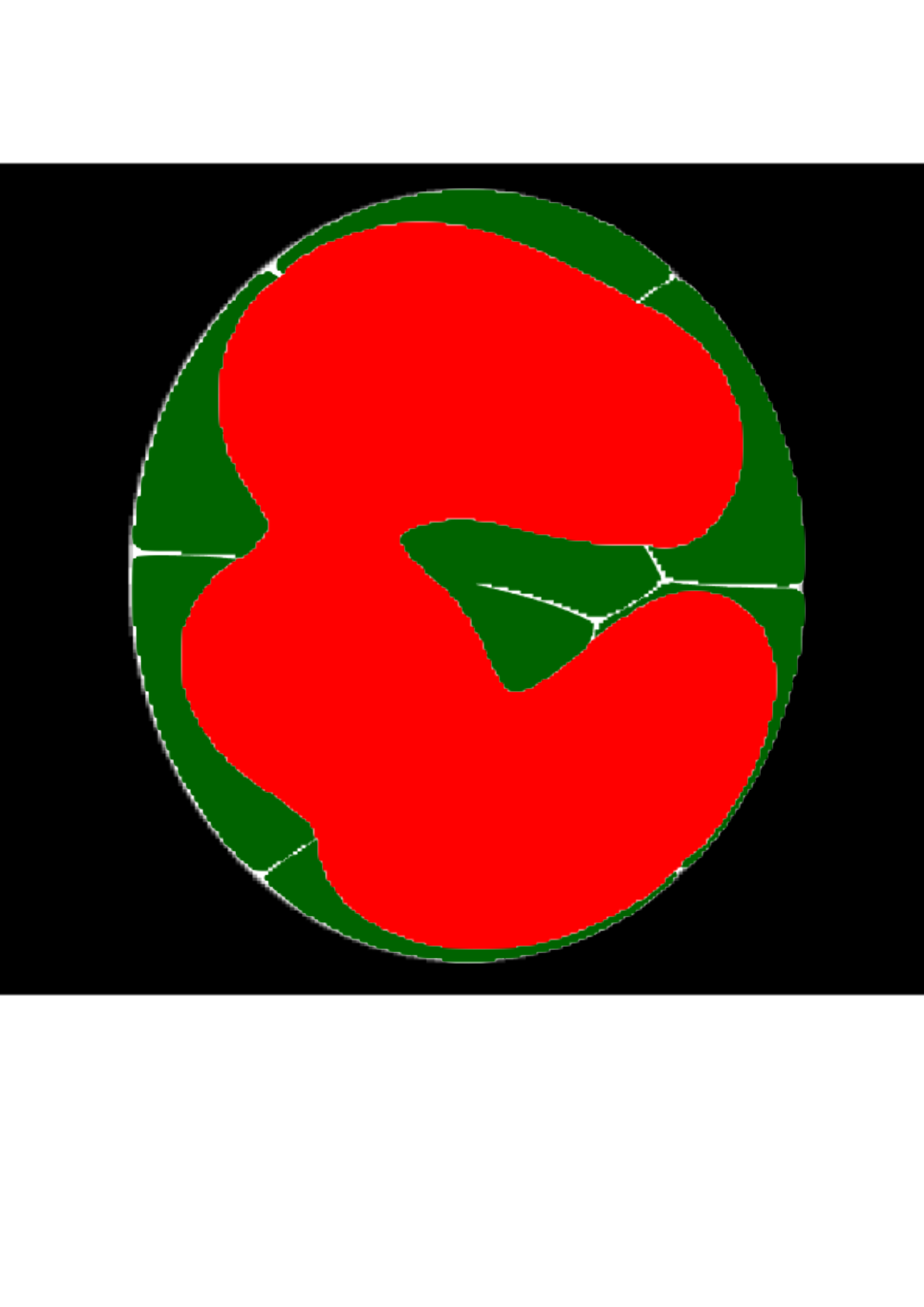}}
\subfigure[$t=0$, $\gamma=0.02$.]{
\includegraphics[width=0.20\textwidth,clip==]
{lag_nuclear_affinity.pdf}}\hskip 0cm
\subfigure[$t=10$, $\gamma=0.02$.]{
\includegraphics[width=0.20\textwidth,clip==]{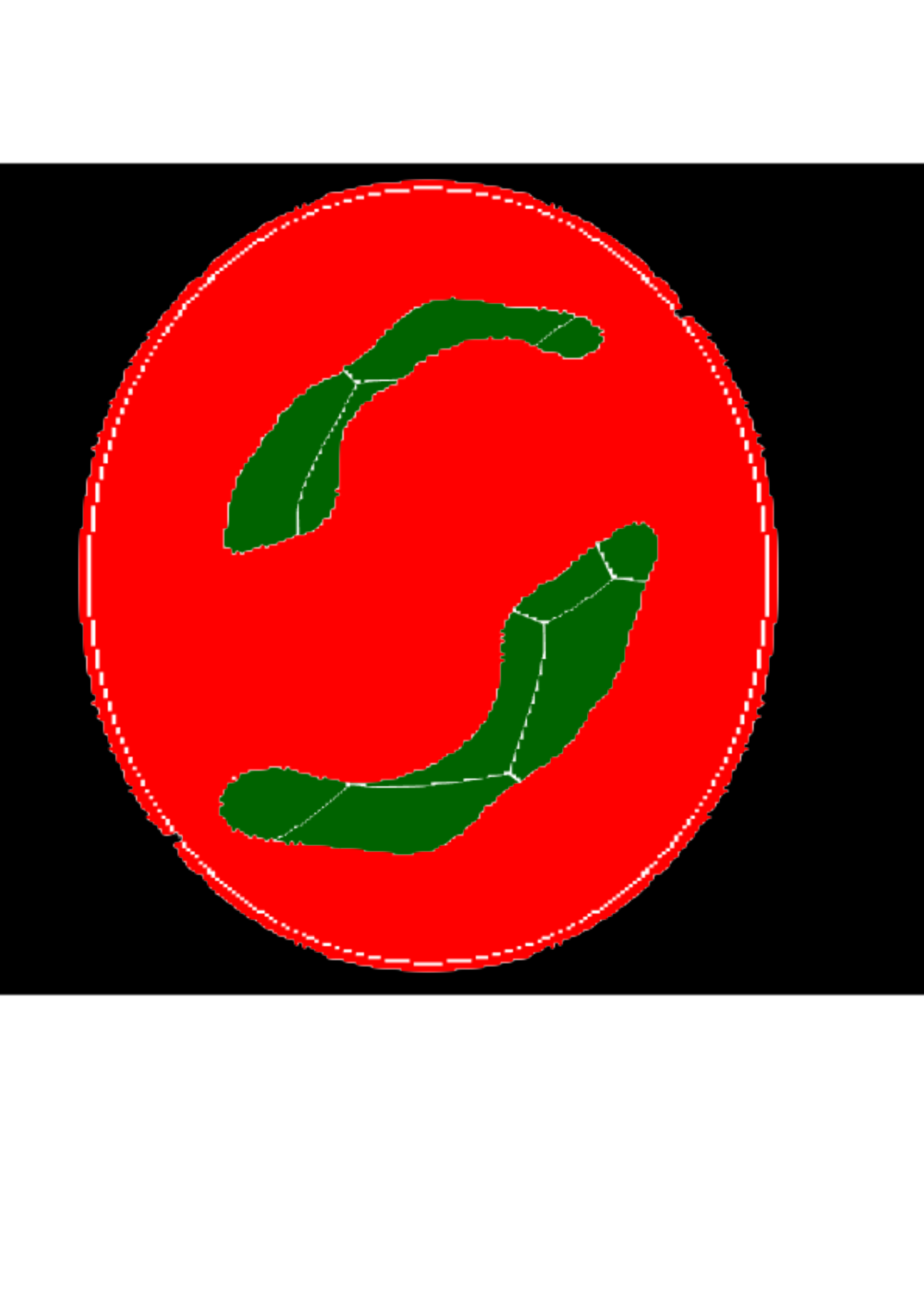}}
\caption{The parameters for nuclear reorganization process: $\beta_0=\frac 53$, $\beta_\phi=1$, $\beta_\psi=\frac 23$ with $\gamma=0$ for zero affinity.  Interface parameters are $\eps_\phi^2=0.001$ and  $\eps_{\psi}^2=0.005$.  $\bar{V}_m=\frac{\mbox{Nuclear Volume}}{N}$ and $\bar{v}_m=V_m\times 0.23$ where $m=8$  and time step $\delta t=10^{-2}$ with fixed nucleus. }\label{process}
\end{figure}

\subsection{Reduced nuclear size and the reorganization process}
In this subsection, we focus on the architecture reorganization process with reduced nuclear shape, and assess whether the nuclear shape is an  indispensable condition for the induction of a single cluster inverted architecture. 

We  introduce two sigmoid functions to describe the x-radius  and y-radius  of nuclear shape.
\begin{equation}
 r_x(t) =r_x(0) +\frac{\bar{r}_xt}{t + \alpha_1 e^{-\alpha_2 t}};\;  r_y(t) =r_y(0) +\frac{\bar{r}_yt}{t + \alpha_3 e^{-\alpha_4 t}},
 \end{equation}
where $\bar{r}_x$ and $\bar{r}_y$ are the decreasing rate of nuclear size and $\alpha_1$, $\alpha_2$, $\alpha_3$ and $\alpha_4$ are four positive constants which controls the decreasing scale with respect to time.  We consider that  the nucleus shape will decrease to be a circular or an  elliptical shape  with time evolution, and investigate how the nuclear size and shape influence the nuclear architecture reorganization process.  
The parameters of decreasing scale are $\alpha_1=\alpha_3=1$ and $\alpha_2=\alpha_4=0.01$. Numerical results  are computed by the linear scheme  
\eqref{stab:linear:scheme:1}-\eqref{stab:linear:scheme:5} with $\delta t=10^{-4}$.  We also increase the volume of each chromosome $V_m$ and volume of heterochromatin in each chromosome $v_m$ with time.  Snapshots  at $t=0, 0.05, 1, 5$ are depicted for different nuclear pattern in Fig.\;\ref{decrease_1}. It is observed from Fig.\;\ref{decrease_1} that both circular or elliptical shape will eventually achieve the one cluster inverted architecture. The second row of  Fig.\;\ref{decrease_1} displays  chromosome territories of the first row. So the nuclear size or shape are not indispensable condition for the nuclear architecture reorganization process.

\begin{figure}
\centering
\subfigure[$t=0$.]{
\includegraphics[width=0.22\textwidth,clip==]
{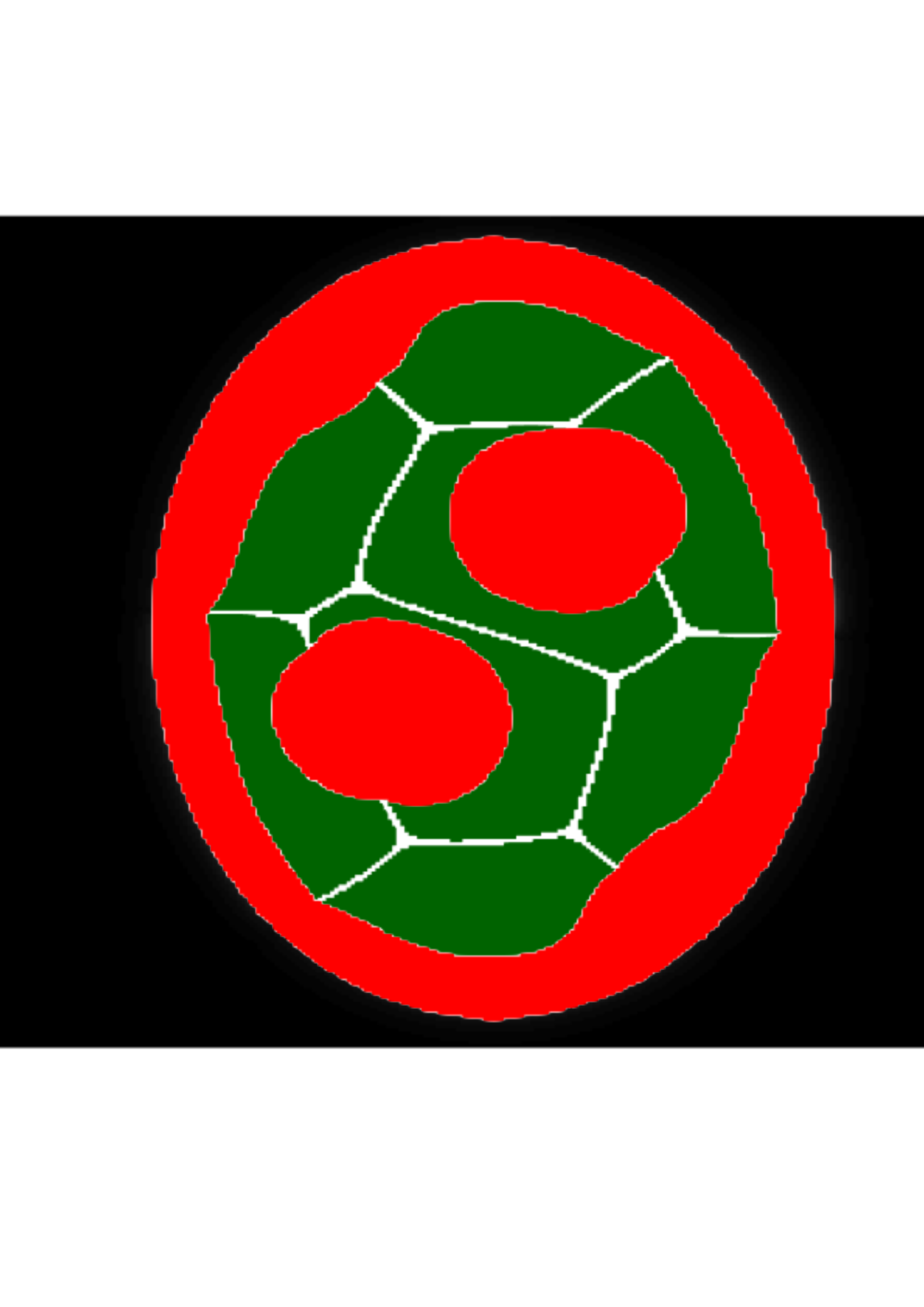}}\hskip 0cm
\subfigure[$t=0.05$.]{
\includegraphics[width=0.22\textwidth,clip==]{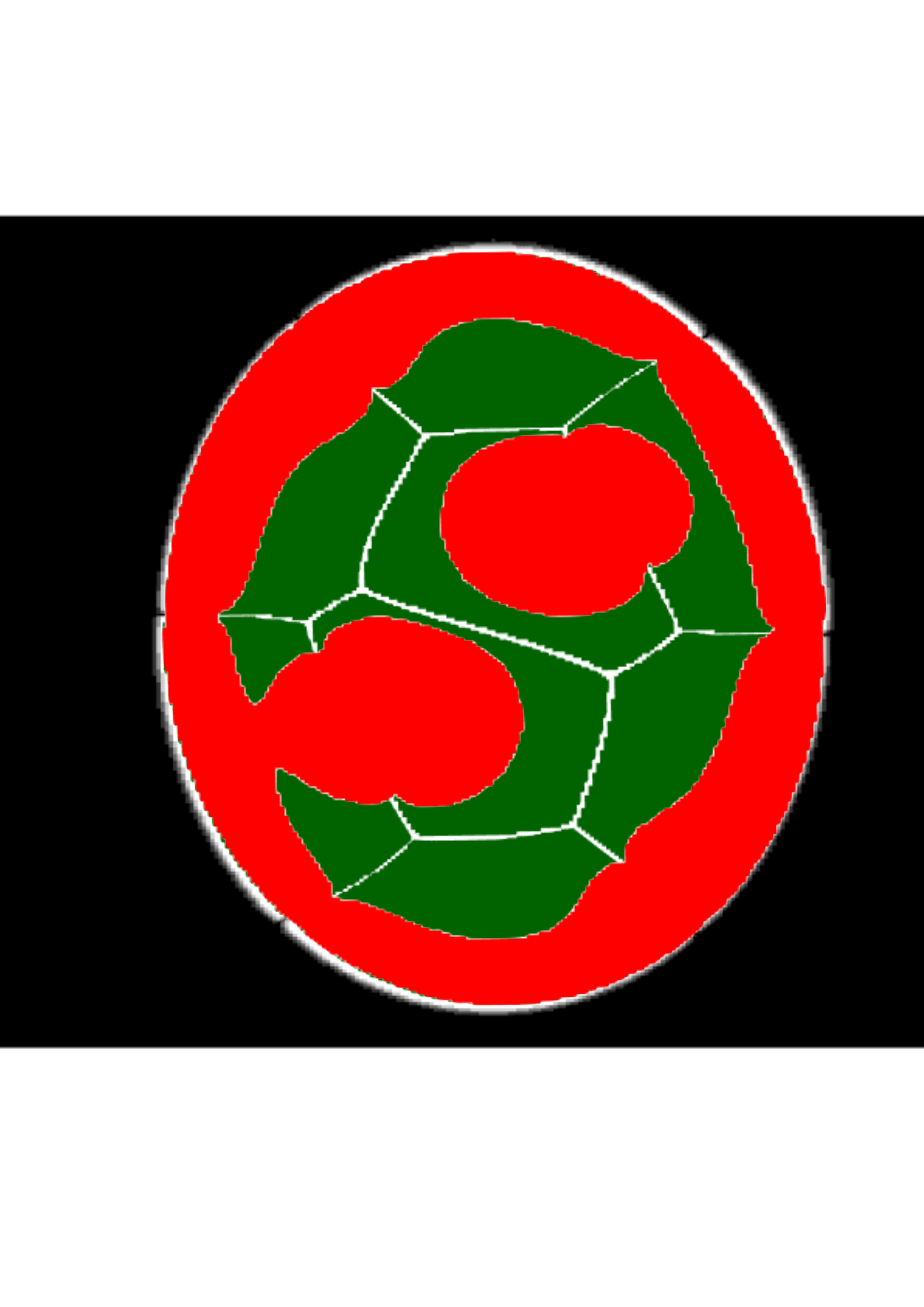}}
\subfigure[$t=1$.]{
\includegraphics[width=0.22\textwidth,clip==]{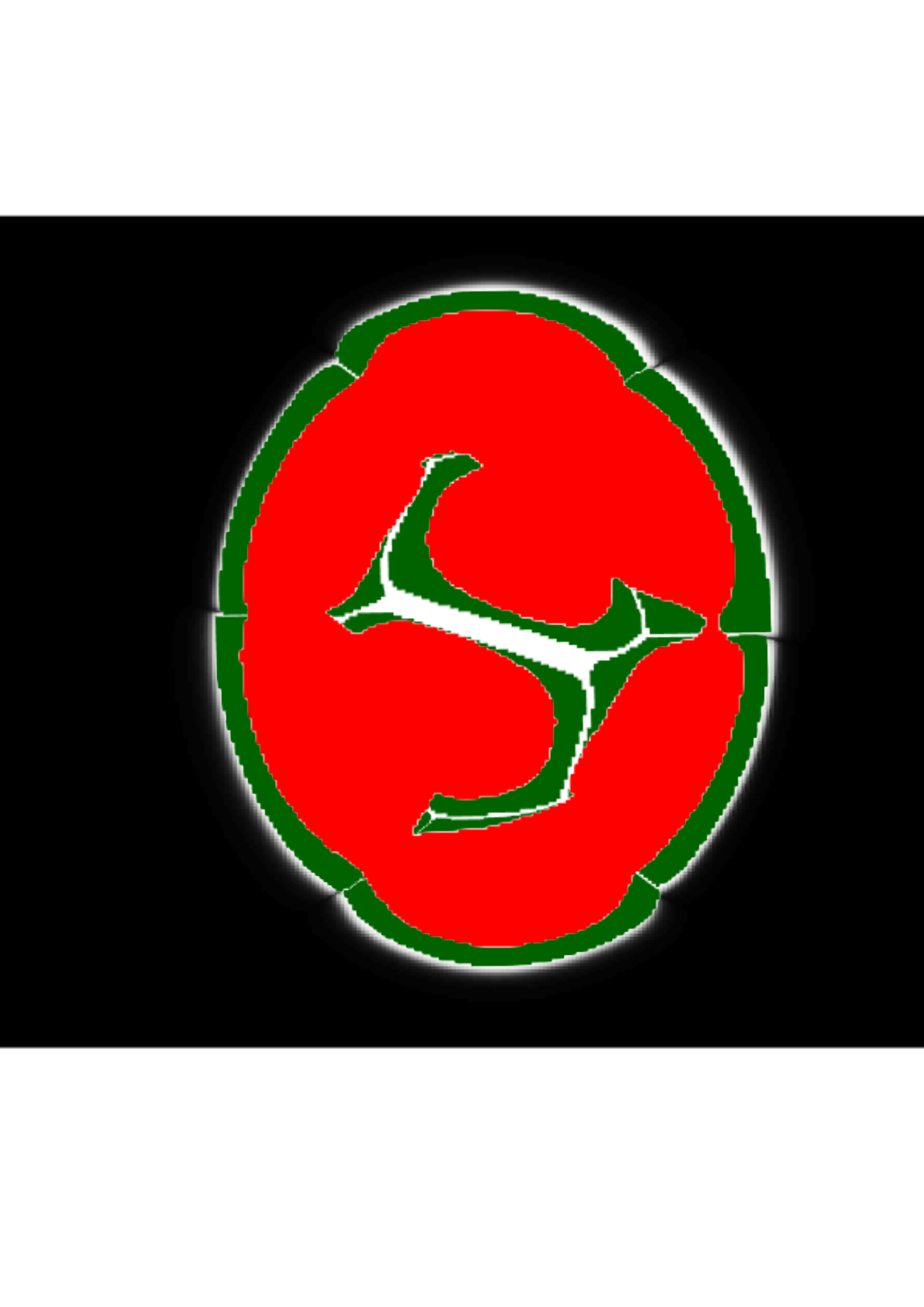}}
\subfigure[$t=5$.]{
\includegraphics[width=0.22\textwidth,clip==]{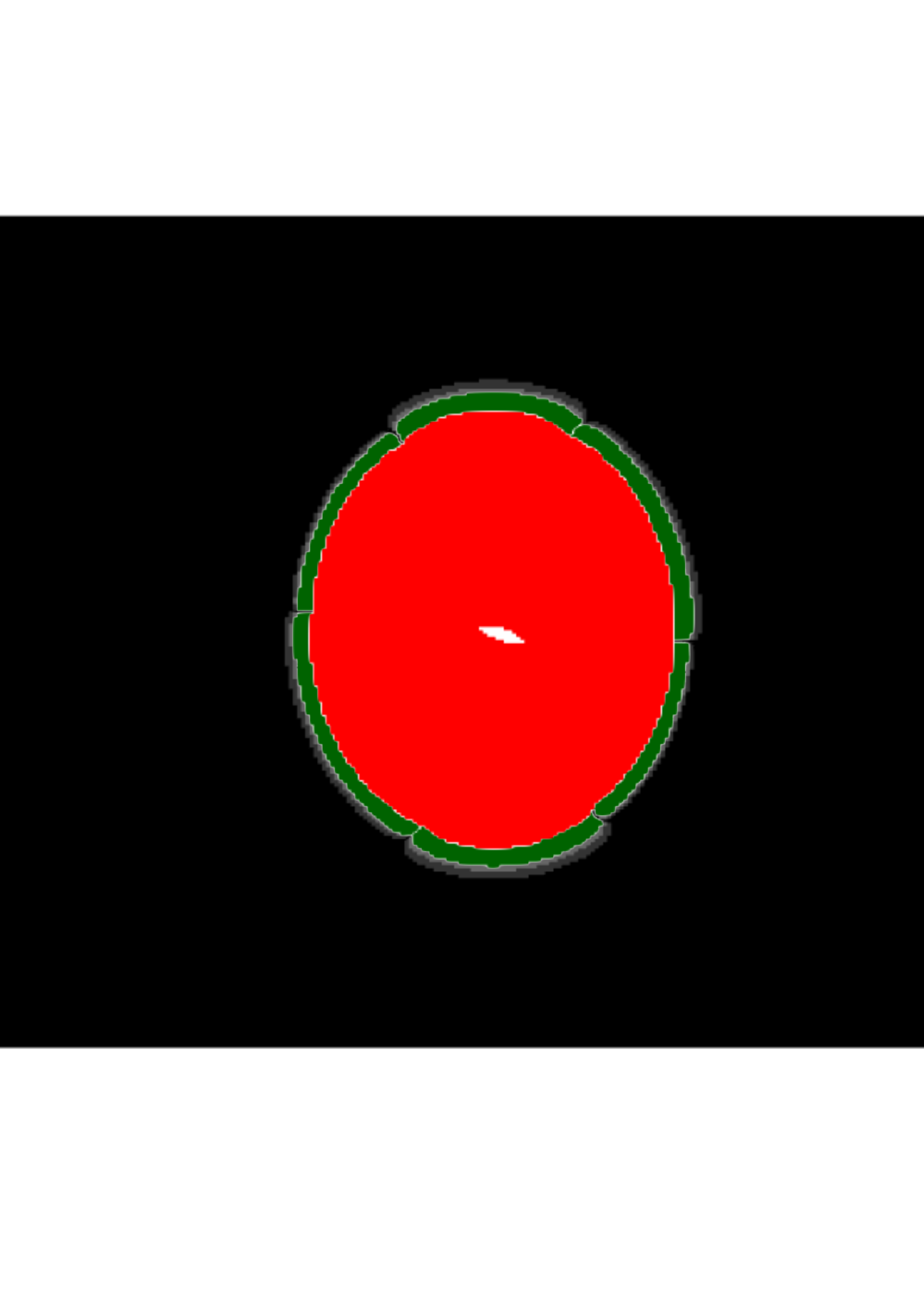}}\hskip 0cm
\subfigure[$t=0$.]{
\includegraphics[width=0.22\textwidth,clip==]
{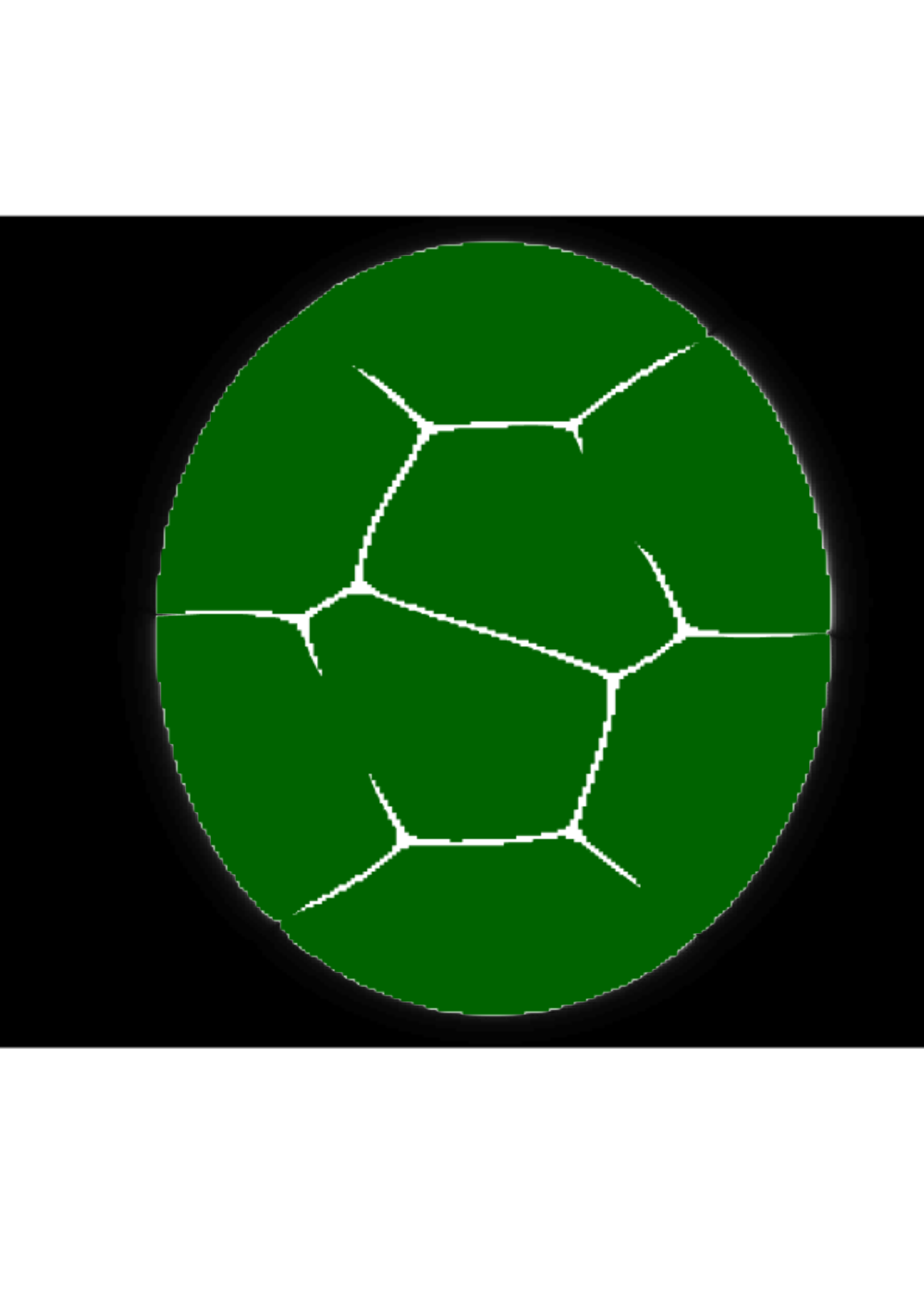}}\hskip 0cm
\subfigure[$t=0.05$.]{
\includegraphics[width=0.22\textwidth,clip==]{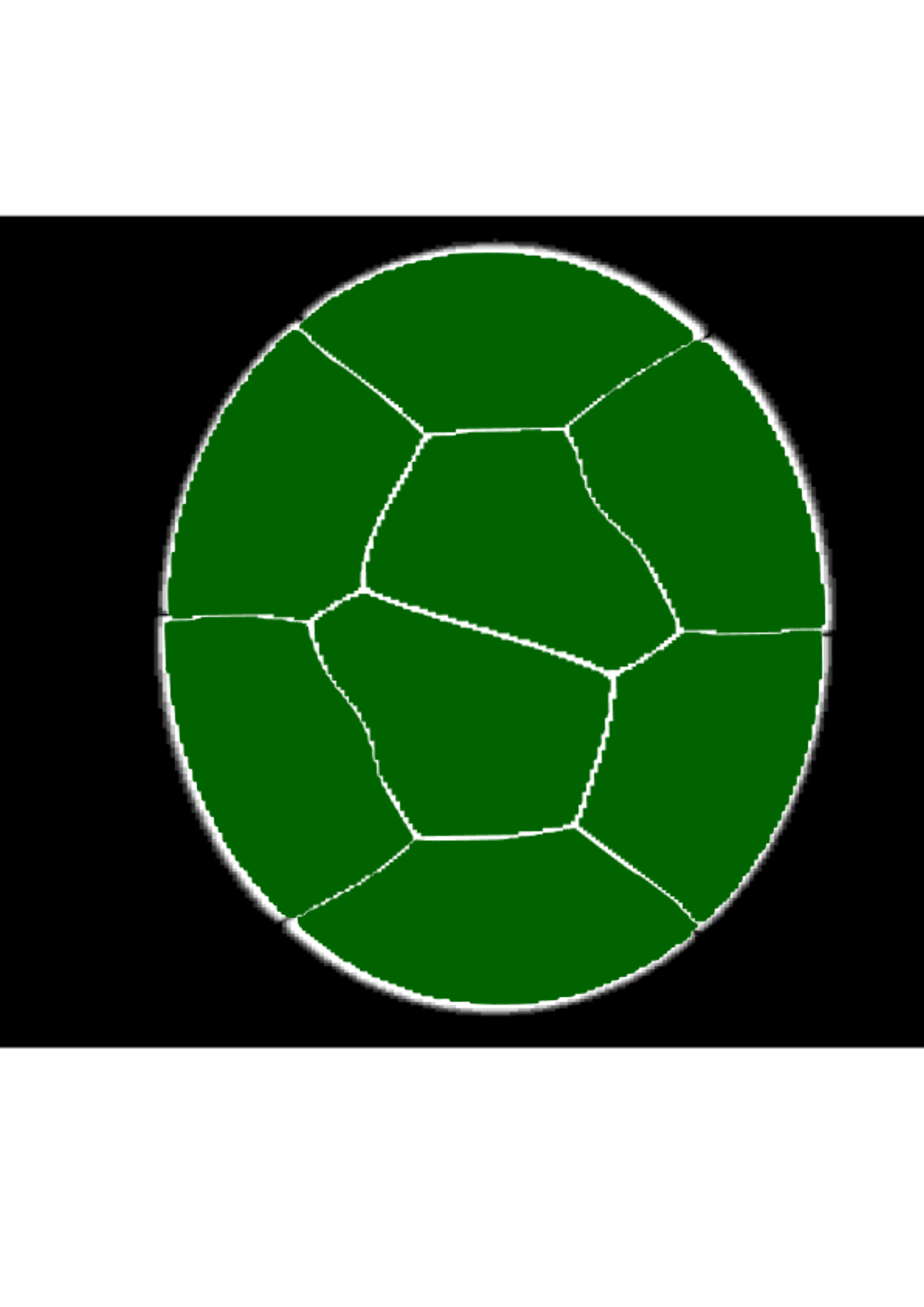}}
\subfigure[$t=1$.]{
\includegraphics[width=0.22\textwidth,clip==]{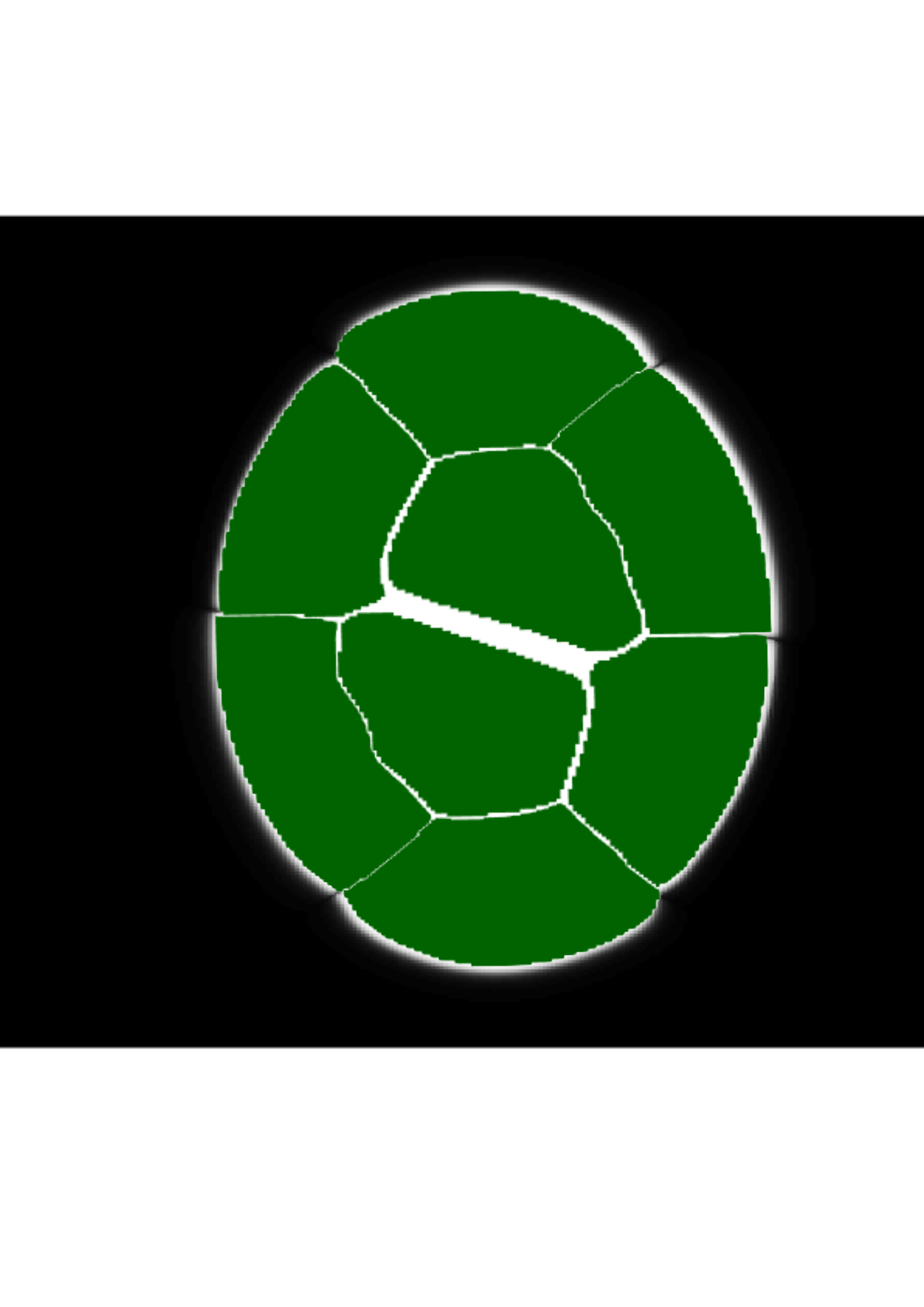}}
\subfigure[$t=5$.]{
\includegraphics[width=0.22\textwidth,clip==]{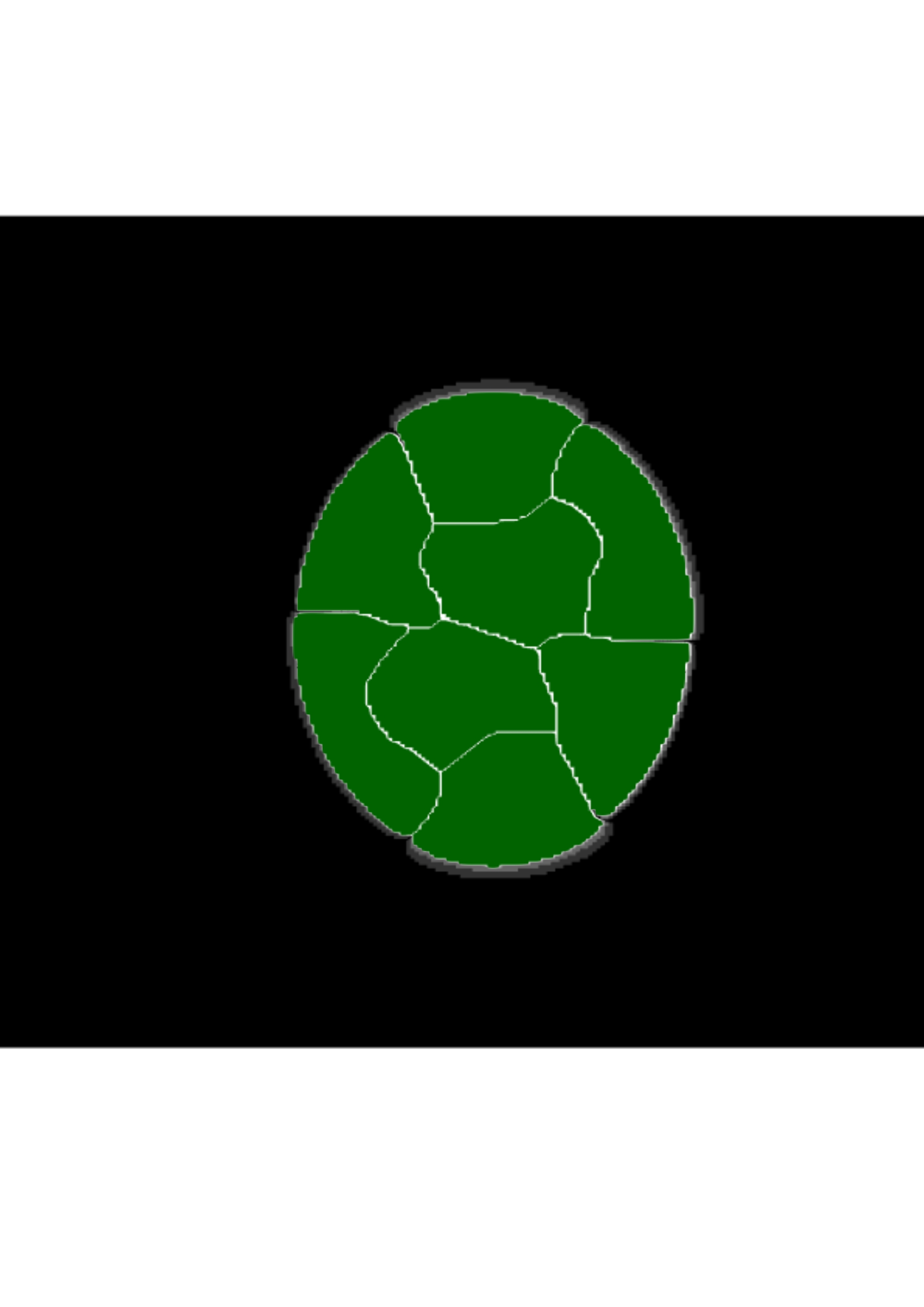}}\hskip 0cm
\subfigure[$t=0$.]{
\includegraphics[width=0.235\textwidth,clip==]
{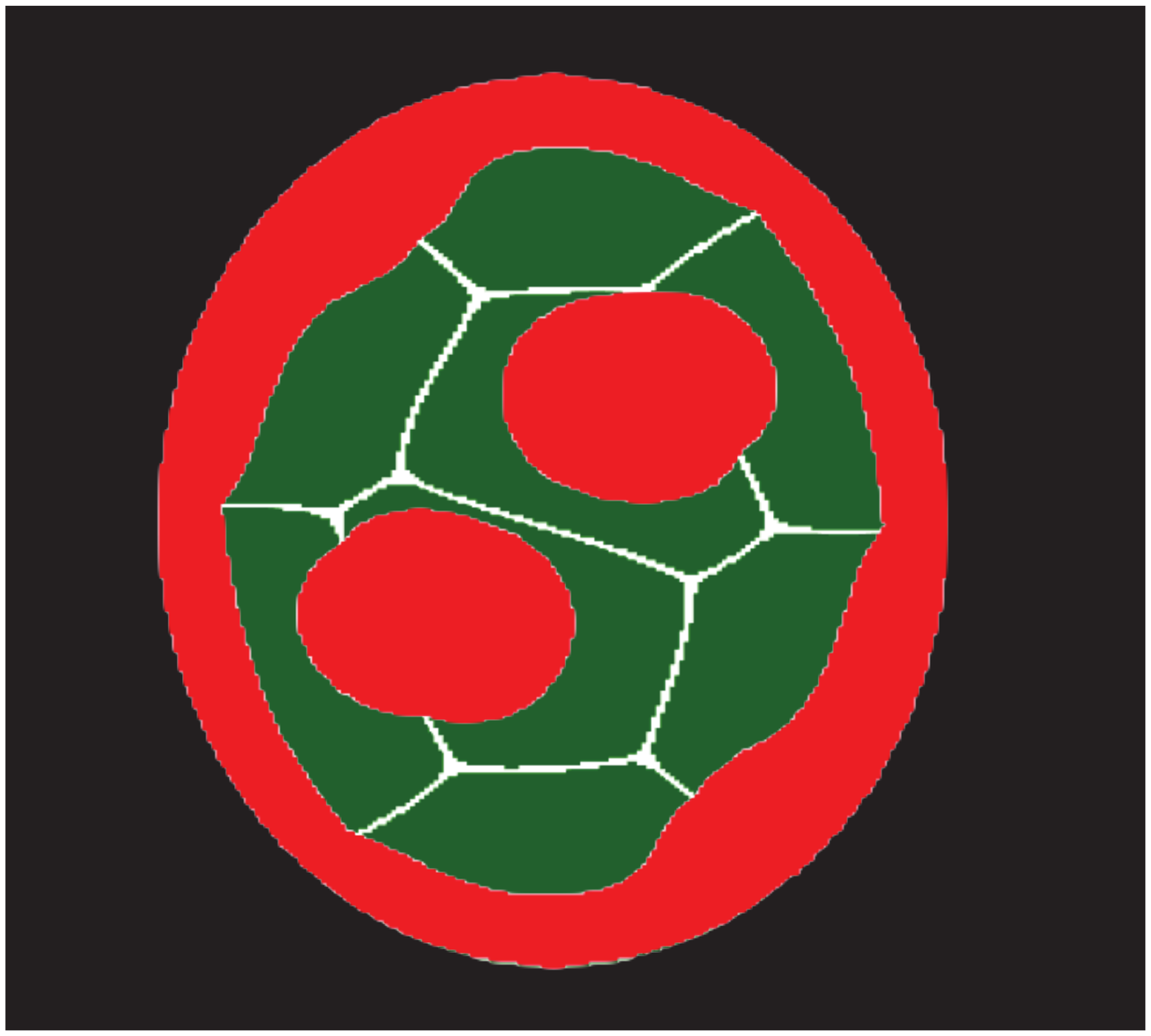}}\hskip 0cm
\subfigure[$t=0.05$.]{
\includegraphics[width=0.235\textwidth,clip==]{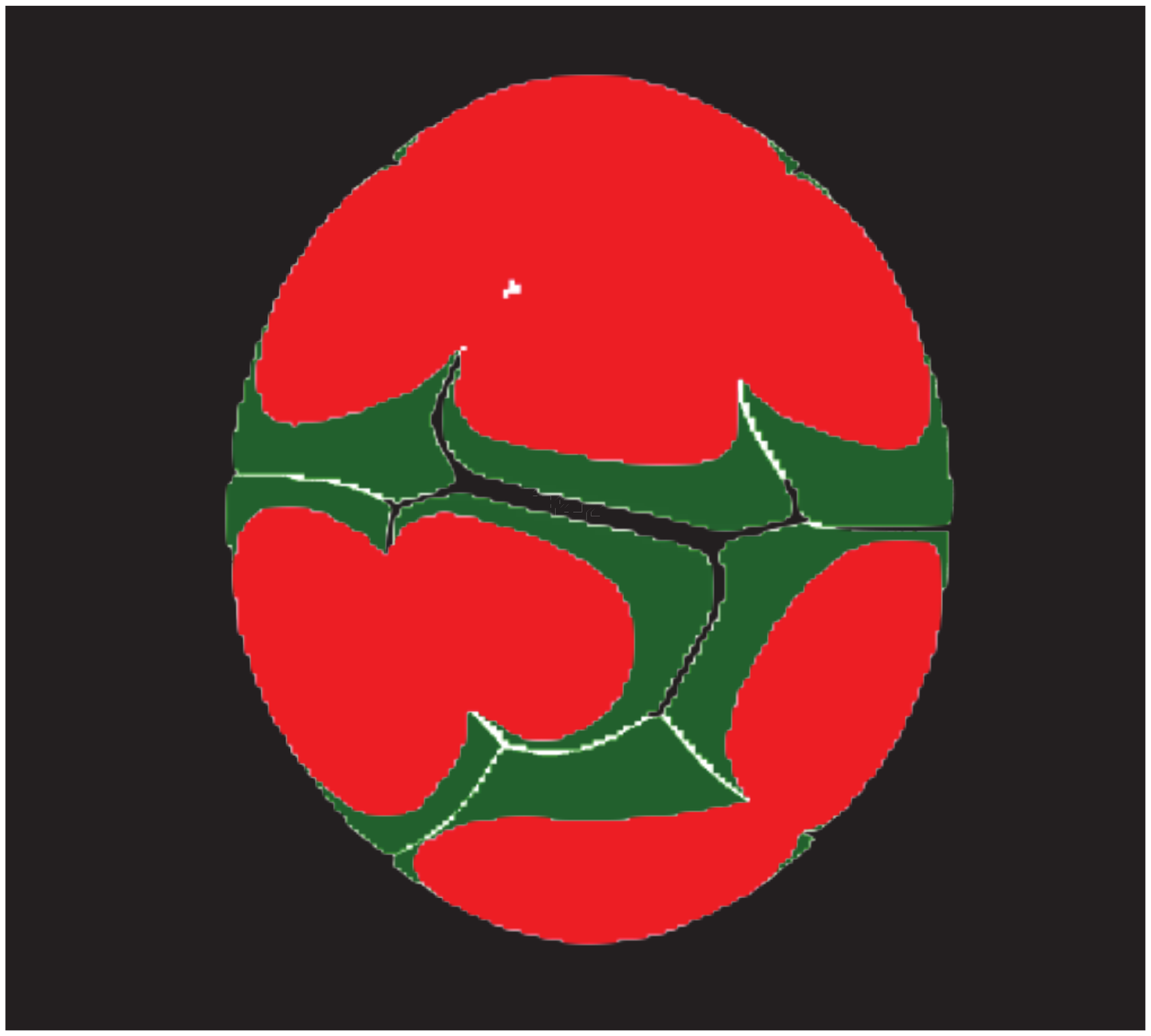}}
\subfigure[$t=1$.]{
\includegraphics[width=0.235\textwidth,clip==]{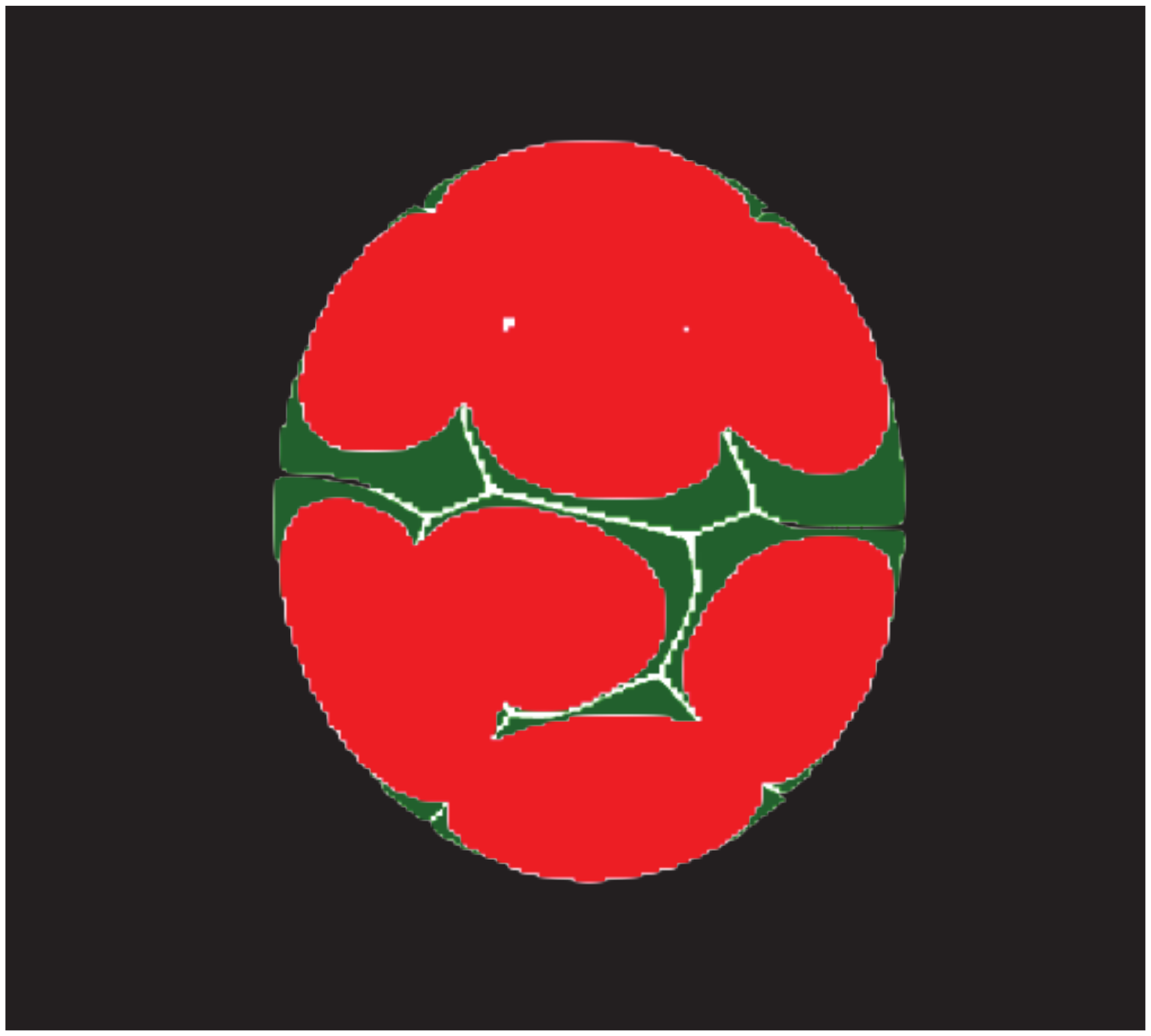}}
\subfigure[$t=5$.]{
\includegraphics[width=0.235\textwidth,clip==]{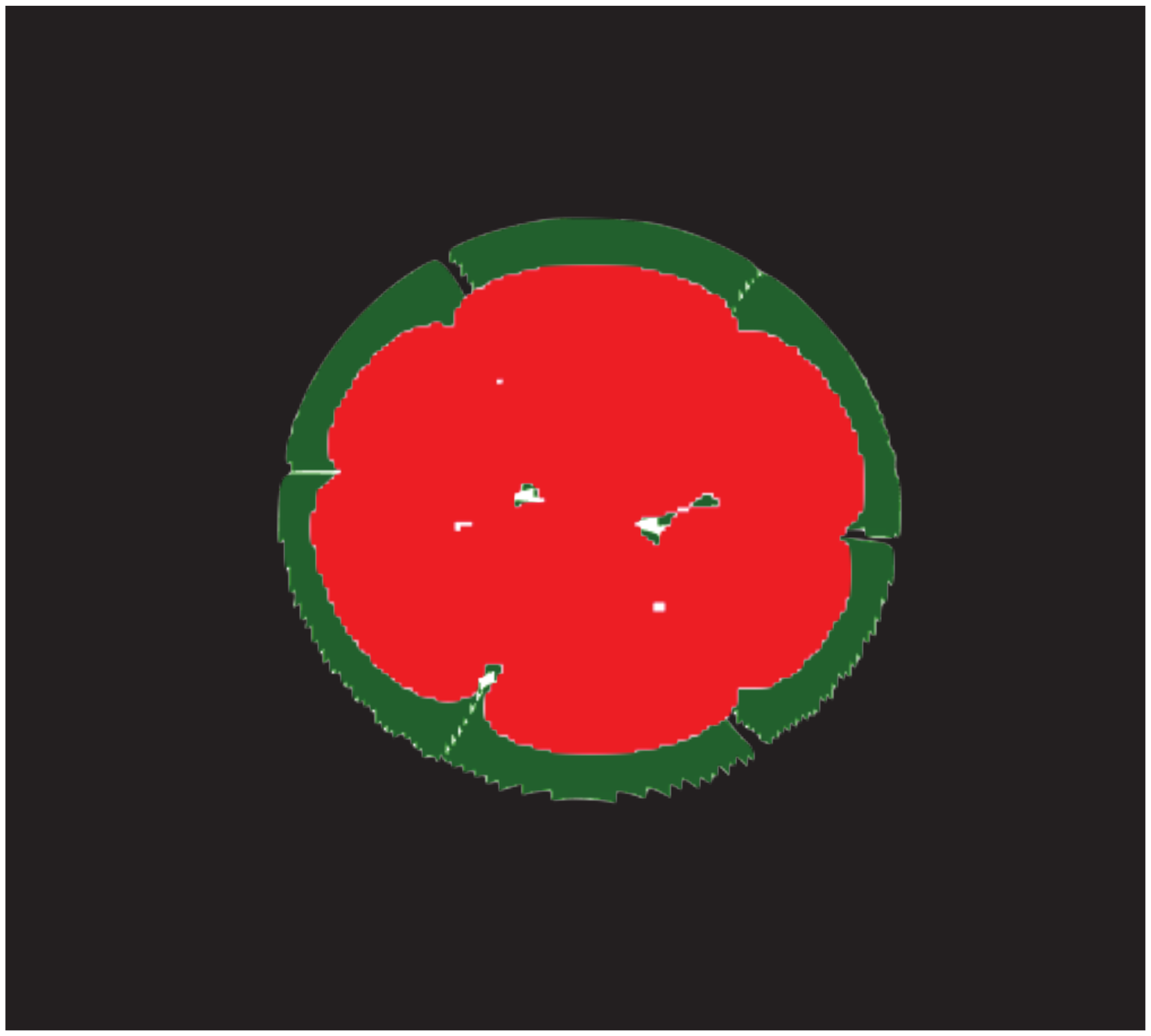}}\hskip 0cm
\caption{Decreased  Nucleus with parameters $(\beta_0,\beta_\phi,\beta_\psi)=(\frac{50}{3},50,\frac{50}{3})$. Interface width are $\eps^2_\phi=0.01$ and $\eps^2_\psi=0.05$.  $\bar{V}_m=$ Nuclear volume/$m$ and $\bar{v}_m=V_m\times [0.23]$, where $m=8$.}\label{decrease_1}
\end{figure}

\subsection{Inverted architecture reorganization for human beings}
In the previous subsections, we only considered $8$ chromosomes for drosophila,  and find that the deformation of nuclear size and shape are not sufficient conditions  for the nuclear architecture conversion. The absence of both LBR and lamin A/C expression $\gamma=0$  and the increase of heterochromatin rate are indispensable for inverted nuclear architecture.  Now we explore  the nuclear architecture with $46$  chromosomes for human beings. We also compute the numerical results by using the linear scheme \eqref{linear:scheme:1}-\eqref{linear:scheme:6} with $\delta t=10^{-3}$ and  examine the  effect of affinity in Fig.\;\ref{nuclear46_1}. It is observed from Fig.\;\ref{nuclear46_1} that  heterochromatin is shown to be distributed along the nuclear envelope with $\gamma=0.02$ with $46$ chromosome. However the heterochromatin accumulates at the territories between chromosome instead of in the region of nuclear envelope with $\gamma=0$. In Fig.\;\ref{nuclear46_2},  we decrease nuclear shape  and eliminate the  affinity of nuclear envelope, while increasing the heterochromatin conversion rate, and we observe that  one cluster  inverted architecture is formed at $t=2$.

\begin{figure}
\centering
\subfigure[$t=0$.]{
\includegraphics[width=0.3\textwidth,clip==]
{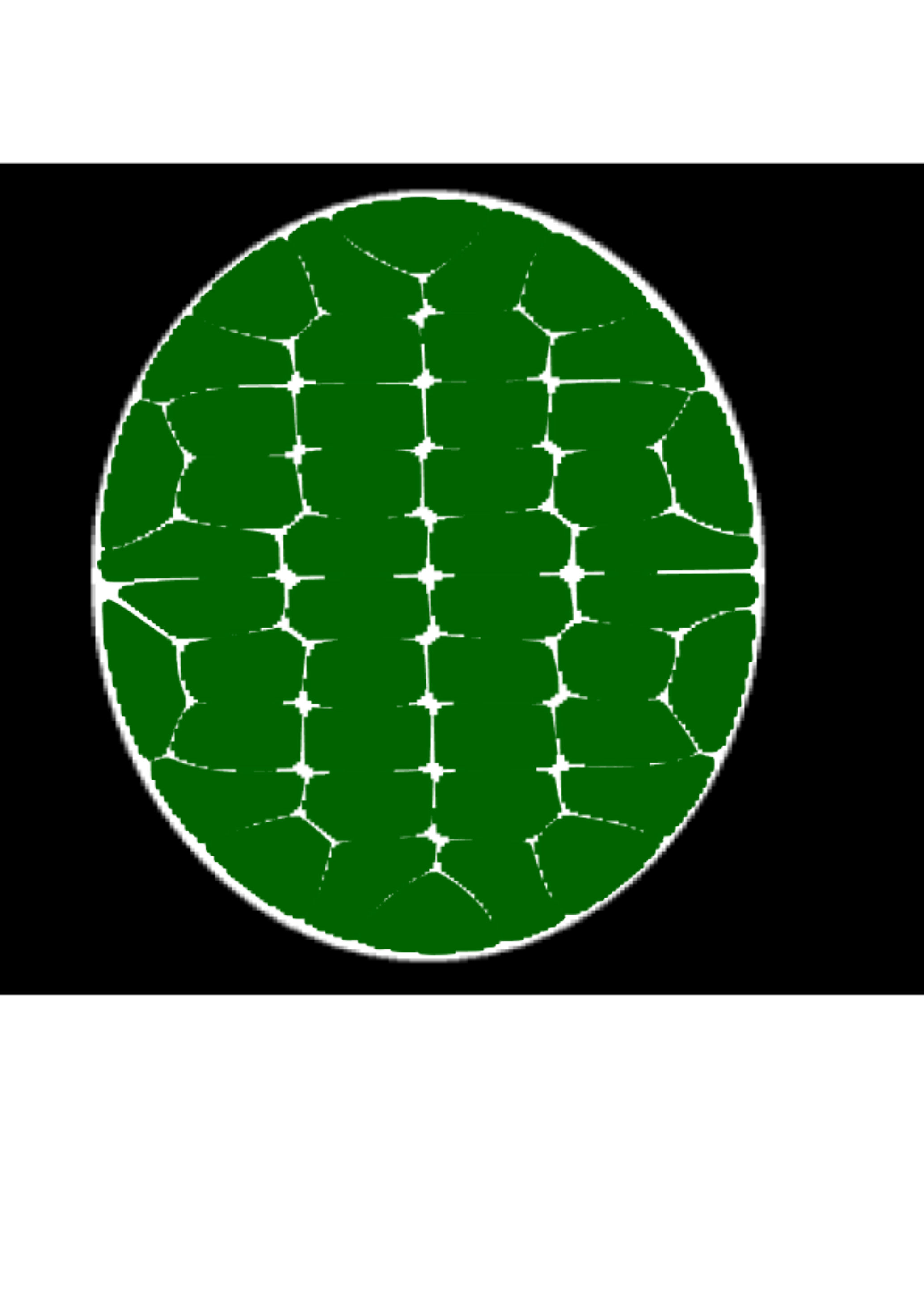}}\hskip 0cm
\subfigure[$t=5$.]{
\includegraphics[width=0.3\textwidth,clip==]{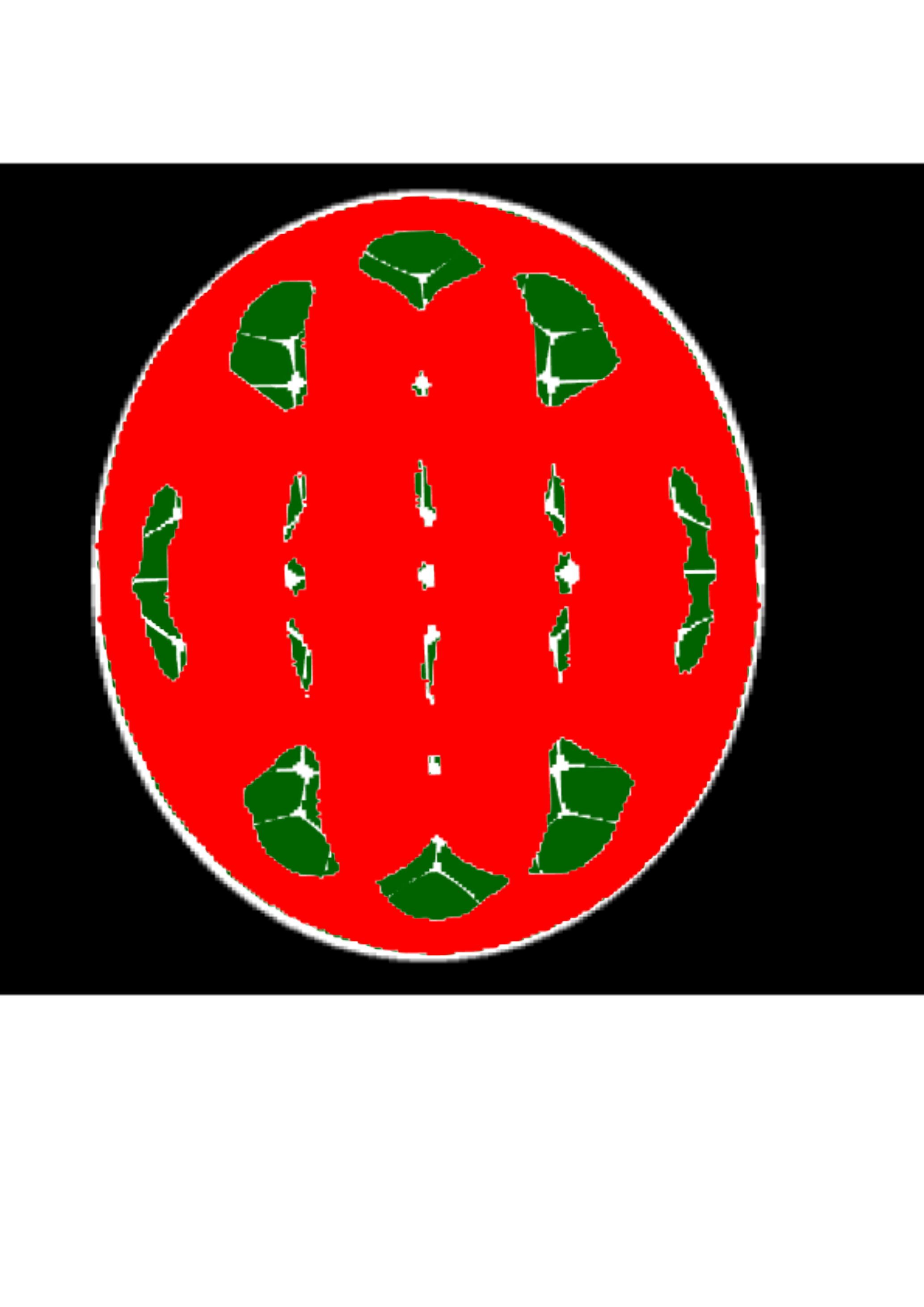}}
\subfigure[$t=5$.]{
\includegraphics[width=0.3\textwidth,clip==]{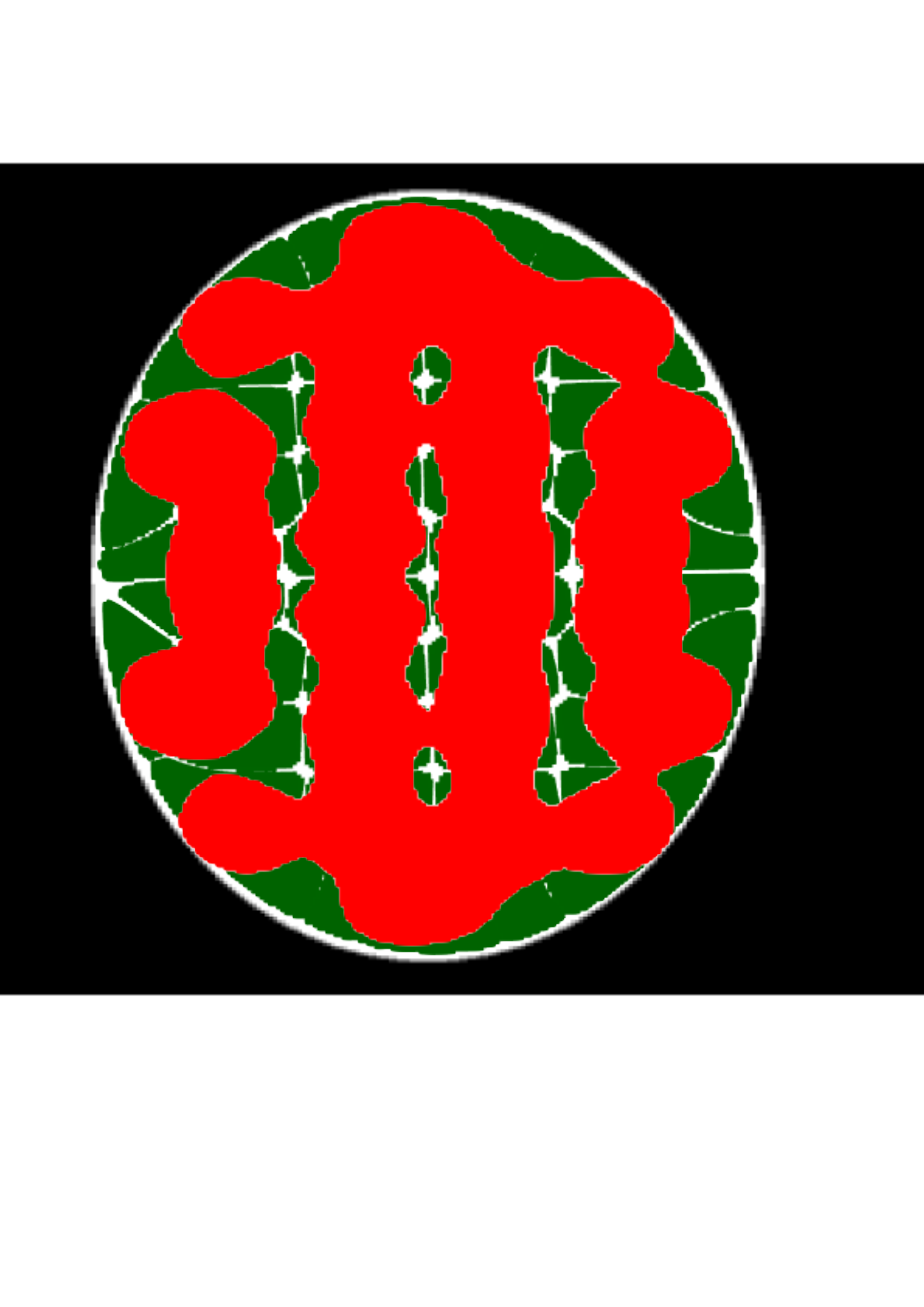}}
\caption{Fixed  Nucleus with parameters $(\beta_0,\beta_\phi,\beta_\psi)=(\frac{5}{3},1,\frac 23)$ and $\gamma=(0,0.02)$ where $m=46$. Interface width $\eps_\phi^2=0.01$ and $\eps^2_\psi=0.01$.}\label{nuclear46_1}
\end{figure}

\begin{figure}
\centering
\subfigure[$t=0$.]{
\includegraphics[width=0.25\textwidth,clip==]
{bc46.pdf}}\hskip 0cm
\subfigure[$t=2$.]{
\includegraphics[width=0.30\textwidth,clip==]{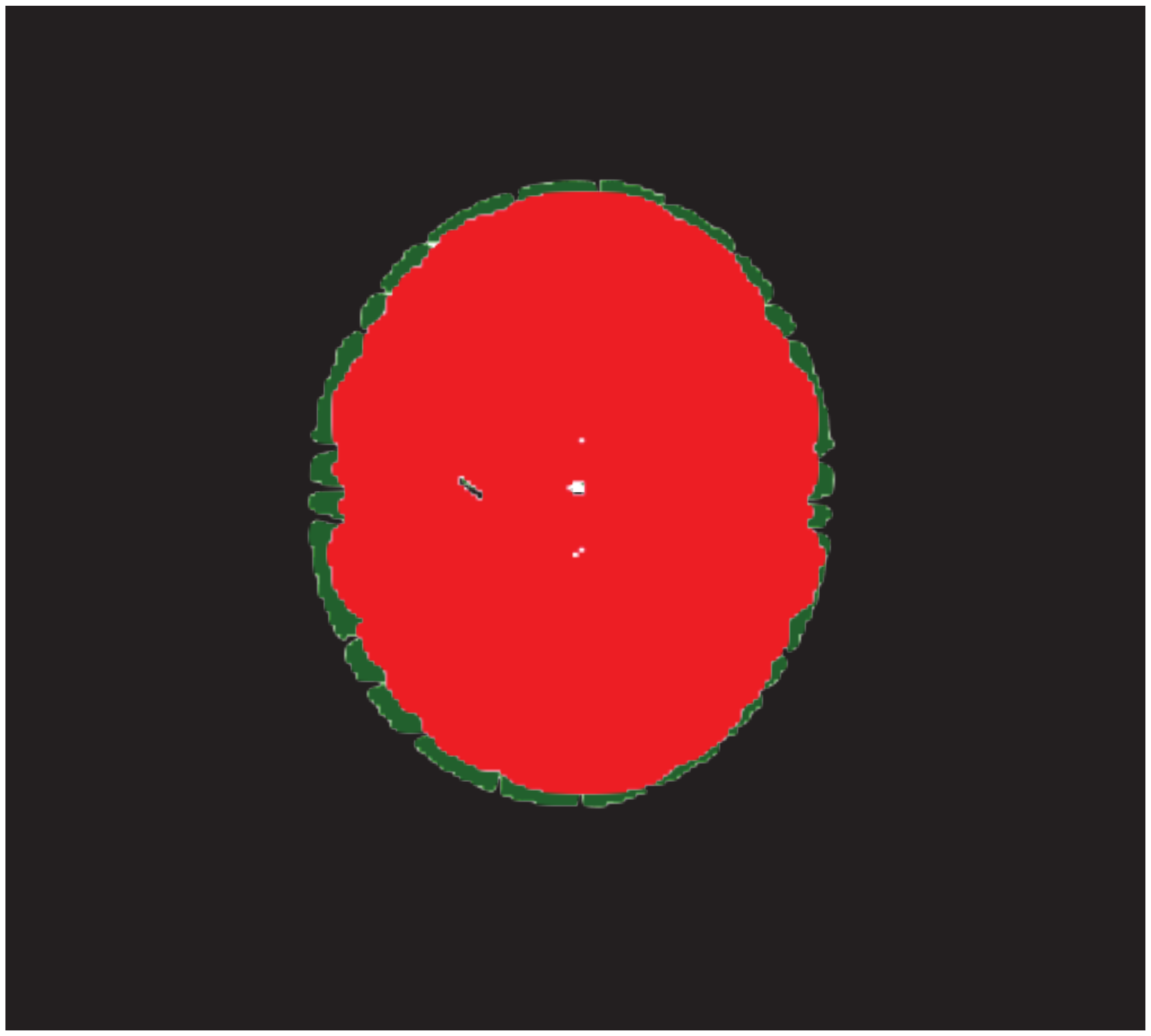}}
\subfigure[$t=2$.]{
\includegraphics[width=0.30\textwidth,clip==]
{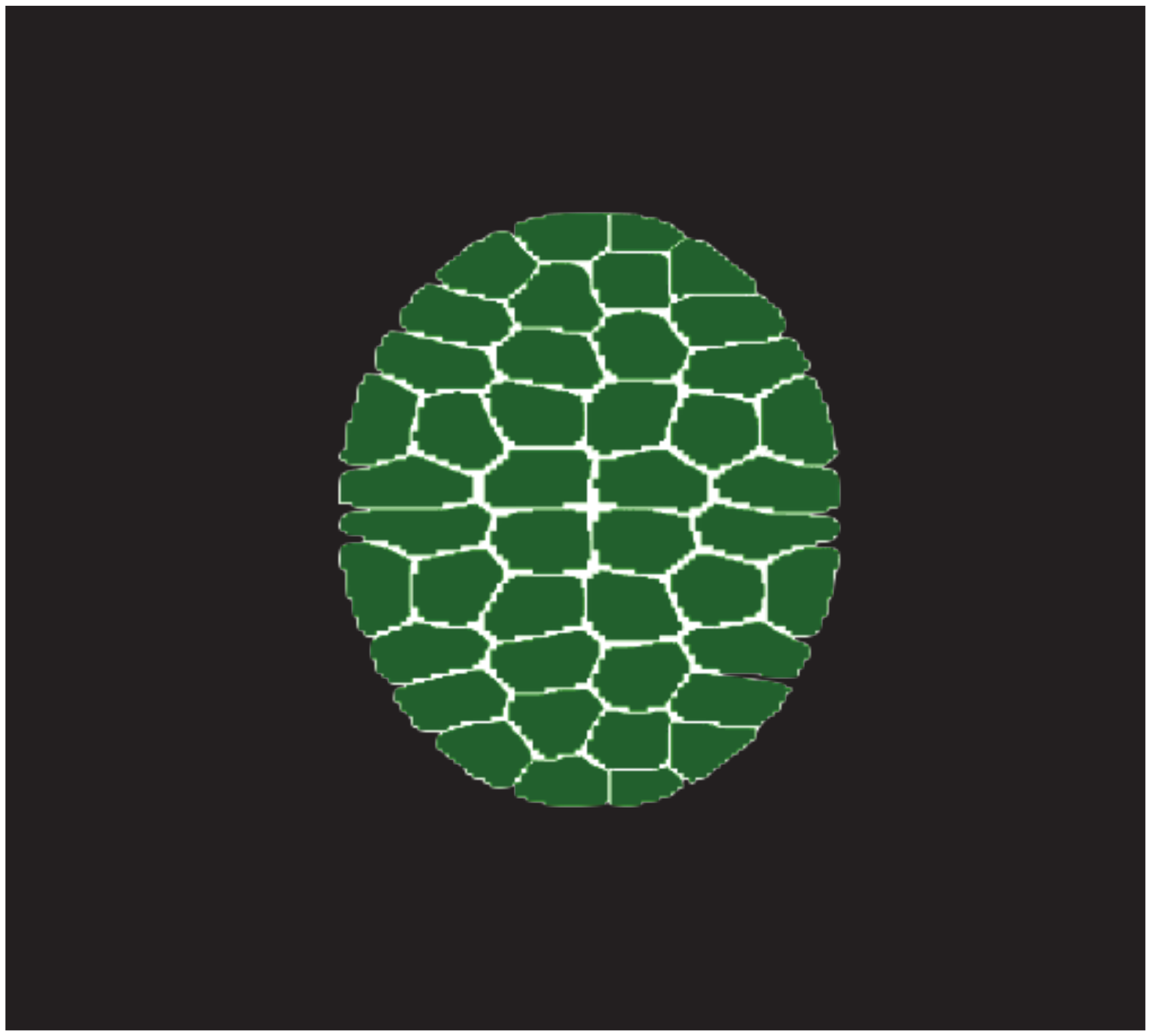}}\hskip 0cm
\caption{Decreased  nucleus with parameters $(\beta_0,\beta_\phi,\beta_\psi)=(\frac{5}{3},1,\frac 23)$ and $\gamma=0$ where $m=46$. Interface width $\eps_\phi^2=0.01$ and $\eps^2_\psi=0.01$.}\label{nuclear46_2}
\end{figure}

\section{Concluding remarks}
 Specific features of nuclear architecture  are closely related to  the functional organization of the nucleus. Within nucleus, 
 chromatin consists of two forms, heterochromatin and euchromatin. The conventional nuclear architecture is observed 
 when heterochromatin is enriched at nuclear periphery, and it represents the primary structure in the majority of eukaryotic cells, 
 including the rod cells of diurnal mammals. In contrast to this, the inverted nuclear architecture is observed when the heterochromatin is 
 distributed at the center of the nucleus, which occurs in the rod cells of nocturnal mammals. 
 The conventional architecture can transform into  the inverted architecture during nuclear reorganization process.

We developed in this paper a new phase field model  with Lagrange multipliers to simulate the nuclear architecture reorganization process. Introducing Lagrange multipliers enables us to preserve the specific physical and geometrical constraints for the biological events.
We developed several efficient time discretization schemes for the constrained gradient system. One is  a  full linear scheme which can only  preserve volume constrains with second order accuracy, but it is very easy to solve. The other two are weakly nonlinear scheme which can exactly preserve non-local constraints, and one of them is also  unconditionally energy stable. The price we pay for the exact preservation of geometric constraints is that we need to solve a nonlinear algebraic system for the Lagrange multipliers, which can be solved at negligible cost but may require the time step to be sufficiently small. These time discretization schemes can be used with any consistent Galerkin type discretization in space.

We presented several simulations using our proposed schemes for drosophila and human beings with $8$ chromosomes and $46$ chromosomes. Our results  indicate that the increase of  heterochromatin   conversion rate and the absence of affinity between nuclear envelope and heterochromatin are sufficient for the formation of the inverted architecture during the nuclear architecture reorganization process, while nuclear size and shape are not indispensable for the formation of the single hetero-cluster inverted architecture.

\bibliography{references}
\end{document}